\DeclareMathOperator{\prob}{\mathbb{P}}
\DeclareMathOperator{\argmax}{argmax}
\newcommand{\calS}{{\mathcal{S}}}
\newcommand{\calA}{{\mathcal{A}}}
\newcommand{\calD}{{\mathcal{D}}}
\newcommand{{\calY}}{{\mathcal{Y}}}
\newcommand{\calH}{{\mathcal{H}}}
\newcommand{\calC}{{\mathcal{C}}}
\newcommand{\calF}{{\mathcal{F}}}
\newcommand{\R}{\mathbb{R}}
\newcommand{\E}{\mathbb{E}}
\newcommand{\dd}{{\rm d}}
\newcommand{\zw}[1]{\textcolor{red}{\textsf{ZW: #1}}}
\newcommand{\balpha}{\pmb{\alpha}}
\newcommand{\bc}{\pmb{c}}
\newcommand{\ctrldX}[1][]{\ifthenelse{\isempty{#1}}{X^{\balpha,\bc}}{X^{#1}}}
\newcommand{\infgen}[1][]{\ifthenelse{\isempty{#1}}{\mathcal{L}^{\alpha,c}}{\mathcal{L}^{#1}}}
\DeclareMathOperator*{\SumInt}{%
\mathchoice%
  {\ooalign{$\displaystyle\sum$\cr\hidewidth$\displaystyle\int$\hidewidth\cr}}
  {\ooalign{\raisebox{.14\height}{\scalebox{.7}{$\textstyle\sum$}}\cr\hidewidth$\textstyle\int$\hidewidth\cr}}
  {\ooalign{\raisebox{.2\height}{\scalebox{.6}{$\scriptstyle\sum$}}\cr$\scriptstyle\int$\cr}}
  {\ooalign{\raisebox{.2\height}{\scalebox{.6}{$\scriptstyle\sum$}}\cr$\scriptstyle\int$\cr}}
}
\newtheorem{theorem}{Theorem}
\newtheorem{remark}{Remark}
\newtheorem{lemma}{Lemma}
\newtheorem{proposition}{Proposition}
\newtheorem{definition}{Definition}
\newtheorem{example}{Example}
\title{
Inference of  Utilities and Time Preference in Sequential Decision-Making
}
\date{\today}
\author{
Haoyang Cao
\thanks{ Department of Applied Mathematics and Statistics, Johns Hopkins University. \textbf{Email:} hycao@jhu.edu.  H. C. is partially supported by the departmental startup fund.}
\and
Zhengqi Wu \thanks{Epstein Department of Industrial and Systems Engineering, University of Southern California. \textbf{Email:} \{zhengqiw,renyuanx\}@usc.edu. R. X. is partially supported by the NSF CAREER award DMS-2339240 and a JP Morgan Faculty Research Award.}
\and
Renyuan Xu $^\dagger$}
\begin{document}
\maketitle

\begin{abstract}
This paper introduces a novel stochastic control framework to enhance the capabilities of automated investment managers, or robo-advisors, by accurately inferring clients' investment preferences from past activities. Our approach leverages a continuous-time model that incorporates utility functions and a generic discounting scheme of a time-varying rate, tailored to each client's risk tolerance, valuation of daily consumption, and significant life goals.  We address the resulting time inconsistency issue through state augmentation and the establishment of the dynamic programming principle and the verification theorem. Additionally, we provide sufficient conditions for the identifiability of client investment preferences. To complement our theoretical developments, we propose a learning algorithm based on maximum likelihood estimation within a discrete-time Markov Decision Process framework, augmented with entropy regularization. We prove that the log-likelihood function is locally concave, facilitating the fast convergence of our proposed algorithm. Practical effectiveness and efficiency are showcased through two numerical examples, including Merton's problem and an investment problem with unhedgeable risks.
 
 Our proposed framework not only advances financial technology by improving personalized investment advice but also contributes broadly to other fields such as healthcare, economics, and artificial intelligence, where understanding individual preferences is crucial.
\end{abstract}

\section{Introduction}

Automated investment managers, commonly known as robo-advisors, have emerged as a modern alternative to traditional financial advisors in recent years \cite{capponi2022personalized,d2019promises,rossi2020benefits}. The effectiveness and viability of robo-advisors depend significantly on their ability to provide customized financial guidance tailored to the unique needs of each client. To provide impactful personalized advice, two critical steps must be undertaken:  first, accurately estimate the client’s investment preferences, and second, formulate investment recommendations that align with these preferences. This paper focuses on the first step, involving a detailed analysis of the client's investment preferences. 

More often than not, it is difficult for the automated investment manager to have full access to clients' investment preferences. Therefore, it is worth exploring whether it is possible to infer relevant information by observing the clients' past investment activities. On the other hand, inferring a client's investment preferences is typically challenging, as it involves several complex aspects that vary from individual to individual. For example, clients may have short-term or long-term investment objectives \cite{jin2008behavioral}. Additionally, they might exhibit varying utility functions \cite{musiela2006investments,zariphopoulou2001solution}, reflecting distinct risk tolerance related to profit-and-loss (PnL) outcomes and valuation of daily consumption. Furthermore, individuals often demonstrate diverse time preferences in terms of the trade-off between immediate and deferred outcomes \cite{bjork2010general}. Finally, clients may have specific life goals \cite{capponi2024continuous}, such as saving for their children's education or building a retirement nest egg, rather than focusing solely on generating the highest possible portfolio return or beating the market.

The inference of {preferences} in sequential decision-making is a critical component not only for financial investments but also in other fields, leveraging insights into individual behaviors to optimize decisions and predict outcomes. In economics, utility functions are inferred to model consumer behavior, guiding businesses in product development and pricing strategies \cite{derbaix1985consumer,shin2021targeted}.  Healthcare professionals use inferred utility functions to evaluate patient preferences regarding different treatment options, which is essential for effective healthcare management and policy-making \cite{chewning2012patient,richesson2010patient}. Additionally, in artificial intelligence, particularly in areas like reinforcement learning (RL) and game theory, inferring utility functions helps in designing algorithms that can predict and mimic human decision-making processes, enhancing the interaction between humans and machines \cite{boularias2011relative,christiano2017deep}.

\paragraph{Our framework, results, and contributions.} 
We propose a novel stochastic control framework in continuous time that incorporates all the aforementioned investment preferences. This framework includes two utility functions that allow the client to define their risk tolerance related to the PnL outcomes and their valuation of daily consumption. Additionally, it allows for a generic discounting scheme under a time-varying rate, enabling the clients to balance immediate and deferred outcomes. This time-varying discounting scheme further incorporates specific life goals by assigning greater importance to times of significant expenditures, such as college tuition fees for children. Lastly, we address control problems on both finite-time and infinite-time horizons to accommodate clients' preferred investment duration. The control problem is time inconsistent under the generic time-varying discounting scheme. We address this issue by state augmentation to account for the cumulative discount rate. We study the well-definedness of the augmented control framework by establishing the regularity of the value function, the dynamic programming principle (DPP), and the verification theorem (see Propositions \ref{prop:gen-dpp}, \ref{prop:gen-vrf}, \ref{prop:gen-viscosity} for finite-time horizon and Propositions \ref{prop:gen-dpp-inf}, \ref{prop:gen-vrf-inf}, \ref{prop:gen-viscosity-inf} for infinite-time horizon). In addition, we identify sufficient conditions for identifying both the utility functions and the discounting scheme by {\it solely} observing the optimal policies provided by the client (see Theorem \ref{thm:gen-idtf} for finite-time horizon and Theorem \ref{thm:gen-idtf-inf} for infinite-time horizon). 

To complement the above theoretical framework, we propose an inference procedure based on maximum likelihood estimation. To demonstrate the effectiveness of this procedure, along with tractable theoretical guarantees, we focus on a specific case within the discrete-time Markov Decision Process (MDP), featuring Shannon's entropy regularization over an infinite-time horizon. The discrete-time MDP is especially relevant in the context of statistical inference and machine learning. The entropy term encourages full exploration of the state-action space and simultaneously introduces smoothness into the analysis \cite{haarnoja2017reinforcement}. We employ a parametric framework where the client uses an exponential discounting scheme, parameterized by $\bar{\rho}$, and a utility function parameterized by $\bar{\theta}$. Both sets of parameters are unknown to the automated investment manager. Mathematically, we show that the true preference parameter $(\bar{\rho},\bar{\theta})$ is a stationary point of the log-likelihood function and the log-likelihood function is locally concave near $(\bar{\rho},\bar{\theta})$; see Proposition \ref{lemma:gradient_L} and Theorem \ref{lemma:landscape}. This landscape property facilitates the design of a gradient-based algorithm to update the inferred preference parameter. We demonstrate the promising performance of our algorithm through two examples--Merton's problem and an investment problem under unhedgeable risks.

Considering the wide-ranging applications and the versatility of our proposed framework, we use the term ``inference agent'' instead of ``automated investment manager'' to describe the individual who interacts with the clients and infers their preferences.


\paragraph{Related literature and comparisons to our results.} Our developments are associated with several lines of literature as follows.\\

\noindent \underline{Utility inference.}  
 Back in 1964, Kalman \cite{Kalman64} asked the question of whether it is possible to recover the quadratic cost by observing an optimal linear policy; a similar question was also considered by \citet{boyd1994linear}. In fact, economists have long been interested in such questions within the context of determining utility functions from observations { such as \citet{samuelson1948consumption} and \citet{rickter1966revealed}}. For instance, \citet{keeney_raiffa_1993} studied the proper rank of actions based on some deterministic evaluations under a static setting. \citet{sargent1978estimation} later extended this question into a dynamic setting where the actions were specified as labor demand and evaluations as wages. {\citet{dybvig1997recovery} paid special attention to the {\it recoverability} or {\it identifiability} of utility and showed that Von Neumann-Morgenstern preferences over terminal consumption can be inferred from wealth process of a discrete-time, binomial model or continuous-time Gaussian model.}
 
{\citet{cox2014utility} studied the utility inference problem for the optimal consumption and allocation of wealth in continuous time by observing the actions of the client. The authors observed that there are infinitely many utility functions generating a given consumption pattern when the dynamic is deterministic and the consumption and investment strategies have to satisfy a consistency condition in the stochastic setting. 
\citet{el2021recover} took a ``forward-looking'' perspective of the connection between the observable process \(\{\mathscr{X}_t(x)>0:\mathscr{X}_0(x)=x>0\}\) (i.e., the characteristic process) and the corresponding utility process \(\{U(t,z):z>0,U(0,z)=u(z)\}\) (i.e., the dynamic utility); see the concept forward utility proposed by \citet{Musiela2007}. Different than the backward-looking perspective where the connection between the observable and utility is governed by some Markovian decision-making rule, the authors interpreted such a connection as the martingale property of the process \(\{U(t,\mathscr{X}_t(x)\}\), since Markov property no longer existed under the forward-looking viewpoint. To fully explore the concavity of utilities, the authors introduced an adjoint process of \(\mathscr X\), \(\{Y_t(y):Y_0(y)=y\}\), representing the decreasing marginal utility \(\{U_z(t,z)\}\) so that \(\{Y_t(u_z(x))=U_z(t,\mathscr X_t(x))\}\). Given the initial utility \(u\), the observable process \(\mathscr X\) and its adjoint process \(Y\), the authors fully characterized the martingale dynamic utility and its dual form via the It\^o-Ventzel formula and showed that they are solutions to some Hamilton-Jacobi-Bellman-type stochastic partial differential equations; this set of analytical tools was introduced in \cite{EKM3013an} and \cite{el2019construction}. In \cite{el2024bi}, the authors also extended the result of \cite{el2021recover} to allow an exogenous default time \(\tau\). 
 }
 
 In recent years, utility inference has been integrated with machine learning to embrace the potential of the big data era (and the progress is summarized in the next paragraph). In addition, inference problems in sequential decision-making for modern applications are more complex than inferring solely the utility function. Other preferences such as time preferences and specific investment goals should also be included, leading to the main formulation of our paper.\\

\noindent {\underline{Theory of inverse optimal control.} Inverse optimal control aims at inferring the underlying reward function that motivates the observed behavior of a rational agent in a sequential decision-making framework; within the context of MDP, inverse optimal control is also known as inverse reinforcement learning (IRL). In this area,  \citet{ng2000algorithms} considered a particular setting that the true reward function is some linear combination of several action-free basis functions and that the true reward function maximally distinguishes the observed policy from the rest. They reformulated this question into a constrained linear programming problem eventually leading to a well-defined solution. In \cite{abbeel2004apprenticeship}, the reward was assumed to be a linear combination of several features that best distinguish the demonstrated policy from other policies. The key assumption in both works is that the true reward function should maximize the margin between observations and the other policies. It also played a central role in the model of the well-known GAIL (generative adversarial imitation learning) algorithm \cite{ho2016generative}. Other than the ``maximum margin'' setting, another commonly adopted setting in IRL is to assume that an observed randomized policy should maximize the causal entropy of an underlying regularized MDP. For instance, \citet{ziebart2008maximum} studied the maximum entropy IRL based on known features. They assumed that the reward is a linear function of such features. \citet{ziebart2010modeling}
extended this approach to a selected set of non-linear rewards; see also \cite{levine2011nonlinear} and \cite{boularias2011relative} for similar settings. \citet{wulfmeier2015maximum} followed this approach but with rewards represented by neural networks. \citet{finn2016connection} combined the idea of adversarial training and IRL. They trained a discriminator to recover the reward function. \citet{reddy2019sqil} proposed a soft Q imitation learning algorithm to imitate the expert's policy by learning her Q function. \citet{garg2021iq} proposed an algorithm to learn the soft Q function which implicitly represents both the reward function and the policy. \citet{zeng2022maximum} adopted the maximum likelihood estimator and showed that their algorithm converges to a stationary point under a finite-time guarantee.  Back to our preference inference problem, since it is to infer the utility functions and the time preferences of the client simultaneously, these existing IRL algorithms are {\it not} directly applicable. Such a {\it multi-facet} inference problem motivates our main algorithm. Furthermore, 
we are able to provide a loss landscape analysis that facilitates fast convergence of our proposed algorithm; see Proposition \ref{lemma:gradient_L} and Theorem \ref{lemma:landscape}.  \\

\noindent\underline{Identifiability issues in IRL.} In 1998, Russell \cite{russell1998learning} pointed out the ill-posedness of inverse optimal control or IRL problems under a generic setting. Both the ``maximum margin'' and the ``maximum entropy'' settings mentioned above are reasonable assumptions to ameliorate this ill-posedness. Nonetheless, without prior access to the underlying true reward function, it is difficult to verify either one of them. To guarantee identifiability in IRL, alternative and more verifiable conditions and assumptions are required. Under an entropy regularized MDP setting, \citet{cao2021identifiability} pointed out two possible remedies for the identifiability issue. One way is to provide additional observations of the same agent (i.e., keeping the underlying reward function the same) under different environments; see also a repeated IRL setting proposed in \cite{amin2016towards} and \cite{amin2017repeated}. It was shown in \cite{cao2021identifiability} that under proper technical conditions on the transition kernels, observations from two distinct environments would suffice. Another approach is to provide additional structural assumptions on the MDP environment or the family of candidate reward functions based on prior domain knowledge; see also the identification of an action-free reward in \cite{fu2017learning}. Both \citet{cao2021identifiability} and \citet{Kim2021} provided sufficient structural conditions for the MDP environment that guarantee identifiability. 

However, as pointed out by \citet{schlaginhaufen2023identifiability}, the identifiability may no longer hold without the entropy regularization. In addition, the majority of these previous studies rely on the {\it full disclosure of the MDP environment}, including the transition kernel, time horizon, and the rate of an {\it exponential discounting scheme}. Though \citet{dong2024towards} provided a mathematical formulation and an algorithm for the partial information setting, it remains to be explored whether identifiability of both the unknown MDP information and the true reward function is viable. In this paper, we establish such identifiability for our preference inference problem, which is also one of the major theoretical contributions; see Theorems \ref{thm:gen-idtf} and \ref{thm:gen-idtf-inf}. \\

\noindent \underline{Time inconsistency in stochastic control.} Unlike assuming an exponential discounting scheme for the client, a general discounting scheme may lead to a {\it time-inconsistent} policy. In economics, one of the earliest studies on the inconsistency in dynamic utility maximization is \cite{strotz1955myopia}, where the optimality of the problem derived today is different from that of tomorrow due to some non-exponential discounting mechanism. Later \citet{pollack1968consistent} proposed a game-theoretic consistent planning approach for the discrete-time problem, where the game is among decision makers at different time steps and the optimal decision path is considered to be the Nash equilibrium. There has been a line of works following this consistent planning approach under both discrete- and continuous-time settings; see, for instance, \cite{bjork2014theory,bjork2017time,ekeland2010golden,hu2012time,hu2017time,yong2012time}, and more recently, \cite{dai2023learning,hernandez2023me}. Apart from this game-theoretic approach, \citet{karnam2017dynamic} introduced the idea of ``dynamic utility'' to a family of time-inconsistent optimization problems over a {\it finite-time horizon}. By modeling the utility as the solution to a backward stochastic differential equation (BSDE), the DPP could be revived. {For an {\it infinite-time horizon} setting which is suitable to model a long-run investment planning problem though, this BSDE approach can no longer be applied. Hence we propose a different way to revive DPP; see Propositions \ref{prop:gen-dpp} and \ref{prop:gen-dpp-inf} in Section \ref{sec:continuous_time}.} \\

\noindent \underline{Robo-advising.} Robo-advising has emerged over the last two decades as an alternative to traditional human financial advising, addressing limitations such as the human advisors' limited knowledge and high service fees \cite{capponi2022personalized,d2021robo,d2019promises}. Here, we mainly review some papers that explore the machine learning and inference aspects of this subject. The first RL algorithm for a robo-advisor was proposed by \citet{alsabah2021robo}, where the authors designed an exploration-exploitation algorithm to learn a constant risk appetite parameter and then applied a follow-the-leader type of algorithm to invest. \citet{wang2021robo} introduced a framework consisting of two agents: the first, an inverse portfolio optimization agent, infers a risk preference parameter and the corresponding expected return; the second aggregates the learned information to formulate a new multi-period portfolio optimization problem solved by deep learning. To transcend the rather single-facet inference settings above, the theoretical framework and the numerical procedure in our paper are designed to capture the multiple investment needs of a client. 

\section{Continuous-time Framework}
\label{sec:continuous_time}
In this section, we study a continuous-time framework of the joint consumption-allocation problem of an investing client. Her wealth consists of a risk-free asset and a risky asset. What distinguishes this framework from the classical ones is that the client holds a {\it general preference of time}, that is, the discounting scheme is {\it not necessarily exponential}. This could possibly lead to time-inconsistent decision-making. First, for the {\it optimal control} problem, we analyze the time-inconsistent dynamical decision-making problem for such a client, assuming the client's utility functions of consumption and wealth as well as her time preference are fully disclosed. The optimal decision relies on reviving a suitable DPP under this framework. Then, for the {\it inverse optimal control} problem, we establish an identifiability result for both the utility functions and the time preference of the client, assuming instead her optimal joint consumption-allocation plan is disclosed. Such an identifiability result provides inspirations for the algorithm proposed in Section \ref{sec:discrete-time}. 

\subsection{Finite-time Horizon}\label{subsec:gen-finite}
 We first focus on a finite-time horizon setting, with a decision horizon $T$, to address scenarios where the client has a short-term investment plan.
\paragraph{Market dynamics and client's wealth.}  Let $(\Omega, \mathcal{F}, \mathbb{F} = (\mathcal{F}_t)_{t\ge 0},\mathbb{P})$ be a filtered probability space, supporting a one-dimensional $\mathbb{F}$-Brownian motion $W$. Assume there is a bond and a stock in the investment universe. The price of the bond follows
\begin{eqnarray}\label{eq:bond_price}
    {\rm d} S_t^0 = r {\rm d}  t,
\end{eqnarray}
and the price of the stock follows
\begin{eqnarray}\label{eq:asset_price}
    {\rm d }S_t = S_t(\mu {\rm d}t + \sigma {\rm d}W_t).
\end{eqnarray}
Assume the client choose an allocation process \(\balpha=\{\alpha_t\}_{t\in[0,T]}\)  and a consumption process \(\bc=\{c_t\}_{t\in[0,T]}\) with \(c_t\geq0\).  Namely, the client allocates $\alpha_t$ proportion of wealth to the stock and $1-\alpha_t$ proportion of wealth to the bond at time $t$. In addition,  the client is also making consumption $c_t$ to achieve certain satisfaction in life.

Fixing a sufficiently large constant \(M\in\R^+\) and introducing a compact space \(\mathcal{K}=[-M,M]\times[0,M]\), define
\begin{equation}
    \label{eq:adm-set}
    \begin{aligned}
    \calA\coloneqq&\left\{(\balpha,\bc)\{(\alpha_t,c_t)\}_{t\geq0}\,\biggl|\,(\alpha_t,c_t)\in\mathcal{K},\, (\alpha_t,c_t)\in\bar\calF_t\coloneqq\sigma\left(\sigma(\beta_s,0\leq s\leq t)\times\mathcal{F}^W_t\right),\right.\\
    &\hspace{100pt}{}\E\left[|\ctrldX_t|^2\right]<\infty,\,\forall t\geq0\biggl\}
    \end{aligned}
\end{equation}
as the admissible set of all possible joint consumption-allocation processes. Hence the wealth process follows:
\begin{eqnarray}
   d\ctrldX_t=\left\{\ctrldX_t\left[\alpha_t\mu+(1-\alpha_t)r\right]-c_t\right\}dt+\sigma\alpha_t\ctrldX_tdW_t.\label{eq:gen-wealth}
\end{eqnarray}

 \paragraph{Client's preference.}   
 In the finite-time horizon, the preference of the client can be characterized by a pair of utility functions and a discount scheme.
 More specifically, consider utility functions $U_1, U_2$ that belong to the following class
\begin{equation}
    \label{eq:gen-util}
    \begin{aligned}
    \mathcal U\coloneqq\biggl\{U:\R\to[-\infty,+\infty)\biggl|\,\,&U\text{ is strictly positive, increasing and concave on }(0,+\infty),\\
    &\text{there exists a sufficiently large constant }C\in\R^+\text{ such that}\\
    &|U(x)|\leq C\left(1+x^2\right)\text{ for all }x\in(0,\infty),
    \\&
    U\in\calC^2((0,+\infty)),\,U(x)=U(0)\text{ for }x\leq 0\biggl\}.
\end{aligned}
\end{equation}
Here $U_1$ quantifies the the client's evaluation regarding the consumption whereas $U_2$ quantifies her evaluation regarding the terminal wealth at the end of the investment plan.

\paragraph{General discounting scheme.} We are particularly interested in a client that is subject to a general discounting scheme \(\beta=\{\beta_t=\beta(t)\}_{t\geq0}\), where
\begin{itemize}
    \item \(\beta_t\in[0,1]\) for all \(t\in[0,T]\); and
    \item there exists \(\dot{\beta}:[0,\infty)\to\R\) such that \(\dot{\beta}\) is bounded and integrable on \([0,t]\) with \(\beta_t=\int_0^t\dot{\beta}_sds+\beta_0\) for any \(t>0\).
\end{itemize}
{Such a discounting scheme \(\{\beta_t\}_{t\geq0}\) reflects a generic {\it time preference} of the client. A time-varying discounting rate could account for different levels of appreciation for the immediate outcome and the delayed fulfillment. It could also provide the flexibility of assigning greater importance to times of significant expenditures, such as college tuition for children and down-payment of a house.} 

Then for any \((t,x,z)\in[0,T]\times\R\times[0,1]\), define the total reward as 
\begin{equation}
    \label{eq:acc-util}
    J(t,x,z,\balpha,\bc)\coloneqq\E\left[\int_t^T\beta_sU_1(c_s)ds+\beta_TU_2(X_T)\,\biggl|\,X_t=x,\beta_t=z\right]
\end{equation}
subject to the wealth process \eqref{eq:gen-wealth} and 
\begin{align}
    \dd \beta_t=\dot{\beta}_t \dd t.\label{eq:gen-discnt}
\end{align}
For any \((x,z)\in\R\times[0,1]\), define the value function as follows,
\begin{equation}
    \label{eq:gen-value}
V(t,x,z)=\sup_{(\balpha,\bc)\in\calA}J(t,x,z,\balpha,\bc),\,t\in[0,t);\ \ V(T,x,z)=zU_2(x).
\end{equation}
subject to \eqref{eq:gen-wealth} and \eqref{eq:gen-discnt}. 

In this section, we also aim to recover the DPP to the above time-inconsistent utility optimization problem \eqref{eq:gen-value}, where the time-inconsistency is particularly due to the general discounting scheme. We take a different approach than the BSDE characterization of dynamic utility in \cite{karnam2017dynamic}; instead, we extend the state space to incorporate the discounting process (similar to \cite{bauerle2014more}) and then re-establish DPP accordingly. 

\subsubsection{Preliminary Analysis}
First, we establish the well-definedness of the control problem \eqref{eq:gen-wealth}-\eqref{eq:gen-discnt} and introduce some analytical properties associated with it.

\begin{lemma}
    \label{lem:pos-wealth}
    Assume that \(U_1,U_2\in\mathcal{U}\). Moreover, assume that \(U_1(0)=0\) and \(U_2(0)=-\infty\). For any \((t,x,z)\in[0,T]\times(0,\infty)\times[0,1]\), if the policy $\balpha^*,\bc^*$ satisfies that \(J(t,x,z,\balpha^*,\bc^*)=V(t,x,z)\), then it holds almost surely that 
\begin{eqnarray}
    \ctrldX[\balpha^*,\bc^*]_s\in(0,\infty)\quad \text{for all}\quad s\in[t,T],
\end{eqnarray}
  where \(\ctrldX[\balpha^*,\bc^*]\) solves \eqref{eq:gen-wealth} on \([t,T]\) given \((\balpha,\bc)=(\balpha^*,\bc^*)\) and \(\ctrldX_t=x\).
\end{lemma}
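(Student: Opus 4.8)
The plan is to argue by contradiction, exploiting the terminal penalty $U_2(0)=-\infty$: any optimal trajectory that touches the nonpositive region would produce reward $-\infty$, which is impossible because the value is finite for $x>0$. My first move is to reduce the assertion that $X^*_s>0$ for all $s\in[t,T]$ to the single terminal requirement $X^*_T>0$, where $X^*=\ctrldX[\balpha^*,\bc^*]$. Writing $a_s=\alpha^*_s\mu+(1-\alpha^*_s)r$ and $b_s=\sigma\alpha^*_s$ (both bounded since $(\alpha^*_s,c^*_s)\in\mathcal K$), equation \eqref{eq:gen-wealth} is a linear SDE in $X$, whose variation-of-constants solution on $[t,T]$ is $X^*_s=\Phi_s\big(x-\int_t^s\Phi_u^{-1}c^*_u\,\dd u\big)$ with $\Phi_s=\exp\big(\int_t^s(a_u-\tfrac12 b_u^2)\,\dd u+\int_t^s b_u\,\dd W_u\big)>0$ the stochastic exponential. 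Since $\Phi_u>0$ and $c^*_u\ge 0$, the integral $I_s:=\int_t^s\Phi_u^{-1}c^*_u\,\dd u$ is nondecreasing, so $\{s\in[t,T]:X^*_s\le 0\}=\{s\in[t,T]:I_s\ge x\}$, if nonempty, is a final segment $[\tau,T]$. In particular the trajectory meets $(-\infty,0]$ before time $T$ if and only if $X^*_T\le 0$, so it suffices to prove $X^*_T>0$ almost surely.

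Suppose then, for contradiction, that $\prob(X^*_T\le 0)>0$, on which event $U_2(X^*_T)=U_2(0)=-\infty$. I would show $J(t,x,z,\balpha^*,\bc^*)=-\infty$ by handling the two terms of \eqref{eq:acc-util} separately. The running term is harmless: since $c^*_s\in[0,M]$ and $U_1$ satisfies the quadratic growth bound of \eqref{eq:gen-util}, $U_1(c^*_s)$ is uniformly bounded, and with $\beta_s\in[0,1]$ on a finite horizon the expectation of $\int_t^T\beta_sU_1(c^*_s)\,\dd s$ is finite. For the terminal term, the positive part obeys $(\beta_TU_2(X^*_T))^+\le U_2(X^*_T)^+\le C\big(1+(X^*_T)^2\big)$, integrable by the admissibility moment bound $\E[|X^*_T|^2]<\infty$, whereas its negative part equals $+\infty$ on a set of positive probability (here I use $\beta_T>0$). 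Thus $\E[\beta_TU_2(X^*_T)]=-\infty$ and $J(t,x,z,\balpha^*,\bc^*)=-\infty$.

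To close the contradiction I would note that the value is finite whenever $x>0$: the admissible choice $\balpha\equiv 0$, $\bc\equiv 0$ keeps wealth strictly positive, equal to $xe^{r(s-t)}$, and yields the finite reward $\beta_TU_2\big(xe^{r(T-t)}\big)\ge 0$, so $V(t,x,z)>-\infty$. This contradicts $J(t,x,z,\balpha^*,\bc^*)=V(t,x,z)$, forcing $\prob(X^*_T\le 0)=0$, i.e.\ $X^*_T>0$ almost surely; combined with the final-segment reduction this yields $X^*_s\in(0,\infty)$ for all $s\in[t,T]$ almost surely.

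The step I expect to be the main obstacle is the absorption reduction in the first paragraph, since it is what makes interior positivity follow from the terminal penalty alone; it hinges on the explicit linear representation and the strict positivity of the stochastic exponential $\Phi$, both of which rely on the boundedness of the admissible controls. A secondary subtlety is guaranteeing that the terminal expectation is genuinely $-\infty$ and not an indeterminate $\infty-\infty$, which is precisely why I isolate the integrable positive part and invoke $\beta_T>0$.
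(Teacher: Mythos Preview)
Your proof is correct and follows essentially the same approach as the paper: both argue that once the wealth process becomes nonpositive it remains so until $T$, so that the terminal penalty $U_2(0)=-\infty$ forces $J=-\infty$, contradicting the finiteness of $V$ obtained from a benign control. The paper derives the absorption property by comparing with the zero-consumption process $X^{\balpha,\pmb{0}}$ started from the same point (a comparison argument), whereas you obtain it directly from the variation-of-constants representation $X^*_s=\Phi_s(x-I_s)$ with $I_s$ nondecreasing; these are equivalent. Your treatment is in fact more careful than the paper's on the integrability side (isolating the positive part of the terminal term via the quadratic growth bound, and flagging the use of $\beta_T>0$), which is a genuine improvement in rigor.
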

\begin{proof}
    For any \((\balpha,\bc)\in\calA\), we have \(\ctrldX_s\leq \ctrldX[\balpha,\pmb{0}]_s\) for all \(s\in[t,T]\) almost surely. Notice that \(\ctrldX[\balpha,\pmb{0}]_{t'}=\ctrldX[\balpha,\pmb{0}]_t\exp\left\{\int_t^{t'}\alpha_l(\mu-r)+r-\frac{\sigma^2\alpha_l^2}{2}dl+\int_t^{t'}\sigma\alpha_ldW_l\right\}\) for \({t'}\geq t\). If \(\ctrldX_{t'}\leq0\), then \[\ctrldX_T\leq\ctrldX[\balpha,\pmb{0}]_T\leq0,\] and hence \(J(t,x,z,\balpha,\bc)=-\infty\). On the other hand,
    \[J(t,x,z,\balpha,\pmb{0})=\beta_T\E{U_2\left(x\exp\left\{\int_t^{T}\alpha_l(\mu-r)+r-\frac{\sigma^2\alpha_l^2}{2}dl+\int_t^{T}\sigma\alpha_ldW_l\right\}\right)}>0.\]
    Then if \(J(t,x,z,\balpha^*,\bc^*)=V(t,x,z)\), then \(\ctrldX[\balpha^*,\bc^*]_s\in(0,\infty)\) for all \(s\in[t,T]\) almost surely.
\end{proof}

\begin{lemma}
    \label{lem:gen-v-1}
    Assume that \(U_1,U_2\in\mathcal{U}\). Moreover, assume that \(U_1(0)=0\) and \(U_2(0)=-\infty\). Then it holds that the value function \(V:[0,T]\times\R\times[0,1]\to[-\infty,+\infty)\) defined in \eqref{eq:gen-value} is strictly concave and strictly increasing in \(x\in(0,\infty)\) given any \((t,z)\in[0,T]\times(0,1]\).
\end{lemma}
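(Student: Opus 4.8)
The plan is to transport the concavity and monotonicity of $U_1,U_2$ through the multiplicatively linear wealth dynamics \eqref{eq:gen-wealth} by a pathwise coupling of wealth trajectories. Throughout, I restrict the supremum in \eqref{eq:gen-value} to controls that keep the wealth strictly positive: by Lemma~\ref{lem:pos-wealth} together with $U_2(0)=-\infty$, any admissible $(\balpha,\bc)$ that lets $\ctrldX_s\le 0$ on a set of positive probability yields $J=-\infty$, whereas the bond-only, zero-consumption control keeps wealth positive and gives a finite value (using $U_1(0)=0$ and the quadratic growth bound in \eqref{eq:gen-util}); hence the value is unchanged if computed over positive-wealth controls, which I assume henceforth.

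\emph{Concavity.} Fix $(t,z)$, take $x_1,x_2\in(0,\infty)$, $\lambda\in(0,1)$, and set $x=\lambda x_1+(1-\lambda)x_2$. For $\varepsilon>0$ choose $\varepsilon$-optimal positive-wealth controls $(\balpha^i,\bc^i)$ for $(t,x_i,z)$ with wealth processes $\ctrldX[\balpha^i,\bc^i]$, $i=1,2$. I form the mixed wealth $X_s:=\lambda\,\ctrldX[\balpha^1,\bc^1]_s+(1-\lambda)\,\ctrldX[\balpha^2,\bc^2]_s$ and the candidate control
\[
  c_s:=\lambda c^1_s+(1-\lambda)c^2_s,\qquad
  \alpha_s:=\frac{\lambda\alpha^1_s\,\ctrldX[\balpha^1,\bc^1]_s+(1-\lambda)\alpha^2_s\,\ctrldX[\balpha^2,\bc^2]_s}{X_s}.
\]
Since both wealth processes are strictly positive, $\alpha_s$ is a genuine convex combination of $\alpha^1_s,\alpha^2_s$ and $c_s$ of $c^1_s,c^2_s$, so $(\alpha_s,c_s)\in\mathcal K$ and $(\balpha,\bc)\in\calA$ (adaptedness and square-integrability being inherited). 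A direct computation shows $X$ solves \eqref{eq:gen-wealth} under $(\balpha,\bc)$ with $X_t=x$: the diffusion term matches by the definition of $\alpha_s$, and the drift matches because $\lambda(1-\alpha^1_s)\ctrldX[\balpha^1,\bc^1]_s+(1-\lambda)(1-\alpha^2_s)\ctrldX[\balpha^2,\bc^2]_s=(1-\alpha_s)X_s$. Using concavity of $U_2$ at the (positive) terminal wealths, concavity of $U_1$ (valid on $[0,\infty)$ since $U_1\ge 0=U_1(0)$), and $\beta_s,\beta_T\ge 0$, linearity of the expectation yields $J(t,x,z,\balpha,\bc)\ge\lambda J(t,x_1,z,\balpha^1,\bc^1)+(1-\lambda)J(t,x_2,z,\balpha^2,\bc^2)\ge\lambda V(t,x_1,z)+(1-\lambda)V(t,x_2,z)-\varepsilon$, and $\varepsilon\downarrow 0$ gives concavity.

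\emph{Strict monotonicity.} For $0<x_1<x_2$ I would use a ``bond trick'': starting from $x_2$, follow a near-optimal control for $x_1$ while investing the surplus $x_2-x_1$ in the risk-free asset. Writing $\tilde X_s:=\ctrldX[\balpha^1,\bc^1]_s+(x_2-x_1)e^{r(s-t)}$, matching the diffusion term forces $\tilde\alpha_s:=\alpha^1_s\,\ctrldX[\balpha^1,\bc^1]_s/\tilde X_s\in[-M,M]$ (the rescaling factor lies in $(0,1]$ because the surplus is nonnegative) and leaves consumption $\tilde c_s:=c^1_s$ unchanged, so $(\tilde\balpha,\tilde\bc)\in\calA$ and $\tilde X_T=\ctrldX[\balpha^1,\bc^1]_T+(x_2-x_1)e^{r(T-t)}$ strictly dominates $\ctrldX[\balpha^1,\bc^1]_T>0$ pathwise. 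If $\beta_T>0$, strict monotonicity of $U_2$ gives $V(t,x_2,z)\ge J(t,x_2,z,\tilde\balpha,\tilde\bc)>J(t,x_1,z,\balpha^1,\bc^1)$; if $\beta_T=0$, I instead route the surplus into extra consumption over a short interval $[t,t+\delta]$ on which $\beta_s>0$ (such $\delta$ exists because $\beta_t=z>0$ and $\beta$ is continuous), invoking strict monotonicity of $U_1$. Sending the optimality gap to zero yields $V(t,x_1,z)<V(t,x_2,z)$.

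\emph{Strict concavity.} This is where I expect the main difficulty. The coupling above in fact gives $V(t,x,z)\ge\lambda V(t,x_1,z)+(1-\lambda)V(t,x_2,z)+\Delta$, where $\Delta\ge 0$ is the Jensen slack $\E\big[\beta_T\big(U_2(X_T)-\lambda U_2(\ctrldX[\balpha^1,\bc^1]_T)-(1-\lambda)U_2(\ctrldX[\balpha^2,\bc^2]_T)\big)\big]$ plus the analogous consumption term. The obstacle is to certify that $\Delta$ is bounded below by a strictly positive quantity that does not wash out as the optimality gap shrinks, which needs two ingredients: (i) the relevant discount weight is strictly positive—either $\beta_T>0$, or, when $\beta_T=0$, $\beta_s>0$ on $[t,t+\delta]$ near $t$ as above; and (ii) the optimal responses to distinct initial wealth genuinely differ where that weight is positive, i.e.\ for $x_1\ne x_2$ the terminal wealths (resp.\ the consumption profiles) are not almost surely equal. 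I would secure (ii) by a dichotomy: if both the consumption processes and the entire wealth paths coincided a.s.\ for the two problems, then evaluating \eqref{eq:gen-wealth} at $s=t$ would force $x_1=x_2$; hence they differ on a positive-measure set, and after localizing to where the discount is positive and using that more initial wealth is optimally deployed into strictly larger consumption/terminal wealth (as in the bond trick), the strictly concave $U_1$ or $U_2$ produces a strictly positive $\Delta$, whence strict concavity of $V$. Making this ``differ where the weight is positive'' step quantitative and uniform (ideally via attainment of the supremum) is the delicate part.
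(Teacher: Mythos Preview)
Your approach differs from the paper's in the concavity coupling, and yours is the sound one. The paper forms the plain convex combination of controls $(\balpha^u,\bc^u)=\lambda(\balpha^x,\bc^x)+(1-\lambda)(\balpha^y,\bc^y)$ and asserts that the resulting wealth equals $\lambda\ctrldX[\balpha^x,\bc^x]+(1-\lambda)\ctrldX[\balpha^y,\bc^y]$; but \eqref{eq:gen-wealth} is bilinear in $(\alpha,X)$ through the terms $\alpha_tX_t$, and the mismatch $-\lambda(1-\lambda)(\alpha^x_s-\alpha^y_s)(X^x_s-X^y_s)$ does not vanish in general, so that identity fails. Your wealth-weighted allocation $\alpha_s=(\lambda\alpha^1_sX^1_s+(1-\lambda)\alpha^2_sX^2_s)/X_s$ is precisely the construction that makes the mixed wealth solve \eqref{eq:gen-wealth}, and is the standard device for proportional-investment models. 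For monotonicity the paper's route is shorter than your bond trick: it keeps the \emph{same} control for both initial wealths and observes that the difference $Y=\widehat{\ctrldX}-\ctrldX$ solves the consumption-free linear SDE $dY_s=Y_s[\alpha_s(\mu-r)+r]\,ds+\sigma\alpha_s Y_s\,dW_s$, whence $Y_T=\Delta x\cdot\exp\{\cdots\}>0$ a.s.; you could simplify by adopting this (the paper does not treat the case $\beta_T=0$, which you do).

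On strict concavity, neither argument is complete. The class $\mathcal U$ only requires concave (not strictly concave) utilities, and even granting strict concavity of $U_1,U_2$ one must still show that the coupled terminal wealths or consumptions differ with positive probability on a set where the discount weight is positive; the paper passes directly to a strict inequality without this, and your dichotomy sketch, while pointing in the right direction, does not produce a uniform lower bound on the Jensen slack along an $\varepsilon$-maximizing sequence. That is the genuine remaining gap in both write-ups.
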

\begin{proof}
Fix any \((t,x,z)\in[0,T]\times(0,\infty)\times(0,1]\). 
\begin{enumerate}
    \item {\it Strictly concave and positive}. Take \(y\in(0,\infty)\setminus\{x\}\) and \(\lambda\in(0,1)\). Define \(u=\lambda x+(1-\lambda)y\) and \(u\in(0,\infty)\). Take any \((\balpha^x,\bc^x),(\balpha^y,\bc^y)\in\calA\) and define \((\balpha^u,\bc^u)=\lambda(\balpha^x,\bc^x)+(1-\lambda)(\balpha^y,\bc^y)\). Then it immediately follows that \((\balpha^u,\bc^u)\in \calA\). Let \(\ctrldX[\balpha^u,\bc^u]\) (resp. \(\ctrldX[\balpha^x,\bc^x]\) or \(\ctrldX[\balpha^y,\bc^y]\)) be the solution to the SDE \eqref{eq:gen-wealth} over \([t,T]\) given \((\balpha,\bc)=(\balpha^u,\bc^u)\) (resp. \((\balpha,\bc)=(\balpha^x,\bc^x)\) or \((\balpha,\bc)=(\balpha^y,\bc^y)\)) and \(\ctrldX_t=u\) (resp. \(\ctrldX_t=x\) or \(\ctrldX_t=y\)). Then we have \[\ctrldX[\balpha^u,\bc^u]_s = \lambda\ctrldX[\balpha^x,\bc^x]_s+(1-\lambda)\ctrldX[\balpha^y,\bc^y]_s,\ \ s\in[t,T].\] 
    By Lemma \ref{lem:pos-wealth}, we can assume that both \(\ctrldX[\balpha^x,\bc^x]_s\) and \(\ctrldX[\balpha^y,\bc^y]_s\) are strictly positive for \(s\in[t,T]\) almost surely. Since \(U_1,U_2\in\mathcal{U}\), then 
    \[J(t,u,z,\balpha^u,\bc^u)>\lambda J(t,x,z,\balpha^x,\bc^x)+(1-\lambda)J(t,y,z,\balpha^y,\bc^y)>0.\]
    Taking the supremum over both \((\balpha^x,\bc^x)\) and \((\balpha^y,\bc^y)\), 
    \[V(t,u,z)>\lambda V(t,x,z)+(1-\lambda)V(t,y,z)>0.\]
    \item {\it Strictly increasing}. Fix any \(\Delta x>0\) take any \((\balpha,\bc)\in\calA\) such that \(\ctrldX_s>0\) for \(s\in[t,T]\) almost surely. Let \(\widehat{\ctrldX}\) be the solution to \eqref{eq:gen-wealth} given \(\ctrldX_t=x+\Delta x\). Then,
    \[\Delta_T :=\widehat{\ctrldX_T}-\ctrldX_T=\Delta x\exp\left\{\int_t^{T}\alpha_l(\mu-r)+r-\frac{\sigma^2\alpha_l^2}{2}dl+\int_t^{T}\sigma\alpha_ldW_l\right\}>0\text{ a.s.},\]
    and therefore
    \[J(t,x+\Delta x,z,\balpha,\bc)-J(t,x,z,\balpha,\bc)=\beta_T\E\left[U_2\left(\widehat{\ctrldX_T}\right)-U_2(\ctrldX_T)\right]>0.\]
    Hence, \(V(t,x+\Delta x,z)>V(t,x,z)\).
\end{enumerate}
\end{proof}

Having established some preliminary properties of the value function, we first show a necessary condition for the value function \eqref{eq:gen-value}. 
\begin{proposition}[Dynamic programming principle (DPP)]\label{prop:gen-dpp}
Take the same assumptions on \(U_1,U_2\) as in Lemma \ref{lem:pos-wealth}. For any \((t,x,z)\in[0,T)\times\R\times[0,1]\) and \(\tau\in\mathbb T_t\) where \(\mathbb T_t\) denotes all \(\{\bar\calF_t\}_{t\geq0}\)-adapted stopping times \(\tau\) such that \(\tau\in[t,T]\) a.s., the value function \(V\) defined in \eqref{eq:gen-value} satisfies
\begin{equation}
    \label{eq:gen-dpp}
    \tag{DPP}
    V(t,x,z)=\sup_{(\balpha,\bc)\in\calA}\E\left[\int_t^{\tau}\beta_sU_1(c_s)ds+V(\tau, \ctrldX_{\tau},\beta_{\tau})\biggl|\ctrldX_t=x,\beta_t=z\right],
\end{equation}
with \(V(T,x,z)=zU_2(x)\).
\end{proposition}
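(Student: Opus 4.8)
The plan is to prove the two inequalities in \eqref{eq:gen-dpp} separately, exploiting the fact that the state augmentation carrying $z=\beta_t$ as an additional coordinate renders the controlled dynamics \eqref{eq:gen-wealth}--\eqref{eq:gen-discnt} Markovian, so that the non-exponential discounting no longer destroys the additive structure of the reward \eqref{eq:acc-util}. Throughout I write $(X_s,\beta_s)_{s\ge t}$ for the augmented state started from $(x,z)$ at time $t$ under a control $(\balpha,\bc)\in\calA$, and I use that $\beta_s = z+\int_t^s\dot\beta_u\,du$ is deterministic given $(t,z)$, so that $(X_\tau,\beta_\tau)$ is $\bar\calF_\tau$-measurable for every $\tau\in\mathbb{T}_t$. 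Since the hypotheses are those of Lemma~\ref{lem:pos-wealth}, I will restrict attention to the positive-wealth regime it guarantees, which is needed to keep the continuation values away from $-\infty$ coming from $U_2(0)=-\infty$.

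First I would establish the inequality $\le$. Fix $(t,x,z)$ and $\tau\in\mathbb{T}_t$ and take an arbitrary $(\balpha,\bc)\in\calA$. Splitting the integral in \eqref{eq:acc-util} at $\tau$ and applying the tower property with respect to $\bar\calF_\tau$ gives
\[
J(t,x,z,\balpha,\bc) = \E\!\left[\int_t^\tau \beta_s U_1(c_s)\,ds + \E\!\left[\int_\tau^T \beta_s U_1(c_s)\,ds + \beta_T U_2(X_T)\,\Big|\,\bar\calF_\tau\right]\right].
\]
By the flow property of \eqref{eq:gen-wealth}--\eqref{eq:gen-discnt}, the inner conditional expectation is the performance of an admissible control for the sub-problem started at $(\tau,X_\tau,\beta_\tau)$, hence is bounded above by $V(\tau,X_\tau,\beta_\tau)$ by the definition \eqref{eq:gen-value}. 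Taking the supremum over $(\balpha,\bc)\in\calA$ yields the $\le$ direction.

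Next I would establish the inequality $\ge$ by an $\varepsilon$-optimal concatenation argument. Fix $\varepsilon>0$, choose $(\balpha^1,\bc^1)\in\calA$ whose value under the right-hand side of \eqref{eq:gen-dpp} is within $\varepsilon$ of the supremum, and for each triple $(s,y,w)$ pick $(\balpha^{s,y,w},\bc^{s,y,w})\in\calA$ with $J(s,y,w,\balpha^{s,y,w},\bc^{s,y,w})\ge V(s,y,w)-\varepsilon$, chosen to depend measurably on $(s,y,w)$ via a measurable-selection theorem. I then glue these: follow $(\balpha^1,\bc^1)$ on $[t,\tau)$ and follow $(\balpha^{\tau,X_\tau,\beta_\tau},\bc^{\tau,X_\tau,\beta_\tau})$ on $[\tau,T]$. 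After checking that the glued process is admissible (adaptedness to $\bar\calF$, the compact constraint $(\alpha,c)\in\mathcal{K}$, and the $L^2$-bound are all preserved under gluing at a stopping time), the decomposition of the previous step run in reverse, together with the strong Markov property of $(X,\beta)$ at $\tau$, shows its total reward is at least $\E[\int_t^\tau\beta_sU_1(c^1_s)\,ds + V(\tau,X_\tau,\beta_\tau)]-\varepsilon \ge (\text{RHS of }\eqref{eq:gen-dpp})-2\varepsilon$. Letting $\varepsilon\downarrow 0$ completes this direction and hence the proof.

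The main obstacle is this last step: producing a jointly measurable family of $\varepsilon$-optimal controls and verifying that the concatenated process genuinely lies in $\calA$. The selection must be compatible with the filtration $\bar\calF_t=\sigma(\sigma(\beta_s,0\le s\le t)\times\mathcal{F}^W_t)$ and with the strong Markov property at $\tau$; moreover, because $U_2(0)=-\infty$, care is required when conditioning so as not to generate $-\infty$ continuation values. I would resolve this by working on the full-measure event $\{X_\tau>0\}$ furnished by Lemma~\ref{lem:pos-wealth} and invoking a standard Jankov--von Neumann (or regular-conditional-distribution) selection argument, as is customary for continuous-time control DPPs.
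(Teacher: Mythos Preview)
Your proposal is correct and follows essentially the same route as the paper: split $J$ at the stopping time $\tau$ and use the tower property for the $\le$ direction, then concatenate an arbitrary control on $[t,\tau)$ with an $\varepsilon$-optimal continuation for the $\ge$ direction. The paper's write-up is terser and does not explicitly invoke a measurable-selection theorem or Lemma~\ref{lem:pos-wealth} for the concatenation step, but the underlying argument is the same as yours.
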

\begin{proof}
    Fix any \((t,x,z)\in[0,T)\times\R\times[0,1]\), \((\balpha,\bc)\in\calA\) and \(\tau\in\mathbb T_t\). We have 
    \begin{align*}
    J(t,x,z,\balpha,\bc)&= \E\left[\int_t^{\tau}\beta_sU_1(c_s)ds+\int_{\tau}^{T}\beta_sU_1(c_s)ds+\beta_TU_2(\ctrldX_T)\biggl|\ctrldX_t=x,\beta_t=z\right]\\
    &=\E\left[\int_t^{\tau}\beta_sU_1(c_s)ds\biggl|\ctrldX_t=x,\beta_t=z\right]\\
    &\hspace{20pt}+\E\left[\E\left[\int_{\tau}^{T}\beta_sU_1(c_s)ds+\beta_TU_2(\ctrldX_T)\biggl|\ctrldX_{\tau},\beta_{\tau}\right]\biggl|\ctrldX_t=x,\beta_t=z\right]\\
    &=\E\left[\int_t^{\tau}\beta_sU_1(c_s)ds\biggl|\ctrldX_t=x,\beta_t=z\right]\\
    &\hspace{20pt}+\E\left[\E\left[J(\tau,X,Z,\balpha,\bc)\biggl|\tau, X=\ctrldX_{\tau},Z=\beta_{\tau}\right]\biggl|\ctrldX_t=x,\beta_t=z\right].
\end{align*}
By the definition given by \eqref{eq:gen-value}, for any \(\epsilon>0\) and \(\Delta t\in[0,T-t]\), there exists \((\balpha^{\epsilon,t+\Delta t,x,z},\bc^{\epsilon,t+\Delta t,x,z})\in\calA\) such that
\begin{equation}\label{eq:gen-upper}
    J(t+\Delta t,x,z,\balpha^{\epsilon,t+\Delta t,x,z},\bc^{\epsilon,t+\Delta t,x,z})>\sup_{(\balpha,\bc)\in\calA}J(t+\Delta t,x,z,\balpha,\bc)-\epsilon=V(t+\Delta t, x,z)-\epsilon,
\end{equation}
and
\begin{equation}\label{eq:gen-lower}
    J(t+\Delta t, x,z,\balpha,\bc)\leq J(t+\Delta t,x,z,\balpha^{\epsilon,t+\Delta t,x,z},\bc^{\epsilon,t+\Delta t,x,z})\leq V(t+\Delta t, x,z).
\end{equation}
Then consider \((\bar\balpha,\bar\bc)=\{(\bar\alpha_s,\bar c_s)\}_{s\in[t,T]}\) such that
\begin{equation*}
    (\bar\alpha_s,\bar{c}_s)=(\alpha_s,c_s)\mathbbm{1}\{\tau>s\}+ \left(\alpha^{\epsilon,\tau,\ctrldX_{\tau},\beta_{\tau}}_s,c^{\epsilon,\tau,\ctrldX_{\tau},\beta_{\tau}}_s\right)\mathbbm{1}\{\tau\leq s\},
\end{equation*}
where 
\begin{equation*}
    \beta_{\tau}=z+\int_t^{\tau}\dot{\beta}_sds,\ \
    \ctrldX_{\tau}=x+\int_t^{\tau}d\ctrldX_s,
\end{equation*}
according to \eqref{eq:gen-wealth} and \eqref{eq:gen-discnt}. Notice that \((\bar\balpha,\bar\bc)\in\calA\). By \eqref{eq:gen-upper} and \eqref{eq:gen-lower}, for any \((\balpha,\bc)\in\calA\), we have
\[\begin{aligned}
V(t,x,z)&\geq J(t,x,z,\bar\balpha,\bar\bc)>\E\left[\int_t^{\tau}\beta_sU_1(c_s)ds\biggl|\ctrldX_t=x,\beta_t=z\right]\\
&\hspace{50pt}+\E\left[V(\tau, \ctrldX_{\tau},\beta_{\tau})\biggl|\ctrldX_t=x,\beta_t=z\right]-\epsilon
\end{aligned}\]
for any \(\epsilon>0\), and
\[J(t,x,z,\balpha,\bc)\leq \E\left[\int_t^{\tau}\beta_sU_1(c_s)ds+V(\tau, \ctrldX_{\tau},\beta_{\tau})\biggl|\ctrldX_t=x,\beta_t=z\right].\]
It follows that for any \((t,x,z)\in[0,T)\times\R\times[0,1]\) and \(\tau\in\mathbb T_t\),
\begin{equation*}
    V(t,x,z)=\sup_{(\balpha,\bc)\in\calA}\E\left[\int_t^{\tau}\beta_sU_1(c_s)ds+V(\tau, \ctrldX_{\tau},\beta_{\tau})\biggl|\ctrldX_t=x,\beta_t=z\right].
\end{equation*}
\end{proof}
For any \(\alpha\in\R\) and \(c\in\R^+\), define the following operator
\[\infgen \phi(t,x,z)=\left\{\left[\alpha(\mu-r)x\right]-c\right\}\partial_x\phi(t,x,z)+\frac{\sigma^2\alpha^2}{2}x^2\partial_x^2\phi(t,x,z),\]
for any test function \(\phi\in\calC^{\infty}_b([0,T)\times\R\times\R^+)\bigcap\calC_b^0([0,T)\times\R\times\R^+)\). Following the DPP under a generic discounting scheme \eqref{eq:gen-dpp}, we have the following Hamilton-Jacobi-Bellman (HJB) equation
\begin{equation}
    \label{eq:gen-hjb}
    \tag{HJB}\left\{
    \begin{aligned}
    &\partial_tV(t,x,z)+\dot{\beta}_t\partial_zV(t,x,z)+rx\partial_xV(t,x,z)+\sup_{(\alpha,c)\in\mathcal K}\left\{zU_1(c)+\infgen V(t,x,z)\right\}=0,\ \ t\in[0,T);\\
    &V(T,x,z)=zU_2(x).
    \end{aligned}\right.
\end{equation}

The next result provides sufficient conditions for the value function in \eqref{eq:gen-value} regarding classical solutions to \eqref{eq:gen-hjb}. 


\begin{proposition}
    \label{prop:gen-vrf}
    Take the same assumptions on \(U_1,U_2\) as in Lemma \ref{lem:pos-wealth}. Let \(w:[0,T]\times\R\times[0,1]\to\R\) be a function such that 
    \[w\in\calC^{1,2,1}\left([0,T]\times\R\times[0,1]\right)\bigcap\calC^0\left([0,T]\times\R\times[0,1]\right),\]
    and there exists a constant \(C>0\) with
    \[w(t,x,z)\leq C(1+|x|^2),\ \ \forall (t,x,z)\in[0,T]\times\R\times[0,1].\]
    \begin{enumerate}
        \item Assume that for any \((\alpha,c)\in\mathcal K\), 
        \[
        \left\{
        \begin{aligned}
            &\partial_tw(t,x,z)+\dot{\beta}_t\partial_zw(t,x,z)+rx\partial_xw(t,x,z)+zU_1(c)+\infgen w(t,x,z)\leq0,\\
            &\hspace{180pt}\forall (t,x,z)\in[0,T)\times\R\times[0,1];\\
            &w(T,x,z)\geq zU_2(x), \ \  \forall (x,z)\in\R\times[0,1].
        \end{aligned}\right.
        \]
        Then \(w\geq V\) on \([0,T]\times\R\times[0,1]\).
        \item Assume further that there exists \(\hat{\alpha}:[0,T]\times\R\times[0,1]\to[-M,M]\) and \(\hat{c}:[0,T]\times\R\times[0,1]\to[0,M]\) such that 
        \[
        \left\{
        \begin{aligned}
            &\partial_tw(t,x,z)+\dot{\beta}_t\partial_zw(t,x,z)+rx\partial_xw(t,x,z)+zU_1(\hat c(t,x,z))+\infgen[\hat{\alpha}(t,x,z),\hat{c}(t,x,z)]w(t,x,z)=0,\\
            &\hspace{180pt}\forall (t,x,z)\in[0,T)\times\R\times[0,1];\\
            &w(T,x,z)=zU_2(x), \ \  \forall (x,z)\in\R\times[0,1],
        \end{aligned}\right.
        \]
        also, with \(\beta_t=\beta_0+\int_0^t\dot{\beta}_sds\in[0,1]\) for all \(t\in[0,T]\), the following SDE,
        \[dX_t=
        \left\{X_t\left[\hat{\alpha}(t,X_t,\beta_t)(\mu-r)+r\right]-\hat{c}(t,X_t,\beta_t)\right\}dt
        +\sigma\hat{\alpha}(t,X_t,\beta_t)X_tdW_t, 
        \]
        admits a unique solution \(\ctrldX[\hat{\balpha},\hat{\bc}]\) given \(X_0=x\) for any \(x\in\R\), and 
        \[\left(\hat{\balpha}=\left\{\hat{\alpha}_t\right\}_{t\in[0,T]}=\left\{\hat{\alpha}(t,\ctrldX[\hat{\balpha},\hat{\bc}]_t,\beta_t)\right\}_{t\in[0,T]}, \hat{\bc}=\left\{\hat{c}_t\right\}_{t\in[0,T]}=\left\{\hat{c}(t,\ctrldX[\hat{\balpha},\hat{\bc}]_t,\beta_t)\right\}_{t\in[0,T]}\right)\in\calA.\]
        Then \(w= V\) on \([0,T]\times\R\times[0,1]\), with \((\hat{\balpha},\hat{\bc})\) being an optimal joint allocation-consumption process.
    \end{enumerate}
\end{proposition}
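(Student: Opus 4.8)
The plan is to run the classical verification argument built on It\^o's formula, localization, and the state-augmented \eqref{eq:gen-hjb}, exploiting the fact that the second-order generator of the pair $(\ctrldX_s,\beta_s)$ under a control $(\balpha,\bc)$ is precisely $\partial_t+\dot{\beta}_s\partial_z+rx\partial_x+\infgen[\alpha_s,c_s]$, which is exactly the operator appearing in \eqref{eq:gen-hjb}. For Part 1, I would fix $(t,x,z)$ and an arbitrary $(\balpha,\bc)\in\calA$; since the claim $w\ge J$ is trivial whenever $J(t,x,z,\balpha,\bc)=-\infty$, I restrict to finite-reward controls, for which (by the argument of Lemma \ref{lem:pos-wealth}) the wealth path satisfies $\ctrldX_s>0$ almost surely. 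Applying It\^o to $w(s,\ctrldX_s,\beta_s)$ on $[t,T]$ gives
\[
w(T,\ctrldX_T,\beta_T)=w(t,x,z)+\int_t^T\Big[\partial_tw+\dot{\beta}_s\partial_zw+r\ctrldX_s\partial_xw+\infgen[\alpha_s,c_s]w\Big](s,\ctrldX_s,\beta_s)\,ds+\int_t^T\sigma\alpha_s\ctrldX_s\partial_xw(s,\ctrldX_s,\beta_s)\,dW_s.
\]
Because $w$ satisfies the differential inequality for every $(\alpha,c)\in\mathcal K$, the $ds$-integrand is pointwise $\le-\beta_sU_1(c_s)$, so after rearranging and using the terminal inequality $w(T,\ctrldX_T,\beta_T)\ge\beta_TU_2(\ctrldX_T)$ I obtain
\[
w(t,x,z)\ge\beta_TU_2(\ctrldX_T)+\int_t^T\beta_sU_1(c_s)\,ds-\int_t^T\sigma\alpha_s\ctrldX_s\partial_xw(s,\ctrldX_s,\beta_s)\,dW_s.
\]

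Next I would localize to control the stochastic integral. Define $\tau_n=\inf\{s\ge t:|\ctrldX_s|\ge n\text{ or }\int_t^s(\sigma\alpha_u\ctrldX_u\partial_xw)^2\,du\ge n\}\wedge T$; on $[t,\tau_n]$ the $dW$-term is a true martingale, so taking expectations after replacing $T$ by $\tau_n$ kills it and yields
\[
w(t,x,z)\ge\E\Big[\int_t^{\tau_n}\beta_sU_1(c_s)\,ds+w(\tau_n,\ctrldX_{\tau_n},\beta_{\tau_n})\Big].
\]
I then send $n\to\infty$. The running term is handled by monotone convergence, since $U_1$ is increasing with $U_1(0)=0$ and $c_s\ge0$ force $\beta_sU_1(c_s)\ge0$. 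For the terminal term I use continuity of $w$, the fact that $\tau_n\uparrow T$ (no explosion, as the coefficients are bounded on the compact control set $\mathcal K$), the quadratic upper bound $w(t,x,z)\le C(1+|x|^2)$, and the moment estimate $\E[\sup_{s\le T}|\ctrldX_s|^2]<\infty$ (again from boundedness of the coefficients together with the $L^2$-admissibility built into $\calA$) to pass to the limit and conclude $w(t,x,z)\ge J(t,x,z,\balpha,\bc)$. Taking the supremum over admissible controls gives $w\ge V$.

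Part 2 reuses the same machinery along the closed-loop state $\ctrldX[\hat{\balpha},\hat{\bc}]$ generated by $(\hat{\alpha},\hat{c})$: now the HJB holds with equality at $(\hat\alpha,\hat c)$, so the $ds$-integrand equals exactly $-\beta_sU_1(\hat c_s)$, and the terminal condition is $w(T,\cdot)=zU_2(\cdot)$. The identical localization and limit argument then produces the equality $w(t,x,z)=J(t,x,z,\hat{\balpha},\hat{\bc})\le V(t,x,z)$; combined with Part 1 this forces $w=V$ on $[0,T]\times\R\times[0,1]$ and identifies $(\hat{\balpha},\hat{\bc})$ as optimal. Here positivity of the closed-loop wealth (Lemma \ref{lem:pos-wealth}) keeps $U_2(\ctrldX[\hat{\balpha},\hat{\bc}]_T)$ finite, so the terminal convergence is clean.

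I expect the main obstacle to be the passage to the limit in the terminal term, precisely because the hypothesis supplies only the \emph{one-sided} growth bound $w\le C(1+|x|^2)$ while $U_2(0)=-\infty$ rules out a matching lower bound on $w$; a naive dominated-convergence or reverse-Fatou argument controls only the positive part. The way around this is exactly the combination used above: discard the $J=-\infty$ controls up front, restrict the remaining analysis to strictly positive wealth paths via Lemma \ref{lem:pos-wealth} (so that $U_2$ is evaluated in its finite, quadratically-bounded region), and then marry Fatou/uniform-integrability with the $\sup$-moment estimates for $\ctrldX$ afforded by the compactness of $\mathcal K$. The remaining ingredients — verifying the It\^o expansion, matching the generator to \eqref{eq:gen-hjb}, and vanishing of the localized martingale — are routine.
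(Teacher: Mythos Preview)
Your proposal is correct and follows essentially the same route as the paper: apply It\^o's formula to $w$ along the controlled pair $(\ctrldX,\beta)$, localize with stopping times so that the stochastic integral is a true martingale, use the differential (in)equality from \eqref{eq:gen-hjb} pointwise on the $ds$-integrand, and pass to the limit with the quadratic growth bound and the $L^2$-moment control built into $\calA$. The paper invokes dominated convergence directly for the terminal term, whereas you split the argument into monotone convergence for the nonnegative running reward and Fatou/UI for the terminal piece after discarding $J=-\infty$ controls via Lemma~\ref{lem:pos-wealth}; your version is slightly more explicit about the one-sided growth hypothesis, but the substance is identical.
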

\begin{proof}
The assumptions on \(U_1\) and \(U_2\) guarantee a quadratic growth rate in \(x\). Consider arbitrary \((t,x,z)\in[0,T)\times\R\times[0,1]\) and \((\balpha,\bc)\in\calA\). 
    \begin{enumerate}
        \item Define
        \[\tau_n\coloneqq\inf\left\{s\geq t\biggl|\int_t^s|\partial_xw(u,\ctrldX_u,\beta_u)|^2du\geq n\right\},\ \ \forall n\in\mathbb N^+.\]
        Then we have \(\lim_{n\uparrow\infty}\tau_n\overset{a.s.}{=}\infty\) and the stopped process \(\left\{\int_t^{s\wedge\tau_n}\partial_xw(u,\ctrldX_u,\beta_u)dW_u\right\}_{s\in[t,T]}\) is a martingale for all \(n\in\mathbb N^+\). The for any \(s\in[t,T]\), by It\^{o}'s formula, we have
        \begin{align*}
            w(s\wedge\tau_n,\ctrldX_{s\wedge\tau_n},\beta_{s\wedge\tau_m})&=w(t,x,z)+\int_t^{s\wedge\tau_n}\left\{\partial_tw(u,\ctrldX_u,\beta_u)+\dot{\beta}_u\partial_zw(u,\ctrldX_u,\beta_u)\right.\\
            &.{}+r\ctrldX_u\partial_xw(u,\ctrldX_u,\beta_u)+\infgen[\alpha_u,c_u] w(u,\ctrldX_u,\beta_u)\Big\}\,du\\
            &+\int_t^{s\wedge\tau_n}\partial_xw(u,\ctrldX_u,\beta_u)dW_u.
        \end{align*}
        Therefore, taking expectations on both sides we have
        \begin{align*}
            &\E\left[w(s\wedge\tau_n,\ctrldX_{s\wedge\tau_n},\beta_{s\wedge\tau_n})\biggl|\ctrldX_t=x,\beta_t=z\right]=w(t,x,z)+\mathbb E\left[\int_t^{s\wedge\tau_n}\partial_tw(u,\ctrldX_u,\beta_u)\right.\\
            &\hspace{20pt}\left.{}+\dot{\beta}_u\partial_zw(u,\ctrldX_u,\beta_u)+r\ctrldX_u\partial_xw(u,\ctrldX_u,\beta_u)+\infgen[\alpha_u,c_u] w(u,\ctrldX_u,\beta_u)du\biggl|\ctrldX_t=x,\beta_t=z\right]\\
            &\hspace{20pt}\leq w(t,x,z)-\E\left[\int_t^{s\wedge\tau_n}\beta_uU_1(c_u)du\biggl|\ctrldX_t=x,\beta_t=z\right],
        \end{align*}
        where the well-posedness of \(\E\left[\int_t^{s\wedge\tau_n}\beta_uU_1(c_u)du\biggl|\ctrldX_t=x,\beta_t=z\right]\) is guaranteed by the quadratic growth rate condition on \(U_1\) and the fact that \((\balpha,\bc)\in\calA\). The quadratic growth rate assumption on \(w\) together with \((\balpha,\bc)\in\calA\) allows us to apply dominated convergence theorem and get 
        \[\begin{aligned}
        &\E\left[\beta_TU_2(\ctrldX_T)\biggl|\ctrldX_t=x,\beta_t=z\right]\leq\E\left[w(T,\ctrldX_T,\beta_T)\biggl|\ctrldX_t=x,\beta_t=z\right]\\
        \leq&w(t,x,z)-\E\left[\int_t^T\beta_uU_1(c_u)du\biggl|\ctrldX_t=x,\beta_t=z\right]\\
        \implies&w(t,x,z)\geq\E\left[\int_t^T\beta_uU_1(c_u)du+\beta_TU_2(\ctrldX_T)\biggl|\ctrldX_t=x,\beta_t=z\right]=J(t,x,z,\balpha,\bc).\end{aligned}\]
        Hence, \(w(t,x,z)\geq V(t,x,z)\) by taking the supreme of \((\balpha,\bc)\) over \(\calA\). 
        \item Applying a similar localization-and-It\^{o} argument as in the previous part, we have that for any \(s\in[t,T]\)
        \begin{align*}
            &\E\left[w(s,\ctrldX[\hat{\balpha},\hat{\bc}]_{s},\beta_{s})\biggl|X_t=x,\beta_t=z\right]=w(t,x,z)+\mathbb E\left[\int_t^{s}\partial_tw(u,\ctrldX[\hat{\balpha},\hat{\bc}]_u,\beta_u)\right.\\
            &\hspace{20pt}\left.{}+\dot{\beta}_u\partial_zw(u,\ctrldX[\hat{\balpha},\hat{\bc}]_u,\beta_u)+r\ctrldX[\hat{\balpha},\hat{\bc}]_u\partial_xw(u,\ctrldX[\hat{\balpha},\hat{\bc}]_u,\beta_u)+\infgen[\hat{\alpha}_u,\hat c_u] w(u,\ctrldX[\hat{\balpha},\hat{\bc}]_u,\beta_u)du\biggl|\ctrldX[\hat{\balpha},\hat{\bc}]_t=x,\beta_t=z\right]\\
            &\hspace{20pt}= w(t,x,z)-\E\left[\int_t^{s}\beta_uU_1(\hat{c}_u)du\biggl|\ctrldX[\hat{\balpha},\hat{\bc}]_t=x,\beta_t=z\right].
        \end{align*}
        In particular, when \(s=T\),
        \[\begin{aligned}
        &\E\left[\beta_TU_2(\ctrldX[\hat{\balpha},\hat{\bc}]_T)\biggl|\ctrldX[\hat{\balpha},\hat{\bc}]_t=x,\beta_t=z\right]=\E\left[w(T,\ctrldX[\hat{\balpha},\hat{\bc}]_T,\beta_T)\biggl|\ctrldX[\hat{\balpha},\hat{\bc}]_t=x,\beta_t=z\right]\\
        =&w(t,x,z)-\E\left[\int_t^T\beta_uU_1(\hat{c}_u)du\biggl|\ctrldX[\hat{\balpha},\hat{\bc}]_t=x,\beta_t=z\right]\\
        \implies&w(t,x,z)=\E\left[\int_t^T\beta_uU_1(\hat{c}_u)du+\beta_TU_2(\ctrldX[\hat{\balpha},\hat{\bc}]_T)\biggl|\ctrldX[\hat{\balpha},\hat{\bc}]_t=x,\beta_t=z\right]=J(t,x,z,\hat{\balpha},\hat{\bc}).\end{aligned}\]
        Then we have \(w(t,x,z)=J(t,x,z,\hat{\balpha},\hat{\bc})\leq V(t,x,z)\). Combined with the result from the previous part, we have \(w=V\) on \([0,T]\times\R\times\R^+\), with \((\hat{\balpha},\hat{\bc})\in\calA\) being a corresponding optimal joint allocation-consumption process.
    \end{enumerate}
\end{proof}
Without assuming the existence of a classical solution to \eqref{eq:gen-hjb}, we could instead consider its viscosity solution.
\begin{definition}[Viscosity solution]\label{def:visc-soln}
Denote \(\calD=[0,T)\times(0,\infty)\times[0,1]\).
\begin{enumerate}
    \item A lower semi-continuous function \(\underline{v}:\calD\to\R\) is a viscosity subsolution to \eqref{eq:gen-hjb} if for any \((t_0,x_0,z_0)\in\calD\) and any \(\phi\in\calC^{1,2,1}(\calD)\) such that
    \[\min_{(t,x,z)\in B(t_0,x_0,z_0)}(\phi-\underline{v})(t,x,z)=(\phi-\underline{v})(t_0,x_0,z_0)=0\]
    for some neighborhood \(B(t_0,x_0,z_0)\subset\calD\),
    \begin{equation}
        \label{eq:gen-subsoln}
        -\partial_t\phi(t_0,x_0,z_0)-\dot{\beta}_t\partial_z\phi(t_0,x_0,z_0)-rx_0\partial_x\phi(t_0,x_0,z_0)-\sup_{(\alpha,c)\in\mathcal K}\Big\{z_0U_1(c)+\mathcal{L}^{\alpha,c}\phi(t_0,x_0,z_0)\Big\}\leq0.
    \end{equation}
    \item An upper semi-continuous function \(\overline{v}:\calD\to\R\) is a viscosity supersolution to \eqref{eq:gen-hjb} if for any \((t_0,x_0,z_0)\in\calD\) and any \(\psi\in\calC^{1,2,1}(\calD)\) such that 
    \[\max_{(t,x,z)\in B(t_0,x_0,z_0)}(\psi-\underline{v})(t,x,z)=(\phi-\underline{v})(t_0,x_0,z_0)=0\]
    for some neighborhood \(B(t_0,x_0,z_0)\subset\calD\),
    \begin{equation}
        \label{eq:gen-supersoln}
        -\partial_t\psi(t_0,x_0,z_0)-\dot{\beta}_t\partial_z\psi(t_0,x_0,z_0)-rx_0\partial_x\psi(t_0,x_0,z_0)-\sup_{(\alpha,c)\in\mathcal K}\Big\{z_0U_1(c)+\mathcal{L}^{\alpha,c}\psi(t_0,x_0,z_0)\Big\}\geq0.
    \end{equation}
    \item A continuous function \(v:\calD\to\R\) is a viscosity solution to \eqref{eq:gen-hjb} if it is both a viscosity subsolution and a viscosity supersolution to \eqref{eq:gen-hjb}.
\end{enumerate}
\end{definition}

\begin{proposition}
    \label{prop:gen-viscosity}
    Take the same assumptions on \(U_1,U_2\) as in Lemma \ref{lem:pos-wealth}. The value function \(V\) in \eqref{eq:gen-value} is the unique viscosity solution to \eqref{eq:gen-hjb} over the any  \(\bar{\mathcal D}=[0,T]\times \mathcal D_1\times \mathcal D_2\subset\calD\) with \(\mathcal D_i\) compact, \(i=1,2\).
\end{proposition}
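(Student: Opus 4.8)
The plan is to establish the two defining properties separately: first that $V$ is a viscosity solution of \eqref{eq:gen-hjb} on $\calD$, and then that it is the unique such solution on the compact subdomain $\bar{\calD}$ via a comparison principle. A preliminary step is to record continuity of $V$ on $\calD$, since Definition \ref{def:visc-soln} presupposes it: Lemma \ref{lem:gen-v-1} already yields strict concavity and strict monotonicity in $x$ on $(0,\infty)$ (hence continuity in $x$), and continuity in $(t,z)$ follows from boundedness of $\dot\beta$, the quadratic-growth bounds on $U_1,U_2$, and standard moment estimates for the wealth SDE \eqref{eq:gen-wealth}.

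For the viscosity-solution property the engine is the dynamic programming principle \eqref{eq:gen-dpp}. To verify the subsolution inequality \eqref{eq:gen-subsoln}, I would fix $(t_0,x_0,z_0)\in\calD$ and a test function $\phi\in\calC^{1,2,1}$ touching $V$ from above there, and exploit the ``$\ge$'' half of the DPP supremum: for each horizon $h>0$ there is an $\epsilon$-optimal control with $V(t_0,x_0,z_0)\le\E[\int_{t_0}^{t_0+h}\beta_sU_1(c_s)\,ds+V(t_0+h,\ctrldX_{t_0+h},\beta_{t_0+h})]+\epsilon h$. Replacing $V(t_0+h,\cdot)$ by the larger $\phi(t_0+h,\cdot)$, applying It\^o's formula to $\phi$ along the controlled trajectory (with the localization/stopping argument already used in Proposition \ref{prop:gen-vrf}), dividing by $h$ and sending $h\downarrow0$ then $\epsilon\downarrow0$, continuity of the integrand at $(t_0,x_0,z_0)$ yields $\sup_{(\alpha,c)\in\mathcal K}\{\partial_t\phi+\dot\beta_{t_0}\partial_z\phi+rx_0\partial_x\phi+z_0U_1(c)+\infgen\phi\}(t_0,x_0,z_0)\ge0$, which is exactly \eqref{eq:gen-subsoln}. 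The supersolution inequality \eqref{eq:gen-supersoln} is symmetric but simpler: with $\psi$ touching $V$ from below and using the DPP with an arbitrary \emph{constant} control $(\alpha,c)\in\mathcal K$ over $[t_0,t_0+h]$, the same It\^o-and-limit computation gives $\partial_t\psi+\dot\beta_{t_0}\partial_z\psi+rx_0\partial_x\psi+z_0U_1(c)+\infgen[\alpha,c]\psi\le0$ at $(t_0,x_0,z_0)$ for every $(\alpha,c)$, and taking the supremum recovers \eqref{eq:gen-supersoln}.

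Uniqueness is the heart of the matter and will come from a comparison principle on $\bar{\calD}$: any subsolution $\underline v$ and supersolution $\bar v$ ordered on the parabolic boundary (the terminal slice $\{T\}\times\mathcal D_1\times\mathcal D_2$, where both equal $zU_2(x)$, together with the lateral boundary $[0,T]\times\partial(\mathcal D_1\times\mathcal D_2)$) satisfy $\underline v\le\bar v$ throughout, so two solutions sharing these boundary data must coincide. I would run the Crandall--Ishii--Lions doubling-of-variables scheme: assuming $\sup_{\bar{\calD}}(\underline v-\bar v)>0$, maximize $\underline v(t,x,z)-\bar v(s,y,w)-\tfrac1{2\epsilon}(|x-y|^2+|z-w|^2)-\tfrac1{2\delta}|t-s|^2$ (adding, if needed, a vanishing penalization such as $\kappa/(T-t)$, or passing to $e^{-\lambda(T-t)}v$, to manufacture the zeroth-order monotonicity the equation otherwise lacks); compactness of $\bar{\calD}$ gives a maximizer whose penalized differences vanish and whose limit lies in the interior, where the positive gap persists. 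The parabolic Crandall--Ishii lemma then furnishes matching second-order jets obeying the usual matrix inequality, and inserting them into the sub/supersolution inequalities and subtracting produces a contradiction.

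The main obstacle is precisely verifying the structural conditions that make this subtraction collapse to a contradiction, and three features require care. First, the Hamiltonian carries a supremum over the compact control set $\mathcal K$, so the estimates must be \emph{uniform} in $(\alpha,c)$; this is where restricting to $\bar{\calD}$ with $\mathcal D_1\subset(0,\infty)$ compact (hence $x$ bounded and bounded away from $0$, which also sidesteps the singularity $U_2(0)=-\infty$) is essential, since it renders $rx$, $\alpha(\mu-r)x-c$, and especially the quadratic diffusion $\tfrac12\sigma^2\alpha^2x^2$ Lipschitz in $(x,z)$ uniformly over $\mathcal K$. Second, the second-order operator is genuinely \emph{degenerate} (the diffusion vanishes when $\alpha=0$), so only degenerate-elliptic comparison is available; this is compatible with the Crandall--Ishii matrix inequality, but the $\epsilon$-penalization must be tuned so the second-order contribution carries the right sign. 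Third, the absence of a zeroth-order term is why the parabolic time-doubling or the exponential change of variable is needed to close the argument. Once these are in place, letting $\delta,\epsilon\downarrow0$ (and the auxiliary parameters to their limits) contradicts the assumed positive gap, yielding comparison and hence the asserted uniqueness of $V$ on $\bar{\calD}$.
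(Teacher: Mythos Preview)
Your proposal is correct and follows the same approach as the paper: continuity of $V$, then the DPP combined with It\^o's formula for the viscosity sub/supersolution inequalities, and a comparison principle for uniqueness on the compact subdomain. The paper's own proof is much terser --- it simply cites classical regularity results of Lions for continuity, refers back to the It\^o argument in Proposition~\ref{prop:gen-vrf} for the solution property, and invokes a comparison theorem from Fleming--Soner for uniqueness --- whereas you spell out the doubling-of-variables machinery and the structural issues (compact $\mathcal K$, degenerate diffusion, no zeroth-order term) that make that citation go through.
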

\begin{proof}
    First, notice that under the assumptions on admissible control specified in \eqref{eq:adm-set} as well as those on the utility functions specified in \eqref{eq:gen-util}, the continuity of the value function \(V\) in \eqref{eq:gen-value} over the domain \(\calD\) can be established following the classical results of \cite{lions1983hjbI,lions1982hjb}. Therefore, \(V\) is bounded and uniformly continuous on \(\bar{\calD}\). Combining Proposition \ref{prop:gen-dpp} and similar arguments of It\^{o}'s formula in its proof, the viscosity solution property in Definition \ref{def:visc-soln} can be established. The uniqueness result follows a classical comparison principal \cite[Theorem V9.1]{fleming2006controlled}.
\end{proof}

Given \(V\) being a \(\mathcal{C}^{1,2,1}\left([0,T)\times(0,\infty)\times[0,1]\right)\bigcap\calC\left([0,T]\times(0,\infty)\times[0,1]\right)\), define the Hamiltonian as
\[H(t,x,z,\alpha,c,p,q)\coloneqq zU_1(c)+\left\{x\left[\alpha\mu+(1-\alpha)r\right]-c\right\}p+\frac{\sigma^2\alpha^2}{2}x^2q.\]
Then, we have that for any \((t,x,z)\in[0,T)\times(0,\infty)\in[0,1]\),
\[
    (\alpha_t^*,c_t^*)=(\alpha^*(t,x,z),c^*(t,x,z))=\argmax_{(\alpha,c)\in\mathcal K}H\Big(t,x,z,\alpha,c,\partial_xV(t,x,z),\partial_x^2V(t,x,z)\Big),
\] 
where
\begin{align}
    \alpha^*_t&=-\frac{\partial_xV(t,x,z)}{\partial_x^2V(t,x,z)}\cdot\frac{(\mu-r)}{\sigma^2x}\wedge M,\label{eq:gen-opt-alpha}\\
    c^*_t&=\argmax_{c\in[0,M]}\Big\{-\partial_xV(t,x,z)c+zU_1(c)\Big\};\label{eq:gen-opt-c}
\end{align}
note that by Lemma \ref{lem:gen-v-1}, \(\alpha_t^*>0\).


\subsubsection{The Inverse Problem: Identifiability of the Utility Functions}
In this section, we focus on the ``inverse'' problem with respect to the optimal asset allocation-consumption scenario and study the ``identifiability'' of the utility functions as well as the discounting scheme out of the optimal investment policies. More specifically, we assume the client provides her decision policies (in the sense of the allocation-consumption processes) to the inference agent.

Following practical protocols, we assume that the inference agent does not know the discounting scheme \(\dot{\beta}\) nor the utility functions \(U_1\) and \(U_2\). Nevertheless, the inference agent tries to infer these characteristic functions based on available information, namely the joint allocation-consumption process (i.e., control policy) provided by the client. 

To start, let 
\begin{equation}
    \label{eq:gen-inv-opt-ctrl}
    ({\bar\alpha},{\bar c}):[0,T]\times(0,\infty)\times[0,1]\to\mathcal{K}
\end{equation}
be some allocation and consumption policies of a client such that 
\[\left(\bar{\balpha}=\left\{\bar\alpha_t\right\}_{t\in[0,T]}=\left\{\bar\alpha(t,\ctrldX[\bar{\balpha},\bar{\bc}]_t,\beta_t)\right\}_{t\in[0,T]},\bar{\bc}=\left\{\bar c_t\right\}_{t\in[0,T]}=\left\{\bar c(t,\ctrldX[\bar{\balpha},\bar{\bc}]_t,\beta_t)\right\}_{t\in[0,T]}\right)\in\calA,\]
where \(\left(\left\{\beta_t\right\}_{t\in[0,T]},\left\{\ctrldX[\bar{\balpha},\bar{\bc}]_t\right\}_{t\in[0,T]}\right)\) solves
\begin{align*}
    &d\beta_t=\dot{\beta}_tdt,\\
    &d\ctrldX[\bar{\balpha},\bar{\bc}]_t=\left\{\ctrldX[\bar{\balpha},\bar{\bc}]_t\left[\bar\alpha\left(t,\ctrldX[\bar{\balpha},\bar{\bc}]_t,\beta_t\right)(\mu-r)+r\right]-\bar c\left(t,\ctrldX[\bar{\balpha},\bar{\bc}]_t,\beta_t\right)\right\}dt+\sigma \bar\alpha\left(t,\ctrldX[\bar{\balpha},\bar{\bc}]_t,\beta_t\right)\ctrldX[\bar{\balpha},\bar{\bc}]_tdW_t,
\end{align*}
for any given \((\beta_0,\ctrldX[\bar{\balpha},\bar{\bc}]_0)\in[0,1]\times(0,\infty)\), and 
\[\left(\bar{\balpha},\bar{\bc}\right)\in\argmax_{(\balpha,\bc)\in\calA}J(t,x,z,\balpha,\bc),\ \ \forall(t,x,z)\in[0,T]\times(0,\infty)\times[0,1]\]
subject to \eqref{eq:gen-wealth} and \eqref{eq:gen-discnt} . Write
\begin{equation}
    \label{eq:inv-value}
    \bar{V}(t,x,z)=J(t,x,z,\bar\balpha,\bar\bc),\ \ \forall (t,x,z)\in[0,T]\times(0,\infty)\times[0,1].
\end{equation}
Note that the inference agent has full access to \eqref{eq:gen-inv-opt-ctrl}.

\begin{theorem}[Identifiability]\label{thm:gen-idtf}
Assume that 
\begin{enumerate}
    \item \(\bar\alpha,\bar c\in\calC^{1,1,1}([0,T)\times(0,\infty)\times[0,1])\bigcap\calC^{0}([0,T]\times(0,\infty)\times[0,1])\);
    \item \(\bar\alpha(t,x,z)\in(0,M)\) for all \((t,x,z)\in[0,T]\times(0,\infty)\times[0,1]\);\label{assp:2}
    \item for any \((t,z)\in[0,T]\times[0,1]\),
    \[\bar c(t,x,z)<x,\ \  \forall x>0;\]
    \item both \(\bar\alpha(T,\cdot,\cdot)\) and \(\bar c(T,\cdot,\cdot)\) are ``\(z\)-free'', denoted by 
    \[\bar\alpha(T,x,z)\equiv\bar\alpha_T(x),\ \ \bar c(T,x,z)\equiv\bar c_T(x),\ \ \forall (x,z)\in(0,\infty)\times[0,1];\]
    \item for \(x\in(0,\infty)\), \(\bar\alpha_T(x)>0\), \(\bar c_T\) is invertible, and the following difference for any \((t,z)\in[0,T)\times(0,1]\),
    \[\Delta(t,x,z)\coloneqq\int_x^1\frac{dy}{y\bar\alpha(t,y,z)}-\int_{\bar c_T^{-1}(\bar c(t,x,z))}^1\frac{dy}{y\bar\alpha_T(y)}\]
    depends only on \((t,z)\), namely \(\Delta(t,x,z)\equiv\Delta(t,z)\). \label{assp:5}
\end{enumerate}
Then both the discounting scheme characterized by \(\dot{\beta}\) and the utility functions \(U_i\in\mathcal U\) for \(i=1,2\), with \(U_1(0)=0\) and \(U_2(0)=-\infty\), are identifiable up to an affine transform. 
\end{theorem}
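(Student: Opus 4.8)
The plan is to turn the observed feedback controls $(\bar\alpha,\bar c)$ into pointwise first-order conditions for the value function $\barV$ of \eqref{eq:inv-value}, integrate those conditions to reconstruct $U_1$ and $U_2$ up to the unavoidable affine freedom, and finally read off the time preference $\dot\beta$ from the optimality of the recovered value function along the state dynamics. Throughout I would exploit that the theorem's hypotheses force every maximizer to sit in the interior: Assumption~\ref{assp:2} keeps $\bar\alpha$ strictly inside $(0,M)$, while condition~3 ($\bar c(t,x,z)<x$) together with the structure $U_1(0)=0,\;U_2(0)=-\infty$ keeps consumption interior, so the suprema in \eqref{eq:gen-opt-alpha}--\eqref{eq:gen-opt-c} are attained where the derivatives vanish. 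This yields, for all $(t,x,z)$ with $z>0$,
\begin{equation*}
\frac{\partial_x^2\barV(t,x,z)}{\partial_x\barV(t,x,z)}=-\frac{\mu-r}{\sigma^2 x\,\bar\alpha(t,x,z)},\qquad U_1'(\bar c(t,x,z))=\frac{\partial_x\barV(t,x,z)}{z}.
\end{equation*}
The first identity is a linear ODE in $x$ for $\log\partial_x\barV$; integrating it gives
\begin{equation*}
\partial_x\barV(t,x,z)=\partial_x\barV(t,1,z)\,\exp\!\left(\frac{\mu-r}{\sigma^2}\int_x^1\frac{dy}{y\,\bar\alpha(t,y,z)}\right),
\end{equation*}
so that $\partial_x\barV$ is determined up to a single multiplicative factor depending only on $(t,z)$.

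Next I would pin down that $(t,z)$-factor using the terminal data and Assumption~\ref{assp:5}. At $t=T$ the boundary condition $\barV(T,x,z)=zU_2(x)$ gives $\partial_x\barV(T,x,z)=zU_2'(x)$; substituting into the two displays above recovers $U_2'$ from $\bar\alpha_T$ up to a positive constant $U_2'(1)$ (hence $U_2$ up to an affine transform), and the terminal consumption condition produces the link $U_1'(\bar c_T(x))=U_2'(x)$. For general $t$ I would equate the two expressions for $U_1'(\bar c(t,x,z))$: one obtained through the recovered $\partial_x\barV$, the other obtained from the terminal link after the substitution $w=\bar c_T^{-1}(\bar c(t,x,z))$ (well defined since $\bar c_T$ is invertible by condition~\ref{assp:5}). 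Taking logarithms shows that $\tfrac{\mu-r}{\sigma^2}\Delta(t,x,z)$ equals $\log U_2'(1)-\log\!\big(\partial_x\barV(t,1,z)/z\big)$.

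Here Assumption~\ref{assp:5}---that $\Delta$ is $x$-free---is precisely the consistency requirement guaranteeing that $U_1'(\bar c(t,x,z))$ depends on $(t,x,z)$ only through the value $\bar c(t,x,z)$, so that a single well-defined, monotone $U_1$ reproducing the observed consumption actually exists. The same relation simultaneously fixes the missing $(t,z)$-factor, so $\partial_x\barV$, and therefore $U_1$, is now determined up to the same positive constant $U_2'(1)$. This settles identifiability of both utilities up to affine transforms (the additive constants are free because they change $J$ in a control-independent way and hence leave the optimal policy invariant).

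Finally I would recover $\dot\beta$ from the optimality of the reconstructed $\barV$. By the maximum-principle structure underlying \eqref{eq:gen-hjb}, the co-state $p_t=\partial_x\barV(t,X_t,\beta_t)$ along the optimal trajectory satisfies $dp_t=-r\,p_t\,dt-\tfrac{\mu-r}{\sigma}p_t\,dW_t$, the $\bar\alpha$-dependent terms in the drift cancelling through the allocation condition \eqref{eq:gen-opt-alpha}. Applying It\^o's formula to the \emph{already-recovered} function $\partial_x\barV$ along $(t,X_t,\beta_t)$ and matching drifts gives
\begin{equation*}
\dot\beta_t\,\partial_{xz}\barV=-r\,\partial_x\barV-\partial_{tx}\barV-\big(X_t[\bar\alpha(\mu-r)+r]-\bar c\big)\partial_x^2\barV-\tfrac12\sigma^2\bar\alpha^2 X_t^2\,\partial_x^3\barV,
\end{equation*}
in which every quantity except $\dot\beta_t$ is known; solving for $\dot\beta_t$ recovers the discounting scheme, the overall scaling $U_2'(1)$ cancelling between the numerator and $\partial_{xz}\barV$. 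I expect this disentangling step to be the main obstacle: one must argue that the resulting expression is genuinely a function of $t$ alone---forced by the self-consistency of a bona fide optimal control problem, and cross-checked against the $dW_t$-coefficient, which reproduces \eqref{eq:gen-opt-alpha}---and that the only residual ambiguity is the joint affine/scaling freedom in $(U_1,U_2,\dot\beta)$. It is exactly the interiority Assumptions~\ref{assp:2} and~3 (making all first-order conditions hold with equality) together with the compatibility Assumption~\ref{assp:5} that make this reconstruction internally consistent.
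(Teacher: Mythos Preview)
Your proposal is correct and follows essentially the same route as the paper: integrate the allocation first-order condition to get $\partial_x\barV$ up to a $(t,z)$-factor, read off $U_2'$ and then $U_1'=U_2'\circ\bar c_T^{-1}$ from the terminal slice, use Assumption~\ref{assp:5} to pin down that factor, and finally isolate $\dot\beta_t$ by differentiating the HJB in $x$. The only cosmetic difference is in the last step---the paper differentiates the \emph{reduced} HJB (with the optimal $\bar\alpha$ already substituted, so that only $\partial_x^2\barV$ appears), whereas you invoke the adjoint dynamics of $p_t=\partial_x\barV$ and apply It\^o, producing an equivalent expression involving $\partial_x^3\barV$; once you use $\partial_x^2\barV=-\tfrac{(\mu-r)}{\sigma^2 x\bar\alpha}\partial_x\barV$ the two formulas coincide.
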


\begin{remark}
     This result is also consistent with the finding in \cite{cao2021identifiability} that the identifiability of the unknown utility function in an inverse optimal control problem is equivalent to the identifiability of the corresponding value function under the observed optimal policy. Assumptions \ref{assp:2} and \ref{assp:5} enunciate the precise dependency of value function \(\bar V\) in \eqref{eq:inv-value} and the observed policy \((\bar\alpha,\bar c)\).
\end{remark}

\begin{proof}
    First, by Assumption \ref{assp:2}, for all  \((t,x,z)\in[0,T]\times(0,\infty)\times[0,1]\), \eqref{eq:gen-hjb} is equivalent to
    \begin{equation}
    \label{eq:gen-hjb-pde}
    \left\{\begin{aligned}
        &\partial_tV(t,x,z)+\dot{\beta}_t\partial_zV(t,x,z)+rx\partial_xV(t,x,z)\\
        &\hspace{100pt}-\frac{(r-\mu)^2}{2\sigma^2}\frac{\left[\partial_xV(t,x,z)\right]^2}{\partial_x^2V(t,x,z)}=zU^*_1\left(\frac{\partial_xV(t,x,z)}{z}\right),\ \ t\in[0,T);\\
        &V(T,x,z)=zU_2(x),
    \end{aligned}
    \right.
    \end{equation}
    where \(U_1^*\) is the Legendre transform of the concave utility function \(U_1:[0,\infty)\to\R\),
    \[U_1^*(\kappa)\coloneqq \inf_{c\in[0,M]}\left\{\kappa c-U_1(c)\right\},\ \ \forall\kappa\in\R.\]
    Now, we construct the value function \(\bar V\) in \eqref{eq:inv-value} from \eqref{eq:gen-hjb-pde}. By \eqref{eq:gen-opt-alpha} and Assumption \ref{assp:2},
    \[
    \bar\alpha_T(x)=-\frac{U_2'(x)}{U_2''(x)}\frac{\mu-r}{\sigma^2x}\implies U_2(x)=k_1\int_1^x\exp\left\{\int_y^1\frac{\mu-r}{\sigma^2u\bar\alpha_T(u)}du\right\}dy+k_2,
    \]
    for some \(k_1,k_2>0\); in particular, 
    \[U_2'(x)=k_1\exp\left\{\int_x^1\frac{\mu-r}{y\bar\alpha_T(y)}dy\right\}.\]
    By \eqref{eq:gen-opt-c}, for any \(x>0\),
    \[U_1'(\bar c_T(x))=U_2'(x)\implies U_1'(x)=U_2'(\bar c_T^{-1}(x))=k_1\exp\left\{\int_{\bar c_T^{-1}(x)}^1\frac{\mu-r}{\sigma^2y\bar\alpha_T(y)}dy\right\} \]
    and 
    \[U_1(x)=k_1\int_1^x\exp\left\{\int_{\bar c_T^{-1}(y)}^1\frac{\mu-r}{\sigma^2u\bar\alpha_T(u)}du\right\}dy+k_3\]
    for some \(k_3>0\).
    
    For any \((t,z)\in[0,T)\times(0,1]\) and \(x>0\), by \eqref{eq:gen-opt-alpha}--\eqref{eq:gen-opt-c},
    \[\begin{cases}
    \bar\alpha(t,x,z)=-\frac{\partial_x\bar V(t,x,z)}{\partial_x[\partial_x\bar V(t,x,z)]}\frac{\mu-r}{\sigma^2x},\\
    \partial_x\bar V(t,x,z)=zU_1'(\bar c(t,x,z));
    \end{cases}\implies\partial_x \bar V(t,x,z)=K_1(t,z)\exp\left\{\int_x^1\frac{\mu-r}{\sigma^2y\bar\alpha(t,y,z)}dy\right\},\]
    where 
    \[K_1(t,z)=k_1z\exp\{-\frac{\mu-r}{\sigma^2}\Delta(t,z)\}.\]
    Rewrite \eqref{eq:gen-hjb-pde} as
    \[
    \partial_t\bar V(t,x,z)+\dot{\beta}_t\partial_z\bar V(t,x,z)=-\left\{\left[r+\frac{\bar\alpha(t,x,z)(\mu-r)}{2}\right]x-\bar c(t,x,z)\right\}\partial_x\bar V(t,x,z)-zU_1(\bar c(t,x,z)).
    \]
    Differentiating with respect to \(x\) on both sides, we have 
    \[\dot{\beta}_t=-\frac{\partial_t[\partial_x\bar V(t,x,z)]+\partial_x\left\{\left\{\left[r+\frac{\bar\alpha(t,x,z)(\mu-r)}{2}\right]x-\bar c(t,x,z)\right\}\partial_x\bar V(t,x,z)+zU_1(\bar c(t,x,z))\right\}}{\partial_z[\partial_x\bar V(t,x,z)]}.\]
\end{proof}

We conclude the analysis on finite-time horizon by discussing a special case with an explicit solution. 
\begin{example}
Set $\beta_0=1$, $\dot\beta_t = 0$ $(0\leq t \leq T)$, {$U_1(c) = 0$ and a CRRA (power) utility $U_2(x) = \frac{x^\theta}{\theta}$ with $0<\theta<1$}.  Also set the constant $M$ such that $M > \frac{\mu-r}{\sigma^2}$.
In this case, {we face a classic control problem that is time-consistent. Hence state augmentation is not necessary.}  In addition, both the optimal control and the inverse problem have explicit representations. The goal here is to identify the parameter $\theta$ from the client.

Consequently, define the value function:
    \begin{eqnarray}
        V(t,x) = \sup_{\alpha \in \mathcal{A}}\mathbb{E}\Big[ U_2(X_T)\,\left|\,X_t = x\Big].\right.
    \end{eqnarray}
  The value function satisfies the following HJB equation:
    \begin{eqnarray}
        -\partial_t V - \sup_{\alpha \in \mathcal{A}} \Big[\mathcal{L}^\alpha V(t,x)\Big] = 0,
    \end{eqnarray}
    with boundary condition $V(T,x) = U_2(x)= \frac{x^\theta}{\theta}$,
    where the generator is defined as $\mathcal{L}^\alpha V(t,x) = x\Big( \alpha \mu + (1-\alpha)r\Big)\partial_{x}V+ \frac{1}{2} x^2 \alpha^2\sigma^2 \partial_{x}^2 V$.
   The optimal policy follows:
    \begin{eqnarray}
        \bar\alpha(t,x) = -M\vee \left(- \frac{(\mu-r)\partial_x V}{x \sigma^2 \partial_x^2 V}\right)\wedge M.
    \end{eqnarray}
  For now assume that $-M\leq \left(- \frac{(\mu-r)\partial_x V}{x \sigma^2 \partial_x^2 V}\right)\leq M$ (to be checked later). Then plugging it back into the HJB equation, we have
    \begin{eqnarray}
        -\frac{\partial V}{\partial t} = x r \partial_{x}V - \frac{1}{2} \frac{(\mu-r)(\partial_{x}V)^2}{\sigma^2 \partial^2_{x}V}
    \end{eqnarray}
    with boundary condition $V(T,x) =  \frac{x^\theta}{\theta}$.
 We take the ansatz $V(t,x) = \phi(t)\frac{x^\theta}{\theta}$. Hence $\phi(t)$ satisfies:
\begin{eqnarray}
    \phi'(t) = \rho \phi(t) = 0, \quad \phi(T)=1,
\end{eqnarray}
where $\rho = p\times \sup_{\alpha\in[-M,M]}\Big[ \alpha(\mu-r) + r -\frac{1}{2}a^2(1-p)\sigma^2\Big]$. Hence $\bar\alpha = \frac{
\mu-r}{\sigma^2(1-\theta)}\in[-M,M]$. In this case, it is obvious that condition $-M\leq \left(- \frac{(\mu-r)\partial_x V}{x \sigma^2 \partial_x^2 V}\right)\leq M$ is satisfied. Therefore we can recover the preference parameter by using $1-\frac{\mu-r}{\bar \alpha \sigma^2}$.
 \end{example}

\subsection{Infinite-time Horizon}\label{subsubsec:gen-discnt-inf}
{Now we shift our focus to an infinite-time horizon setting that accommodates a long-run investment planning scenario. 

Recall that the investing client is holding a general discounting scheme \(\beta=\{\beta_t\}_{t\geq0}\) where}
\begin{itemize}
    \item \(\beta_t\in[0,1]\) for all \(t\in[0,\infty)\) such that \(\lim_{t\to\infty}\beta_t=0\); and
    \item there exists \(\dot{\beta}:[0,\infty)\to\R\) such that \(\dot{\beta}\) is integrable on \([0,t]\) with \(\beta_t=\int_0^t\dot{\beta}_sds+\beta_0\) for any \(t>0\).
\end{itemize}

For any \((t,x,z)\in[0,\infty)\times\R\times[0,1]\), define the total reward function as 
\begin{equation}
    \label{eq:acc-util-inf}
    J_\infty(t,x,z,\balpha,\bc)\coloneqq\E\left[\int_t^\infty\beta_sU_1(c_s)ds\biggl|X_t=x,\beta_t=z\right]
\end{equation}
subject to  \eqref{eq:gen-wealth} and \eqref{eq:gen-discnt}, under a given allocation process \(\balpha=\{\alpha_t\}_{t\geq0}\) and a given consumption process \(\bc=\{c_t\}_{t\geq0}\) with \(c_t\geq0\). For any \((t,x,z)\in[0,\infty)\times\R\times[0,1]\), define the value function as follows,
\begin{equation}
    \label{eq:gen-value-inf}
    V_\infty(t,x,z)=\sup_{(\balpha,\bc)\in\calA}J_\infty(t,x,z,\balpha,\bc),\,t\in[0,t);\ \ \lim_{t\to\infty}V_\infty(t,x,z)=0,
\end{equation}
subject to \eqref{eq:gen-wealth} and \eqref{eq:gen-discnt}. 


It is easy to show that the value function \(V_\infty\) in \eqref{eq:gen-value-inf} will have similar results as specified in Section \ref{subsec:gen-finite} therefore here we state these results without proofs.
First, we have the necessary condition for \(V_\infty\).
\begin{proposition}\label{prop:gen-dpp-inf}
For any \((t,x,z)\in[0,\infty)\times\R\times[0,1]\) and \(\tau\in\bar{\mathbb T}_t\) where \(\bar{\mathbb T}_t\) denotes all \(\{\bar\calF_t\}_{t\geq0}\)-adapted stopping times \(\tau\) such that \(\tau\in[t,\infty)\) a.s.. Then the value function \(V_\infty\) defined in \eqref{eq:gen-value-inf} satisfies
\begin{equation}
    \label{eq:gen-dpp-inf}
    \tag{DPP'}
    V_\infty(t,x,z)=\sup_{(\balpha,\bc)\in\calA}\E\left[\int_t^{\tau}\beta_sU_1(c_s)ds+V_\infty(\tau, \ctrldX_{\tau},\beta_{\tau})\,\biggl|\,\ctrldX_t=x,\beta_t=z\right].
\end{equation}
\end{proposition}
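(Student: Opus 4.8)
The plan is to transcribe the argument of Proposition~\ref{prop:gen-dpp}, replacing the finite terminal payoff by the vanishing boundary condition $\lim_{s\to\infty}V_\infty(s,\cdot,\cdot)=0$ in \eqref{eq:gen-value-inf}. Fix $(t,x,z)\in[0,\infty)\times\R\times[0,1]$, an admissible pair $(\balpha,\bc)\in\calA$, and a stopping time $\tau\in\bar{\mathbb T}_t$. Splitting the objective \eqref{eq:acc-util-inf} at $\tau$ gives
\[
J_\infty(t,x,z,\balpha,\bc)=\E\left[\int_t^{\tau}\beta_sU_1(c_s)ds\biggl|\ctrldX_t=x,\beta_t=z\right]+\E\left[\int_{\tau}^{\infty}\beta_sU_1(c_s)ds\biggl|\ctrldX_t=x,\beta_t=z\right],
\]
and, conditioning the second term on $(\ctrldX_\tau,\beta_\tau)$ and invoking the flow property of the augmented diffusion $(\ctrldX,\beta)$ solving \eqref{eq:gen-wealth}--\eqref{eq:gen-discnt}, it rewrites as $\E[J_\infty(\tau,\ctrldX_\tau,\beta_\tau,\balpha,\bc)\mid\ctrldX_t=x,\beta_t=z]$, exactly mirroring the decomposition in the proof of Proposition~\ref{prop:gen-dpp}.

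For the upper bound I would use $J_\infty(\tau,\ctrldX_\tau,\beta_\tau,\balpha,\bc)\leq V_\infty(\tau,\ctrldX_\tau,\beta_\tau)$ pointwise from the definition \eqref{eq:gen-value-inf} and take the supremum over $(\balpha,\bc)\in\calA$, which bounds $V_\infty(t,x,z)$ above by the right-hand side of \eqref{eq:gen-dpp-inf}. For the reverse inequality I would reuse the $\epsilon$-optimal concatenation device from the proof of Proposition~\ref{prop:gen-dpp}: keep an arbitrary $(\balpha,\bc)\in\calA$ on $[t,\tau]$ and, on $[\tau,\infty)$, paste in an $\epsilon$-optimal continuation $(\balpha^{\epsilon,\tau,\ctrldX_\tau,\beta_\tau},\bc^{\epsilon,\tau,\ctrldX_\tau,\beta_\tau})$, i.e.\ one whose objective exceeds $V_\infty(\tau,\ctrldX_\tau,\beta_\tau)-\epsilon$ for each realized $(\ctrldX_\tau,\beta_\tau)$. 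A measurable-selection argument keeps the spliced control $\bar\calF$-adapted, hence in $\calA$; evaluating $J_\infty$ at it and letting $\epsilon\downarrow0$ delivers the matching lower bound, and the two inequalities together give \eqref{eq:gen-dpp-inf}.

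The one genuine departure from the finite-horizon proof, and the step I expect to need the most care, is the well-posedness of the infinite-horizon objects and the legitimacy of conditioning at a possibly large $\tau$. Because the controls take values in the compact set $\mathcal K$ and $U_1$ is continuous with $U_1(0)=0$, the integrand $\beta_sU_1(c_s)$ is nonnegative and bounded by $U_1(M)\beta_s$; nonnegativity makes the truncated integrals $\int_t^{T'}\beta_sU_1(c_s)ds$ increasing in $T'$, so monotone convergence both defines $J_\infty$ as an element of $[0,\infty]$ and justifies exchanging the limit $T'\to\infty$ with the expectation. Finiteness of $J_\infty$, needed for the identity to be non-vacuous, then reduces to integrability of the discount, which is exactly the decay structure $\beta_s\downarrow0$ underlying the boundary condition $\lim_{s\to\infty}V_\infty(s,x,z)=0$; in particular this vanishing tail is what guarantees that the splitting at $\tau$ leaves no residual term as $\tau$ grows. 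With $J_\infty$ finite, the tower property at the a.s.-finite stopping time $\tau$ is standard after localizing by $\tau\wedge n$ and invoking the flow property of $(\ctrldX,\beta)$ on each finite window before sending $n\to\infty$; all remaining manipulations are verbatim those of Proposition~\ref{prop:gen-dpp}.
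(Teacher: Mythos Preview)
Your proposal is correct and aligns with the paper's own stance: the paper does not write out a proof of Proposition~\ref{prop:gen-dpp-inf} at all, stating only that the infinite-horizon results follow by the same arguments as in Section~\ref{subsec:gen-finite}. Your sketch---split $J_\infty$ at $\tau$, use the flow property, bound above via $J_\infty\le V_\infty$, bound below via the $\epsilon$-optimal concatenation of Proposition~\ref{prop:gen-dpp}, and handle the infinite-horizon tail by monotone convergence against the nonnegative integrand $\beta_s U_1(c_s)$ together with the boundary condition $\lim_{s\to\infty}V_\infty=0$---is exactly the transcription the paper has in mind, with appropriate care taken at the one new point (well-posedness of the improper integral and localization $\tau\wedge n\to\tau$).
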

The corresponding HJB equation is given by
\begin{equation}
    \label{eq:gen-hjb-inf}
    \tag{HJB'}\left\{
    \begin{aligned}
    &\partial_tV_\infty(t,x,z)+\dot{\beta}_t\partial_zV_\infty(t,x,z)+rx\partial_xV_\infty(t,x,z)+\sup_{\alpha\in\R,c\geq0}\Big\{zU_1(c)+\infgen V_\infty(t,x,z)\Big\}=0,\ \ t\in[0,\infty);\\
    &\lim_{t\to\infty}V_\infty(t,x,z)=0.
    \end{aligned}\right.
\end{equation}

Likewise, combining Proposition \ref{prop:gen-dpp-inf} and It\^{o}'s formula, we have the following result.
\begin{proposition}
    \label{prop:gen-viscosity-inf}
    If the value function \(V_\infty\) in \eqref{eq:gen-value-inf} is jointly continuous on \(\calD'=[0,\infty)\times(0,\infty)\times[0,1]\), then it is a viscosity solution to \eqref{eq:gen-hjb-inf} over the domain \(\calD'\).
\end{proposition}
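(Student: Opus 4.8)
The plan is to mirror the finite-horizon argument behind Proposition \ref{prop:gen-viscosity}, replacing the fixed terminal time by a short localizing stopping time and invoking the infinite-horizon dynamic programming principle \eqref{eq:gen-dpp-inf} in place of \eqref{eq:gen-dpp}. Since $V_\infty$ is assumed jointly continuous on $\calD'$, it is both upper and lower semi-continuous there, so the viscosity sub/supersolution notions of (the natural analogue of) Definition \ref{def:visc-soln} apply verbatim; it then suffices to verify the two one-sided inequalities \eqref{eq:gen-subsoln}--\eqref{eq:gen-supersoln} at an arbitrary interior point $(t_0,x_0,z_0)\in\calD'$. Throughout I abbreviate the admissible control set from \eqref{eq:adm-set} by $\mathcal K$ and write $(\ctrldX,\beta)$ for the state process driven by \eqref{eq:gen-wealth}--\eqref{eq:gen-discnt}.

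For the supersolution inequality, I would fix $\psi\in\calC^{1,2,1}(\calD')$ touching $V_\infty$ from below at $(t_0,x_0,z_0)$, choose an arbitrary constant control $(\alpha,c)\in\mathcal K$, and start $(\ctrldX,\beta)$ at $(x_0,z_0)$. Introducing the localizing stopping time $\theta_h=(t_0+h)\wedge\inf\{s\ge t_0:(s,\ctrldX_s,\beta_s)\notin B(t_0,x_0,z_0)\}$ keeps the trajectory inside the neighborhood where $\psi\le V_\infty$ and renders the It\^o stochastic integral a true martingale. Feeding $\tau=\theta_h$ into \eqref{eq:gen-dpp-inf} gives the suboptimality bound $V_\infty(t_0,x_0,z_0)\ge\E\big[\int_{t_0}^{\theta_h}\beta_sU_1(c)\,ds+V_\infty(\theta_h,\ctrldX_{\theta_h},\beta_{\theta_h})\big]$; replacing $V_\infty$ by the smaller $\psi$ on the right, expanding $\E[\psi(\theta_h,\ctrldX_{\theta_h},\beta_{\theta_h})]$ with It\^o's formula, cancelling $\psi(t_0,x_0,z_0)=V_\infty(t_0,x_0,z_0)$, dividing by $h$ and letting $h\downarrow0$ yields $\partial_t\psi+\dot\beta_{t_0}\partial_z\psi+rx_0\partial_x\psi+z_0U_1(c)+\infgen\psi\le0$ at $(t_0,x_0,z_0)$. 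Taking the supremum over $(\alpha,c)$ reproduces \eqref{eq:gen-supersoln}.

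For the subsolution inequality the same machinery runs on near-optimality rather than suboptimality. Given $\phi\in\calC^{1,2,1}(\calD')$ touching $V_\infty$ from above and any $\epsilon>0$, the definition \eqref{eq:gen-value-inf} together with \eqref{eq:gen-dpp-inf} supplies an admissible control $(\balpha^\epsilon,\bc^\epsilon)$ that is $\epsilon h$-optimal over $[t_0,\theta_h]$. Using $\phi\ge V_\infty$, bounding the integrand $\beta_sU_1(c^\epsilon_s)+\infgen[\alpha^\epsilon_s,c^\epsilon_s]\phi$ from above by its pointwise supremum over $(\alpha,c)\in\mathcal K$, applying It\^o to $\phi$, dividing by $h$, and sending first $h\downarrow0$ and then $\epsilon\downarrow0$ gives \eqref{eq:gen-subsoln}. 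Having verified both one-sided inequalities, $V_\infty$ is simultaneously a viscosity sub- and supersolution, hence a viscosity solution to \eqref{eq:gen-hjb-inf} on $\calD'$.

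The step demanding the most care is the localization-and-limit passage. I must check that $\theta_h\downarrow t_0$ almost surely with $\prob(\theta_h<t_0+h)$ vanishing fast enough that the boundary contribution is negligible after dividing by $h$; that $h^{-1}\E[\int_{t_0}^{\theta_h}(\cdots)\,ds]$ converges to the integrand evaluated at $(t_0,x_0,z_0)$, which uses continuity of the coefficients, of $V_\infty$, and mild regularity of $\dot\beta$ at $t_0$ (e.g.\ continuity, or that $t_0$ is a Lebesgue point of $\dot\beta$); and that the quadratic-growth bound on $U_1$ in \eqref{eq:gen-util} combined with $(\balpha,\bc)\in\calA$ legitimizes dominated convergence and the martingale property of the stochastic integral. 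Finally, the restriction $x_0>0$ built into $\calD'$ is exactly what keeps $B(t_0,x_0,z_0)$ within the strictly-positive-wealth region where the coefficients and $U_1$ are smooth, paralleling the role of Lemma \ref{lem:pos-wealth} in the finite-horizon analysis.
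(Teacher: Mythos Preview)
Your proposal is correct and follows essentially the same approach as the paper. The paper does not give an explicit proof here---it simply states that the infinite-horizon results parallel those of Section~\ref{subsec:gen-finite}---and its finite-horizon argument (Proposition~\ref{prop:gen-viscosity}) is itself only a sketch invoking the DPP together with It\^o's formula on test functions, which is precisely the localization-and-limit scheme you spell out in detail.
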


With a classical solution to \eqref{eq:gen-hjb-inf}, we have the following verification theorem serving as sufficient conditions for \(V_\infty\).
\begin{proposition}
    \label{prop:gen-vrf-inf}
    Suppose that \(U_1:[0,\infty)\to\R^+\in\mathcal U\) continuous at \(0\). Let \(w:[0,\infty)\times\R\times[0,1]\to\R\) be a function such that 
    \[w\in\calC^{1,2,1}\left([0,\infty)\times\R\times[0,1]\right),\]
    and there exists a constant \(C>0\) with
    \[w(t,x,z)\leq C(1+|x|^2),\ \ \forall (t,x,z)\in[0,\infty)\times\R\times[0,1].\]
    \begin{enumerate}
        \item Assume that for any \((\alpha,c)\in\mathcal K\), 
        \[
        \left\{
        \begin{aligned}
            &\partial_tw(t,x,z)+\dot{\beta}_t\partial_zw(t,x,z)+rx\partial_xw(t,x,z)+zU_1(c)+\infgen w(t,x,z)\leq0,\\
            &\hspace{180pt}\forall (t,x,z)\in[0,\infty)\times\R\times[0,1];\\
            &\lim_{t\to\infty}w(t,x,z)=\infty, \ \  \forall (x,z)\in\R\times[0,1].
        \end{aligned}\right.
        \]
        Then \(w\geq V_\infty\) on \([0,\infty)\times\R\times[0,1]\).
        \item Assume further that there exists \(\hat{\alpha}:[0,\infty)\times\R\times[0,1]\to[-M,M]\) and \(\hat{c}:[0,\infty)\times\R\times[0,1]\to[0,M]\) such that 
        \[
        \left\{
        \begin{aligned}
            &\partial_tw(t,x,z)+\dot{\beta}_t\partial_zw(t,x,z)+rx\partial_xw(t,x,z)+zU_1(\hat c(t,x,z))+\infgen[\hat{\alpha}(t,x,z),\hat{c}(t,x,z)]w(t,x,z)=0,\\
            &\hspace{180pt}\forall (t,x,z)\in[0,T)\times\R\times[0,1];\\
            &\lim_{t\to\infty}w(t,x,z)=\infty, \ \  \forall (x,z)\in\R\times[0,1],
        \end{aligned}\right.
        \]
        also, with \(\beta_t=\beta_0+\int_0^t\dot{\beta}_sds\in[0,1]\) for all \(t\geq0\), the following SDE,
        \[dX_t=
        \left\{X_t\left[\hat{\alpha}(t,X_t,\beta_t)(\mu-r)+r\right]-\hat{c}(t,X_t,\beta_t)\right\}dt
        +\sigma\hat{\alpha}(t,X_t,\beta_t)X_tdW_t, 
        \]
        admits a unique solution \(\ctrldX[\hat{\balpha},\hat{\bc}]\) given \(X_0=x\) for any \(x\in\R\), and 
        \[\left(\hat{\balpha}=\left\{\hat{\alpha}_t\right\}_{t\geq0}=\left\{\hat{\alpha}(t,\ctrldX[\hat{\balpha},\hat{\bc}]_t,\beta_t)\right\}_{t\geq0}, \hat{\bc}=\left\{\hat{c}_t\right\}_{t\geq0}=\left\{\hat{c}(t,\ctrldX[\hat{\balpha},\hat{\bc}]_t,\beta_t)\right\}_{t\geq0}\right)\in\calA.\]
        Then \(w= V_\infty\) on \([0,\infty)\times\R\times[0,1]\), with \((\hat{\balpha},\hat{\bc})\) being an optimal joint allocation-consumption process.
    \end{enumerate}
\end{proposition}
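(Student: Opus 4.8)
The plan is to reproduce the localization-and-It\^o argument from the proof of Proposition~\ref{prop:gen-vrf}, replacing the finite terminal time $T$ by a limit $s\to\infty$ and the terminal evaluation at $T$ by a transversality condition governing the behavior of $w$ as $t\to\infty$ (consistent with \eqref{eq:gen-value-inf}, the relevant boundary condition here is $\lim_{t\to\infty}w(t,x,z)=0$). \textbf{Part 1.} Fix $(t,x,z)$ and an arbitrary $(\balpha,\bc)\in\calA$ with $\ctrldX$ solving \eqref{eq:gen-wealth}, $\ctrldX_t=x$. I would introduce the same localizing stopping times $\tau_n=\inf\{s\ge t:\int_t^s|\partial_xw(u,\ctrldX_u,\beta_u)|^2\,du\ge n\}$, so that $\int_t^{\,\cdot\,\wedge\tau_n}\partial_xw\,dW$ is a genuine martingale, apply It\^o's formula to $w(s\wedge\tau_n,\ctrldX_{s\wedge\tau_n},\beta_{s\wedge\tau_n})$, and take expectations. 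The pointwise HJB \emph{inequality} bounds the drift from above by $-\beta_uU_1(c_u)$, giving
\[
\E\!\left[w(s\wedge\tau_n,\ctrldX_{s\wedge\tau_n},\beta_{s\wedge\tau_n})\,\big|\,\ctrldX_t=x,\beta_t=z\right]\le w(t,x,z)-\E\!\left[\int_t^{s\wedge\tau_n}\beta_uU_1(c_u)\,du\,\Big|\,\ctrldX_t=x,\beta_t=z\right].
\]
Sending $n\to\infty$ exactly as in the finite-horizon proof (quadratic growth of $w$, the admissibility bound $\E[|\ctrldX_u|^2]<\infty$, dominated convergence, and $\tau_n\uparrow\infty$ a.s.) produces the same inequality with $s\wedge\tau_n$ replaced by $s$.

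\textbf{Part 1, continued, and the main obstacle.} Where the finite-horizon argument simply sets $s=T$, here I must let $s\to\infty$. Since $U_1\ge0$ on $[0,\infty)$ for $U_1\in\mathcal U$ and $\beta_u\ge0$, the integral term increases monotonically, so $\E[\int_t^{s}\beta_uU_1(c_u)\,du]\uparrow J_\infty(t,x,z,\balpha,\bc)$ by monotone convergence. The delicate point is the transversality term: I need $\lim_{s\to\infty}\E[w(s,\ctrldX_s,\beta_s)]=0$. The pointwise decay $\lim_{t\to\infty}w(t,\cdot,\cdot)=0$, the growth bound $|w|\le C(1+|x|^2)$, the decay $\beta_s\to0$, and a uniform-in-$s$ second-moment control of $\ctrldX$ (to be extracted from admissibility and the structure of \eqref{eq:gen-wealth}) should together justify passing the limit inside the expectation. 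Establishing this transversality condition—rather than the trivial terminal evaluation available when the horizon is finite—is the crux of the infinite-horizon case. Granting it, $w(t,x,z)\ge J_\infty(t,x,z,\balpha,\bc)$, and taking the supremum over $\calA$ yields $w\ge V_\infty$.

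\textbf{Part 2.} I would rerun the computation along $\ctrldX[\hat\balpha,\hat\bc]$ for the candidate feedback control $(\hat\balpha,\hat\bc)$. Because $(\hat\alpha,\hat c)$ attains equality in the HJB pointwise, each inequality above becomes an equality, and after the same $n\to\infty$ and $s\to\infty$ passages (the transversality term vanishing again) I obtain $w(t,x,z)=J_\infty(t,x,z,\hat\balpha,\hat\bc)$. As $(\hat\balpha,\hat\bc)\in\calA$, this gives $w\le V_\infty$; combined with Part~1 it yields $w=V_\infty$ on $[0,\infty)\times\R\times[0,1]$ and identifies $(\hat\balpha,\hat\bc)$ as an optimal joint allocation-consumption process.
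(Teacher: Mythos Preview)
Your proposal is correct and is precisely the argument the paper intends: the paper in fact omits the proof of this proposition, stating only that ``the value function \(V_\infty\) \dots\ will have similar results as specified in Section~\ref{subsec:gen-finite} therefore here we state these results without proofs,'' so your adaptation of the localization-and-It\^o argument from Proposition~\ref{prop:gen-vrf} with the terminal evaluation replaced by a transversality limit is exactly what is meant. You also correctly flagged both the apparent typo in the boundary condition (it should be $\lim_{t\to\infty}w(t,x,z)=0$, consistent with \eqref{eq:gen-value-inf}) and the one genuinely new step, namely justifying $\lim_{s\to\infty}\E[w(s,\ctrldX_s,\beta_s)]=0$.
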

Given that \(V_\infty\in\calC^{1,2,1}(\calD')\) being a classical solution to \eqref{eq:gen-hjb-inf}, then the optimal policy is given by 
\begin{align}
    \bar\alpha^*(t,x,z)&=-M\vee-\frac{\partial_xV_\infty(t,x,z)}{\partial_x^2V_\infty(t,x,z)}\cdot\frac{(\mu-r)}{\sigma^2x}\wedge M,\label{eq:gen-opt-alpha-inf}\\
     \bar c^*(t,x,z)&=\argmax_{c\in[0,M]}\Big\{-\partial_xV_\infty(t,x,z)c+zU_1(c)\Big\}.\label{eq:gen-opt-c-inf}
\end{align}

Accordingly, for the inverse problem, we also have the following identifiability result.
\begin{theorem}\label{thm:gen-idtf-inf}
Assume that 
\begin{enumerate}
    \item \(\bar\alpha,\bar c\in\calC^{1,1,1}([0,\infty)\times(0,\infty)\times[0,1])\bigcap\calC^{0}([0,\infty)\times(0,\infty)\times[0,1])\);
    \item \(\bar\alpha(t,x,z)\in(-M,M)\) for all \((t,x,z)\in[0,T]\times(0,\infty)\times[0,1]\);
    \item for any \((t,z)\in[0,\infty)\times[0,1]\),
    \[\bar c(t,x,z)<x,\ \  \forall x>0;\]
    \item \(\exists(t_0,z_0)\in[0,\infty)\times(0,1]\) such that  \(\bar c_0(\cdot)\coloneqq \bar c(t_0,\cdot,z_0)\) is invertible, and the following difference for any \((t,x,z)\in[0,T)\times(0,\infty)\times(0,1]\),
   \[\Delta(t,x,z)\coloneqq\int_x^1\frac{dy}{y\bar\alpha(t,y,z)}-\int_{\bar c_0^{-1}(\bar c(t,x,z))}^1\frac{dy}{y\bar\alpha_T(y)}\]
   depends only on \((t,z)\), namely, \(\Delta(t,x,z)\equiv\Delta(t,z)\).
\end{enumerate}
Then both the discounting scheme characterized by \(\dot{\beta}\) and the utility function \(U_1\in\mathcal U\) with \(U_1(0)=0\) are identifiable up to an affine transform.
\end{theorem}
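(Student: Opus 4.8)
The plan is to mirror the finite-horizon argument of Theorem \ref{thm:gen-idtf}, adapting it to two structural changes: the objective \eqref{eq:acc-util-inf} contains no terminal utility $U_2$, so only $U_1$ and $\dot\beta$ must be recovered, and there is no terminal condition $V(T,\cdot,\cdot)=zU_2(\cdot)$ to anchor the reconstruction --- the point $(t_0,z_0)$ supplied by Assumption 4 will serve that purpose instead.

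First I would reduce the HJB equation. Under Assumption 2 the allocation constraint is inactive, so the supremum in \eqref{eq:gen-hjb-inf} is attained in the interior and \eqref{eq:gen-hjb-inf} becomes a PDE whose running term is the Legendre transform $U_1^*(\kappa)=\inf_{c\in[0,M]}\{\kappa c-U_1(c)\}$, exactly as in \eqref{eq:gen-hjb-pde} but without the terminal row. Writing $\bar V=V_\infty$ and evaluating the interior first-order conditions \eqref{eq:gen-opt-alpha-inf}--\eqref{eq:gen-opt-c-inf} at the observed optimum yields the two pointwise identities
\[
\bar\alpha(t,x,z)=-\frac{\partial_x\bar V(t,x,z)}{\partial_x^2\bar V(t,x,z)}\frac{\mu-r}{\sigma^2 x},\qquad \partial_x\bar V(t,x,z)=z\,U_1'(\bar c(t,x,z)).
\]
Reading the first as a linear ODE in $x$ for $\partial_x\bar V$ and integrating from $x$ to $1$ gives
\[
\partial_x\bar V(t,x,z)=K_1(t,z)\exp\Big\{\int_x^1\frac{\mu-r}{\sigma^2 y\,\bar\alpha(t,y,z)}\,dy\Big\}
\]
for a strictly positive $K_1(t,z)$ determined entirely by the observed $\bar\alpha$.

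Next I would reconstruct $U_1$ from the anchor. At $(t_0,z_0)$ the second identity reads $\partial_x\bar V(t_0,x,z_0)=z_0U_1'(\bar c_0(x))$; substituting $x=\bar c_0^{-1}(c)$ and inserting the displayed formula for $\partial_x\bar V$ produces
\[
U_1'(c)=\frac{K_1(t_0,z_0)}{z_0}\exp\Big\{\int_{\bar c_0^{-1}(c)}^1\frac{\mu-r}{\sigma^2 y\,\bar\alpha(t_0,y,z_0)}\,dy\Big\},
\]
so integrating under the normalization $U_1(0)=0$ determines $U_1$ up to the positive multiplicative constant $K_1(t_0,z_0)/z_0$ and the additive integration constant, i.e.\ up to an affine transform. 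That this single-point reconstruction is consistent with the identity $\partial_x\bar V(t,x,z)=zU_1'(\bar c(t,x,z))$ at every $(t,x,z)$ is precisely the content of Assumption 4: equating the two expressions for $\partial_x\bar V$ and taking logarithms leaves $\frac{\mu-r}{\sigma^2}\Delta(t,x,z)$ set against $\log(zC_0)-\log K_1(t,z)$, so the requirement that $\Delta$ be $x$-free renders the system solvable and forces $K_1(t,z)=z\,C_0\exp\{-\frac{\mu-r}{\sigma^2}\Delta(t,z)\}$.

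Finally, with $\partial_x\bar V$ and $U_1$ known I would recover $\dot\beta$ exactly as in the finite-horizon proof: rewrite the reduced \eqref{eq:gen-hjb-inf} as an expression for $\partial_t\bar V+\dot\beta_t\partial_z\bar V$, differentiate in $x$, and solve
\[
\dot\beta_t=-\frac{\partial_t[\partial_x\bar V]+\partial_x\big\{\big([r+\tfrac{\bar\alpha(\mu-r)}{2}]x-\bar c\big)\partial_x\bar V+zU_1(\bar c)\big\}}{\partial_z[\partial_x\bar V]},
\]
written purely through the observed policy and the reconstructed $U_1$; one checks moreover that this quotient is invariant under the affine freedom in $U_1$, so $\dot\beta$ is pinned down exactly. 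The main obstacle, relative to the finite-horizon case, is the loss of the terminal anchor: I must verify that \eqref{eq:gen-hjb-inf} together with the far-field condition $\lim_{t\to\infty}V_\infty=0$ still determines $\bar V$ (up to the affine indeterminacy of $U_1$) without leaving a free additive function of $(t,z)$, and that $\partial_z[\partial_x\bar V]\neq 0$ so that the final quotient is well-defined --- both of which should follow from the explicit form of $K_1(t,z)$ and the nondegeneracy encoded in Assumptions 2 and 4.
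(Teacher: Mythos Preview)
Your proposal is correct and follows exactly the approach the paper intends: the paper does not give a separate proof of Theorem~\ref{thm:gen-idtf-inf}, stating only that the infinite-horizon results parallel those of Section~\ref{subsec:gen-finite}, and your adaptation---replacing the terminal anchor $(T,\bar\alpha_T,\bar c_T)$ by the interior anchor $(t_0,z_0,\bar\alpha(t_0,\cdot,z_0),\bar c_0)$ supplied by Assumption~4, dropping $U_2$, and otherwise reproducing the reconstruction of $\partial_x\bar V$, $U_1$, $K_1(t,z)$, and $\dot\beta_t$ verbatim from the proof of Theorem~\ref{thm:gen-idtf}---is precisely that parallel. Your closing caveats (well-definedness of the quotient for $\dot\beta_t$, role of the far-field condition) are appropriate sanity checks but do not alter the argument.
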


\section{Discrete-time MDP with Entropy Regularization }\label{sec:discrete-time}
The continuous-time framework in Section \ref{sec:continuous_time} emphasizes the well-definedness of the mathematical framework when the client is subject to a generic discounting scheme with a time-varying rate, and it outlines conditions necessary for ensuring identifiability for both the utility functions and the discounting scheme. Building on these insights, this section explores a practical scenario focusing on the inference procedure. We adopt a parametric framework in which the client utilizes an exponential discounting scheme, parameterized by $\bar{\rho}$, alongside a utility function parameterized by $\bar{\theta} \in \mathbb{R}^d$. The client's preference parameter is summarized as $(\Bar{\rho},\Bar{\theta}
)$, which is unknown to the inference agent. The inference agent employs a maximum likelihood estimation method to infer the parameters $(\bar{\rho}, \bar{\theta})$. This analysis is conducted within a discrete-time MDP setting under the regularization of a Shannon entropy type. It encourages the client to fully explore the state-action space and introduces smoothness to the analysis at the same time; see \cite{haarnoja2017reinforcement}, for instance.

Mathematically, let us consider the entropy regularized MDP with state space $\calS$ and action space $\calA$, which can be finite or infinite. The state process follows $s_{t+1}\sim P(\cdot|s_t,a_t)$, with $P:\mathcal{S}\times \mathcal{A}\rightarrow\mathcal{P}(\mathcal{A})$ the transition kernel that maps from the joint state-action space to the distribution over the state space. After taking action $a$ at state $s$, we assume the client receives a deterministic reward $R(s,a)\in[0,1]$. Throughout the remainder of this paper, we will use the notation $\SumInt$ to denote the summation or integration over the action space, emphasizing that our framework accommodates both finite and infinite action spaces.

Under a generic preference parameter $(\theta,\rho)$, consider the entropy regularized objective:
\begin{align}
Q_{\rho,\theta}^*(s,a)=\max_\pi\mathbb{E}^\pi\left.\left[\sum_{t=1}^\infty \rho^ t \Big(U_\theta\big(R(s_t,a_t)\big)+\mathcal{H}\big(\pi(\cdot|s_t)\big)\Big)\right|s_0=s,a_0=a,a_t\sim \pi(s_t)\right],
\end{align}
where $\calH\big(\pi(\cdot|s)\big):= -\SumInt_{a\in\calA}\pi(a|s)\log\big(\pi(a|s)\big)$ is the Shannon's entropy. The optimal policy is then given by:
\begin{align}
\label{equ:optimal_policy}
\pi_{\rho,\theta}(a|s) =\frac{e^{Q_{\rho,\theta}^*(s,a)}}{\SumInt_{a\in\calA} e^{Q_{\rho,\theta}^*(s,a')}},
\end{align}
and the soft Bellman equation holds: 
\begin{align}
\label{equ:soft_Bellman_equation}
V_{\rho,\theta}^*(s) = \log\Big(\SumInt_{a\in\calA} e^{Q_{\rho,\theta}^*(s,a)} \Big).
\end{align}

\subsection{Maximum Likelihood Estimation}\label{sec:mle}
With a trajectory $\tau=\{(s_t,a_t)\}_{t=0}^\infty$ following the client's policy $\pi_{\Bar{\rho},\Bar{\theta}}$, we adopt a {\it maximum likelihood estimation method} to infer the client's preference parameter $(\Bar{\rho},\Bar{\theta})$, which is unknown to the inference agent. Specifically, the discounted likelihood of a trajectory $\tau=\{(s_t,a_t)\}_{t=0}^\infty$ following the client's policy $\pi_{\Bar{\rho},\Bar{\theta}}$ is defined as
\begin{eqnarray}
&&\E_{\tau\sim\pi_{\Bar{\rho},\Bar{\theta}}}\left[\log \left(\prod_{t=0}^\infty (P(s_{t+1}|s_t,a_t) \pi_{\rho,\theta}(a_t|s_t))^{\gamma^t}\right)\right]\nonumber\\
&&\qquad=\E_{\tau\sim\pi_{\Bar{\rho},\Bar{\theta}}}\Big[\sum_{t=0}^\infty \gamma^t\log \pi_{\rho,\theta}(a_t|s_t) \Big] + \E_{\tau\sim\pi_{\Bar{\rho},\Bar{\theta}}}\Big[\sum_{t=0}^\infty \gamma^t\log  P(s_{t+1}|s_t,a_t)  \Big],\label{eq:likelihood_obj}
\end{eqnarray}
where the notation $\tau\sim\pi_{\Bar{\rho},\Bar{\theta}}$ represents that the trajectory $\tau$ is sampled from applying policy $\pi_{\Bar{\rho},\Bar{\theta}}$ and $\gamma$ is a discount factor specified by the inference agent, which is potentially {\it different} from $\Bar\rho$.

\begin{remark} Note that in our case $\gamma\neq \Bar{\rho}$ because $\Bar{\rho}$ is the client's discount factor and is unknown to the inference agent.
This distinguishes us from the usual IRL literature, where $\Bar{\rho} $ is always assumed to be known \citep{bloem2014infinite}. For example, \citet{zeng2022maximum} studied the IRL problem using a maximum likelihood estimator by setting  $\gamma = \rho$ in \eqref{eq:likelihood_obj} and showed that their algorithm
converges to a stationary point with a finite-time guarantee. Note that this stationary point may not be the ground-truth solution.
\end{remark}

The maximum likelihood inference problem can be written as:
\begin{align}
\max_{(\rho,\theta)\in\Theta}\quad & \mathcal{L}(\rho,\theta):= 
\E_{\tau\sim\pi_{\Bar{\rho},\Bar{\theta}}}\Big[\sum_{t=0}^\infty \gamma^t \log \pi_{\rho,\theta}(a_t|s_t)\Big],
\end{align}
where $\pi_{\rho,\theta}$ is the optimal policy under the preference parameter $(\rho,\theta)$ defined in \eqref{equ:optimal_policy}. Here for simplicity we set $\Theta := (0,1)\times \mathbb{R}^{d}$. The maximum likelihood problem is to find a preference parameter $(\Bar{\rho},\Bar{\theta})$ that generates the client's trajectory with the highest likelihood. 

The goal is to investigate the landscape of the log-likelihood function $\mathcal{L}(\rho,\theta)$ with respect to $(\rho,\theta)$ and understand the possibility of recovering $(\Bar{\rho},\Bar{\theta})$, which is also referred to as the inverse problem. To proceed, we first show that $(\Bar{\rho},\Bar{\theta})$ is a stationary point of the likelihood function (see Proposition \ref{lemma:gradient_L}) and then show that likelihood function is concave near $(\Bar{\rho},\Bar{\theta})$ (see Theorem \ref{lemma:landscape}). Interestingly, the results of the landscape analysis is {\it independent of} the choice of $\gamma$, making our proposed method robust and practical.

\begin{proposition}
\label{lemma:gradient_L}
   It holds that
    \begin{eqnarray}\label{eq:first-order}
        \nabla_\theta \mathcal{L}(\rho,\theta)
        \Big|_{(\rho,\theta) = (\Bar{\rho},\Bar{\theta})} = 0,\quad \nabla_\rho \mathcal{L}(\rho,\theta)\Big|_{(\rho,\theta) = (\Bar{\rho},\Bar{\theta})} = 0. 
    \end{eqnarray}
\end{proposition}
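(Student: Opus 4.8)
The plan is to exploit the Gibbs (softmax) form of the optimal policy, under which $\nabla\log\pi_{\rho,\theta}(a|s)$ is a \emph{score function} whose conditional mean under $\pi_{\rho,\theta}(\cdot|s)$ vanishes identically. Writing $\xi$ for either coordinate $\theta$ or $\rho$, I would first combine \eqref{equ:optimal_policy} with the soft Bellman equation \eqref{equ:soft_Bellman_equation} into the decomposition
\[
\log\pi_{\rho,\theta}(a|s)=Q_{\rho,\theta}^*(s,a)-V_{\rho,\theta}^*(s),\qquad V_{\rho,\theta}^*(s)=\log\SumInt_{a\in\calA}e^{Q_{\rho,\theta}^*(s,a)}.
\]
Differentiating $V^*$ in $\xi$ through the log-sum-exp yields
\[
\nabla_\xi V_{\rho,\theta}^*(s)=\SumInt_{a\in\calA}\pi_{\rho,\theta}(a|s)\,\nabla_\xi Q_{\rho,\theta}^*(s,a)=\E_{a\sim\pi_{\rho,\theta}(\cdot|s)}\!\left[\nabla_\xi Q_{\rho,\theta}^*(s,a)\right],
\]
so that $\E_{a\sim\pi_{\rho,\theta}(\cdot|s)}[\nabla_\xi\log\pi_{\rho,\theta}(a|s)]=0$ for every state $s$ and every $(\rho,\theta)$. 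Equivalently, this is the elementary identity $\E_{a\sim\pi}[\nabla\log\pi(a)]=\nabla\SumInt_{a}\pi(a)=\nabla 1=0$ applied statewise.

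Granting for the moment that $\nabla_\xi$ may be passed inside both the expectation $\E_{\tau\sim\pi_{\Bar\rho,\Bar\theta}}$ and the infinite sum, I would write
\[
\nabla_\xi\mathcal{L}(\rho,\theta)=\E_{\tau\sim\pi_{\Bar\rho,\Bar\theta}}\!\left[\sum_{t=0}^\infty\gamma^t\,\nabla_\xi\log\pi_{\rho,\theta}(a_t|s_t)\right],
\]
and then evaluate at $(\rho,\theta)=(\Bar\rho,\Bar\theta)$. Conditioning each summand on $s_t$ and using that the trajectory is generated by the client's policy, so that $a_t\mid s_t\sim\pi_{\Bar\rho,\Bar\theta}(\cdot|s_t)$, the statewise zero-mean identity (now taken at the true parameter) gives $\E[\nabla_\xi\log\pi_{\Bar\rho,\Bar\theta}(a_t|s_t)\mid s_t]=0$ for each $t$. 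By the tower property every term in the sum vanishes irrespective of the weights $\gamma^t$, whence $\nabla_\xi\mathcal{L}|_{(\Bar\rho,\Bar\theta)}=0$ for $\xi\in\{\theta,\rho\}$. The advertised independence of $\gamma$ is immediate, since the argument is term-by-term.

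The main obstacle is the analytic justification rather than the algebra: I must establish that $Q_{\rho,\theta}^*$ is differentiable in $(\rho,\theta)$ with gradients controlled uniformly enough to license both interchanges via dominated convergence. For differentiability I would regard $Q_{\rho,\theta}^*$ as the fixed point of the soft Bellman operator and apply the implicit function theorem: since that operator is a $\rho$-contraction, $I$ minus its derivative with respect to $Q$ is boundedly invertible, and $\nabla_\xi Q^*$ is the unique solution of the resulting linear fixed-point equation. Boundedness of $R\in[0,1]$ together with $\rho,\gamma\in(0,1)$ then furnishes uniform bounds on $\nabla_\xi Q^*$, hence on the score $\nabla_\xi\log\pi_{\rho,\theta}$, supplying the dominating function needed for the interchange. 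I expect the $\rho$-derivative to be the delicate part, because $\rho$ both multiplies the discounted sum defining $Q^*$ and enters the contraction modulus itself, so controlling $\nabla_\rho Q^*$ will require a careful geometric-series estimate; the $\theta$-derivative, entering only through the smooth map $\theta\mapsto U_\theta$, should be comparatively routine.
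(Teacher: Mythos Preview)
Your argument is correct and takes a genuinely different, more elementary route than the paper. You recognise that $\nabla_\xi\log\pi_{\rho,\theta}(a|s)$ is a score function and apply the classical identity $\E_{a\sim\pi_{\Bar\rho,\Bar\theta}(\cdot|s)}[\nabla_\xi\log\pi_{\Bar\rho,\Bar\theta}(a|s)]=0$ term by term after conditioning on $s_t$; this immediately kills every summand regardless of the weights $\gamma^t$. The paper instead rewrites $\mathcal{L}$ via the soft Bellman equation as a three-part decomposition (equation \eqref{equ:likelihood_gradient_1.5}), derives recursive expressions for $\nabla_\xi Q$ and $\nabla_\xi V$ by unrolling the Bellman recursion, and then shows by an explicit double-sum manipulation that the resulting gradient formulas \eqref{equ:likelihood_gradient_6}--\eqref{equ:likelihood_gradient_7} vanish at $(\Bar\rho,\Bar\theta)$ through telescoping of $\sum_{k\ge1}(\gamma^k-\rho^k)/(\gamma-\rho)$. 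Your route is shorter and conceptually cleaner for the proposition as stated; the paper's route is longer but has the side benefit of producing the explicit gradient expressions that are then reused both in the Hessian computation of Theorem \ref{lemma:landscape} and as the update rule in Algorithm \ref{alg:ML} (line \ref{line:gradient}). Your attention to the analytic justification (differentiability of $Q^*$ via the implicit function theorem and dominated convergence for the interchange) is also more explicit than what the paper provides.
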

Proposition \ref{lemma:gradient_L} suggests that the gradient of the likelihood function equals zero at the client's preference parameter value $(\Bar{\rho},\Bar{\theta})$, and hence $(\Bar{\rho},\Bar{\theta})$ is a stationary point of the likelihood function $\mathcal{L}(\rho,\theta)$.
\begin{proof}
Our proof can be divided into three steps. We first provide some useful formulas regarding the first-order and second-order derivatives of $Q,V$ with respect to $(\rho,\theta)$. With such formulas, we next derive the derivatives of the log-likelihood function. Finally, we show that \eqref{eq:first-order} holds.
\vspace{10pt}

\noindent\underline{Step 1.} To begin with, for any {$(\rho,\theta)\in \Theta$}, for any $(s_t,a_t)\in\calS\times\calA$,
\begin{align}
\nabla_\theta Q_{\rho,\theta}(s_t,a_t) 
&= \nabla_\theta U_\theta(R(s_t,a_t))+\rho \E_{s_{t+1}\sim P(\cdot|s_t,a_t)}\Big[\nabla_\theta V_{\rho,\theta}(s_{t+1}) \Big]\nonumber\\
&= \nabla_\theta U_\theta(R(s_t,a_t))+\rho \E_{s_{t+1}\sim P(\cdot|s_t,a_t)}\Big[\nabla_\theta \log({\SumInt_a e^{Q_{\rho,\theta}(s_{t+1},a)}}  ) \Big]\label{equ:likelihood_gradient_0.1}\\
&= \nabla_\theta U_\theta(R(s_t,a_t))+\rho \E_{s_{t+1}\sim P(\cdot|s_t,a_t)}\Big[
\SumInt_a \pi_{\rho,\theta}(a|s_{t+1})  \nabla_\theta  Q_{\rho,\theta}(s_{t+1},a) \Big]\label{equ:likelihood_gradient_0.2}\\
&= \nabla_\theta U_\theta(R(s_t,a_t))+\rho \E_{s_{t+1}\sim P(\cdot|s_t,a_t),a_{t+1}\sim \pi_{\rho,\theta}(\cdot|s_{t+1})}\Big[ \nabla_\theta  Q_{\rho,\theta}(s_{t+1},a_{t+1}) \Big],\label{equ:likelihood_gradient_1}
\end{align}
where \eqref{equ:likelihood_gradient_0.1} holds by the soft Bellman equation and \eqref{equ:likelihood_gradient_0.2} holds  because $\pi_{\rho,\theta}$ is the optimal policy. 
Applying \eqref{equ:likelihood_gradient_1} recursively yields:
\begin{eqnarray}\label{eq:recursion_result}
\nabla_\theta Q_{\rho,\theta}(s_t,a_t) = \E_{\tau'\sim \pi_{\rho,\theta}}\Big[\sum_{k=t}^\infty \rho^{k-t}\nabla_\theta U_\theta(R(s_{k}',a_{k}'))\Big|s_t,a_t  \Big],
\end{eqnarray}    
where $\tau'=\{s'_k,a'_k\}_{k=0}^{\infty}$ denotes a trajectory following $\pi_{\rho,\theta}$. Similarly,
\begin{align}
\nabla_\rho Q_{\rho,\theta}(s_t,a_t) 
&= \nabla_\rho U_\theta(R(s_t,a_t))+\nabla_\rho (\rho \E_{s_{t+1}\sim P(\cdot|s_t,a_t)}\Big[ V_{\rho,\theta}(s_{t+1}) \Big])\label{equ:likelihood_gradient_0.25}\\
&=  \E_{s_{t+1}\sim P(\cdot|s_t,a_t)}\Big[ V_{\rho,\theta}(s_{t+1}) \Big] + \rho\E_{s_{t+1}\sim P(\cdot|s_t,a_t)}\Big[ \nabla_\rho \log({\SumInt_a e^{Q_{\rho,\theta}(s_{t+1},a)}}  )  \Big]\label{equ:likelihood_gradient_0.3}\\
&=  \E_{s_{t+1}\sim P(\cdot|s_t,a_t)}\Big[ V_{\rho,\theta}(s_{t+1}) \Big]+ \rho\E_{s_{t+1}\sim P(\cdot|s_t,a_t)}\Big[
\SumInt_a \pi_{\rho,\theta}(a|s_{t+1})  \nabla_\rho  Q_{\rho,\theta}(s_{t+1},a) \Big]\label{equ:likelihood_gradient_0.4}\\
&=  \E_{s_{t+1}\sim P(\cdot|s_t,a_t)}\Big[ V_{\rho,\theta}(s_{t+1}) \Big]+ \rho\E_{s_{t+1}\sim P(\cdot|s_t,a_t),\ a_{t+1}\sim \pi_{\rho,\theta}(\cdot|s_{t+1})}\Big[ \nabla_\rho  Q_{\rho,\theta}(s_{t+1},a_{t+1}) \Big],\label{equ:likelihood_gradient_0}
\end{align}
where \eqref{equ:likelihood_gradient_0.3} holds by the Bellman equation and \eqref{equ:likelihood_gradient_0.4} holds because $\pi_{\rho,\theta}$ is the optimal policy.  Applying \eqref{equ:likelihood_gradient_0} recursively yields:
\begin{eqnarray}\label{eq:recursion_result2}
\nabla_\rho Q_{\rho,\theta}(s_t,a_t) = \E_{\tau'\sim \pi_{\rho,\theta}}\Big[\sum_{k=t+1}^\infty \rho^{k-t-1}V_{\rho,\theta}(s'_{k})\Big|s_t,a_t  \Big].
\end{eqnarray}   
Furthermore, we have for any $s_t\in\calS$,
\begin{align}
&\nabla_\theta V_{\rho,\theta}(s_t) = \nabla_\theta (\log \SumInt_a e^{Q_{\rho,\theta}(s_t,a)})\nonumber\\
&= \E_{a_t\sim\pi_{\rho,\theta}(\cdot|s_t)}\Big[\nabla_\theta {Q_{\rho,\theta}(s_t,a_t)}\Big]= \E_{\tau'\sim \pi_{\rho,\theta}}\Big[\sum_{k=t}^\infty \rho^{k-t}\nabla_\theta U_\theta(R(s_{k}',a_{k}'))\Big|s_t \Big],\label{equ:likelihood_gradient_3.5}
\end{align}
where the last equation holds by \eqref{eq:recursion_result}. In addition,
\begin{align}
&\nabla_\rho V_{\rho,\theta}(s_t)
= \nabla_\rho (\log \SumInt_a e^{Q_{\rho,\theta}(s_t,a)})\nonumber\\
&= \E_{a_t\sim\pi_{\rho,\theta}(\cdot|s_t)}\Big[\nabla_\rho {Q_{\rho,\theta}(s_t,a_t)}\Big]= \E_{\tau'\sim \pi_{\rho,\theta}}\Big[\sum_{k=t+1}^\infty \rho^{k-t-1}V_{\rho,\theta}(s_{k}')\Big|s_t \Big],\label{equ:likelihood_gradient_3.6}
\end{align}
where the last equation holds by \eqref{eq:recursion_result2}.

In summary, it holds that for any $(\rho,\theta)\in\Theta,s_t\in\calS,a_t\in\calA$, 
\begin{align}
\nabla_\theta Q_{\rho,\theta}(s_t,a_t) &= E_{\tau'\sim \pi_{\rho,\theta}}\Big[\sum_{k=t}^\infty \rho^{k-t}\nabla_\theta U_\theta(R(s_{k}',a_{k}'))\Big|s_t,a_t  \Big],\label{equ:gradient_Q_theta}\\
\nabla_\rho Q_{\rho,\theta}(s_t,a_t) &= E_{\tau'\sim \pi_{\rho,\theta}}\Big[\sum_{k=t+1}^\infty \rho^{k-t-1}V_{\rho,\theta}(s_{k}')\Big|s_t,a_t  \Big],\label{equ:gradient_Q_eho}\\
\nabla_\theta V_{\rho,\theta}(s_t) &=\E_{\tau'\sim \pi_{\rho,\theta}}\Big[\sum_{k=t}^\infty \rho^{k-t}\nabla_\theta U_\theta(R(s_{k}',a_{k}'))\Big|s_t \Big],\label{equ:gradient_V_theta}\\
\nabla_\rho V_{\rho,\theta}(s_t) &= \E_{\tau'\sim \pi_{\rho,\theta}}\Big[\sum_{k=t+1}^\infty \rho^{k-t-1}V_{\rho,\theta}(s_{k}')\Big|s_t \Big].\label{equ:gradient_V_rho}
\end{align}  

\noindent \underline{Step 2.} Next, we derive the gradients of the log-likelihood function. For any $(\rho,\theta)\in\Theta$,
\begin{align}
\mathcal{L}(\rho,\theta) &= \E_{\tau\sim\pi_{\Bar{\rho},\Bar{\theta}}}\Big[\sum_{t=0}^\infty \gamma^t \log \pi_{\rho,\theta}(a_t|s_t)\Big]
=\E_{\tau\sim\pi_{\Bar{\rho},\Bar{\theta}}}\Big[\sum_{t=0}^\infty \gamma^t \log \frac{e^{Q_{\rho,\theta}(s_t,a_t)}}{\SumInt_a e^{Q_{\rho,\theta}(s_t,a)}}\Big]\label{equ:likelihood_gradient_4.5}\\
&=\E_{\tau\sim\pi_{\Bar{\rho},\Bar{\theta}}}\Big[\sum_{t=0}^\infty \gamma^t \Big(Q_{\rho,\theta}(s_t,a_t) -V_{\rho,\theta}(s_t)\Big)\Big]\nonumber\\
&= \E_{\tau\sim\pi_{\Bar{\rho},\Bar{\theta}}}\Big[\sum_{t=0}^\infty \gamma^t U_\theta(R(s_t,a_t))\Big]-\E_{s_0\sim\mu(\cdot)}\Big[V_{\rho,\theta}(s_0)\Big]+(\rho-\gamma)\E_{\tau\sim\pi_{\Bar{\rho},\Bar{\theta}}}\Big[\sum_{t=1}^\infty \gamma^{t-1} V_{\rho,\theta}(s_{t})\Big],\label{equ:likelihood_gradient_1.5} 
\end{align}
where $\mu$ is the distribution of the initial state $s_0$. \eqref{equ:likelihood_gradient_4.5} holds by the optimality of the policy, and \eqref{equ:likelihood_gradient_1.5} holds by the soft Bellman equation. Taking the gradient of \eqref{equ:likelihood_gradient_1.5} with respect to $\theta$ gives
\begin{align}
\nabla_\theta \mathcal{L}(\rho,\theta)& =    \E_{\tau\sim\pi_{\Bar{\rho},\Bar{\theta}}}\Big[\sum_{t=0}^\infty \gamma^t \nabla_\theta U_\theta(R(s_t,a_t))\Big] - \E_{s_0\sim\mu(\cdot)}\Big[\nabla_\theta V_{\rho,\theta}(s_0)\Big]\nonumber\\
&\qquad+(\rho-\gamma)\E_{\tau\sim\pi_{\Bar{\rho},\Bar{\theta}}}\Big[\sum_{t=1}^\infty \gamma^{t-1} \nabla_\theta V_{\rho,\theta}(s_{t})\Big].\label{equ:likelihood_gradient_3} 
\end{align}
Combining \eqref{equ:likelihood_gradient_3} with \eqref{equ:gradient_V_theta} gives:
\begin{align}
\nabla_\theta \mathcal{L}(\rho,\theta) &= \E_{\tau\sim\pi_{\Bar{\rho},\Bar{\theta}}}\Big[\sum_{t=0}^\infty \gamma^t \nabla_\theta U_\theta(R(s_t,a_t)) \Big]-\E_{\tau\sim\pi_{\rho,\theta}}\Big[\sum_{t=0}^\infty {\rho}^t \nabla_\theta U_\theta(R(s_t,a_t)) \Big]\nonumber\\
&\qquad+ (\rho-\gamma) \E_{\tau\sim\pi_{\Bar{\rho},\Bar{\theta}}}\Big[\sum_{t=1}^\infty \gamma^{t-1}  
\E_{\tau'\sim\pi_{\rho,\theta}}\Big[\sum_{k=t}^\infty \rho^{k-t} \nabla_\theta U_\theta(R(s'_{k},a'_{k}))\,\Big|\,s_t\Big]\Big].\label{equ:likelihood_gradient_6}
\end{align}
Similarly, taking the gradient of \eqref{equ:likelihood_gradient_1.5} with respect to $\rho$ gives 
\begin{align}
\nabla_\rho \mathcal{L}(\rho,\theta)  
&= - \E_{s_0\sim\mu(\cdot)}\Big[\nabla_\rho V_{\rho,\theta}(s_0)\Big]
+ \E_{\tau\sim\pi_{\Bar{\rho},\Bar{\theta}}}\Big[\sum_{t=1}^\infty \gamma^{t-1} V_{\rho,\theta}(s_t)\Big]\nonumber\\
&\qquad +(\rho-\gamma)\E_{\tau\sim\pi_{\Bar{\rho},\Bar{\theta}}}\Big[\sum_{t=1}^\infty \gamma^{t-1} \nabla_\rho V_{\rho,\theta}(s_t)\Big].\label{equ:likelihood_gradient_5}
\end{align}
Combining \eqref{equ:likelihood_gradient_5} with \eqref{equ:gradient_V_rho} yields:
\begin{align}
\nabla_\rho \mathcal{L}(\rho,\theta)  &={\E_{\tau\sim\pi_{\Bar{\rho},\Bar{\theta}}}\Big[\sum_{t=1}^\infty \gamma^{t-1} V_{\rho,\theta}(s_t)\Big]} -\mathbb{E}_{\tau\sim \pi_{\rho,\theta}}\Big[\sum_{t=1}^\infty \rho^{t-1}V_{\rho,\theta}(s_{t}) \Big]  \nonumber\\
&\qquad+(\rho-\gamma)\E_{\tau\sim\pi_{\Bar{\rho},\Bar{\theta}}}\Big[\sum_{t=1}^\infty \gamma^{t-1} \E_{\tau'\sim \pi_{\rho,\theta}}\Big[\sum_{k=t+1}^\infty \rho^{k-t-1}V_{\rho,\theta}(s'_{k})\,\Big|\, s_t  \Big]\Big].\label{equ:likelihood_gradient_7}
\end{align}

\noindent \underline{Step 3.} Finally, when $(\rho,\theta) =(\Bar{\rho},\Bar{\theta})$, by \eqref{equ:likelihood_gradient_6} we have
\begin{eqnarray}
    \nabla_\theta \mathcal{L}(\rho,\theta)|_{(\rho,\theta) =(\Bar{\rho},\Bar{\theta})} &= \E_{\tau\sim\pi_{\Bar{\rho},\Bar{\theta}}}\Big[\sum_{t=0}^\infty \gamma^t \nabla_\theta U_\theta(R(s_t,a_t)) \Big]-\E_{\tau\sim\pi_{\Bar{\rho},\Bar{\theta}}}\Big[\sum_{t=0}^\infty {{\rho}^t} \nabla_\theta U_\theta(R(s_t,a_t)) \Big]\nonumber\\
&\qquad+ (\rho-\gamma) \E_{\tau\sim\pi_{\Bar{\rho},\Bar{\theta}}}\Big[\sum_{t=1}^\infty \gamma^{t-1}  
\sum_{k=t}^\infty \rho^{k-t} \nabla_\theta U_\theta(R(s_{k},a_{k}))\Big].\label{eq:grad0}
\end{eqnarray}
Note that for the last line of the above equation,
\begin{eqnarray}
&&\E_{\tau\sim\pi_{\Bar{\rho},\Bar{\theta}}}\Big[\sum_{t=1}^\infty \gamma^{t-1}  
\sum_{k=t}^\infty \rho^{k-t} \nabla_\theta U_\theta(R(s_{k},a_{k}))\Big]= \E_{\tau\sim\pi_{\Bar{\rho},\Bar{\theta}}}\Big[\sum_{t=1}^\infty \left(\frac{\gamma}{\rho}\right)^{t-1}  
\sum_{k=t}^\infty \rho^{k-1} \nabla_\theta U_\theta(R(s_{k},a_{k}))\Big]\nonumber\\
&=&   \E_{\tau\sim\pi_{\Bar{\rho},\Bar{\theta}}}\Big[\sum_{k=1}^\infty\sum_{t=1}^{k} \left(\frac{\gamma}{\rho}\right)^{t-1}  
 \rho^{k-1} \nabla_\theta U_\theta(R(s_{k},a_{k}))\Big]\label{eq:inter1}\\
&=&\E_{\tau\sim\pi_{\Bar{\rho},\Bar{\theta}}}\Big[\sum_{k=1}^\infty\frac{(\gamma/\rho)^{k}-1}{\gamma/\rho-1} 
 \rho^{k-1} \nabla_\theta U_\theta(R(s_{k},a_{k}))\Big] = \E_{\tau\sim\pi_{\Bar{\rho},\Bar{\theta}}}\Big[\sum_{k=1}^\infty\frac{\gamma^{k}-\rho^{k}}{\gamma-\rho} 
 \nabla_\theta U_\theta(R(s_{k},a_{k}))\Big],\label{eq:inter2}
\end{eqnarray}
where \eqref{eq:inter1} holds by changing the order of summations. Plugging \eqref{eq:inter2} back into \eqref{eq:grad0}, we have the desired result that $ \nabla_\theta \mathcal{L}(\rho,\theta)|_{(\rho,\theta) = (\Bar{\rho},\Bar{\theta})} = 0$.

Similarly we have $ \nabla_\rho \mathcal{L}(\rho,\theta)|_{(\rho,\theta) = (\Bar{\rho},\Bar{\theta})} = 0$.
\end{proof}

We next show results on the Hessian matrix.
\begin{theorem}[Landscape analysis]\label{lemma:landscape}
It holds that
\begin{eqnarray*}
\nabla^2_\theta \mathcal{L}(\rho,\theta){|_{(\rho,\theta) = (\Bar{\rho},\Bar{\theta})}} 
&=& - \E_{\tau\sim\pi_{\Bar{\rho},\Bar{\theta}}}\Big[\sum_{t=0}^\infty\gamma^{t}
\mathbb{V}_{a\sim\pi_{\Bar{\rho},\Bar{\theta}}(\cdot|s_{t})}\Big[\nabla_\theta Q_{\rho,\theta}(s_{t},a) \Big] \Big],\\
\nabla^2_\rho \mathcal{L}(\rho,\theta)  {|_{(\rho,\theta) = (\Bar{\rho},\Bar{\theta})}} 
&=& -  \E_{\tau\sim\pi_{\Bar{\rho},\Bar{\theta}}}\Big[\sum_{t=0}^\infty \gamma^t \mathbb{V}_{a\sim\pi_{\rho,\theta}(\cdot|s_{t})}\Big[\nabla_\rho Q_{\rho,\theta}(s_{t},a)\Big]\Big],\\
\nabla_\theta\nabla_\rho \mathcal{L}(\rho,\theta){|_{(\rho,\theta) = (\Bar{\rho},\Bar{\theta})}} 
&=& -\E_{\tau\sim\pi_{\Bar{\rho},\Bar{\theta}}}\Big[\sum_{t=0}^\infty\gamma^t{\rm \bf Cov}_{a\sim\pi_{\Bar{\rho},\Bar{\theta}}(\cdot|s_{t})}\Big[\nabla_\theta Q_{\rho,\theta}(s_{t},a),\nabla_\rho Q_{\rho,\theta}(s_{t},a)\Big]\Big],
\end{eqnarray*}
in which we define
\begin{align}
\mathbb{V}_{a\sim\pi_{\rho,\theta}(\cdot|s_t)}\Big[\nabla_\theta Q_{\rho,\theta}(s_t,a)\Big]
& :=\E_{a\sim\pi_{\rho,\theta}(\cdot|s_t)}\Big[\nabla_\theta Q_{\rho,\theta}(s_t,a)\nabla_\theta Q_{\rho,\theta}(s_t,a)^\top\Big]\nonumber\\
&\qquad- \E_{a\sim\pi_{\rho,\theta}(\cdot|s_t)}\Big[ \nabla_\theta Q_{\rho,\theta}(s_t,a)\Big]\E_{a\sim\pi_{\rho,\theta}(\cdot|s_t)}\Big[ \nabla_\theta Q_{\rho,\theta}(s_t,a)\Big]^\top,\label{equ:definition_of_variance}
\end{align}
and
\begin{align}
&{\rm \bf Cov}_{a\sim\pi_{\rho,\theta}(\cdot|s_{t+1})}\Big[\nabla_\theta Q_{\rho,\theta}(s_{t+1},a),\nabla_\rho Q_{\rho,\theta}(s_{t+1},a)\Big]\nonumber\\
&:=\E_{a\sim\pi_{\rho,\theta}(\cdot|s_t)}\Big[\nabla_\theta Q_{\rho,\theta}(s_t,a)\nabla_\rho Q_{\rho,\theta}(s_t,a)\Big]\nonumber
\\
&\qquad \qquad - \E_{a\sim\pi_{\rho,\theta}(\cdot|s_t)}\Big[ \nabla_\theta Q_{\rho,\theta}(s_t,a)\Big]\E_{a\sim\pi_{\rho,\theta}(\cdot|s_t)}\Big[ \nabla_\rho Q_{\rho,\theta}(s_t,a)\Big]\in\mathbb{R}^d.\label{equ:definition_of_covariance}
\end{align}

In addition,
\begin{eqnarray*}
\mathcal H(\Bar{\rho},\Bar{\theta})&&:=
\begin{pmatrix}
    \nabla^2_\theta \mathcal{L}(\rho,\theta) &  \nabla_\theta \nabla_\rho \mathcal{L}(\rho,\theta)\\
    \nabla_\theta \nabla_\rho \mathcal{L}(\rho,\theta)^\top &  \nabla^2_\rho \mathcal{L}(\rho,\theta)
\end{pmatrix}{\Bigg|_{(\rho,\theta) = (\Bar{\rho},\Bar{\theta})} }
\end{eqnarray*}
is negative semi-definite. 
\end{theorem}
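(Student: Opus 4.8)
The plan is to start from the compact form of the log-likelihood derived in the proof of Proposition \ref{lemma:gradient_L}, namely
\[\mathcal{L}(\rho,\theta)=\E_{\tau\sim\pi_{\bar\rho,\bar\theta}}\Big[\sum_{t=0}^\infty\gamma^t\big(Q_{\rho,\theta}(s_t,a_t)-V_{\rho,\theta}(s_t)\big)\Big],\]
which uses $\log\pi_{\rho,\theta}(a_t|s_t)=Q_{\rho,\theta}(s_t,a_t)-V_{\rho,\theta}(s_t)$. The structural fact I would lean on is that the sampling distribution $\pi_{\bar\rho,\bar\theta}$ is independent of $(\rho,\theta)$, so $(\rho,\theta)$ enters only through the integrand $\log\pi_{\rho,\theta}$; consequently (modulo the interchange of derivative, sum, and expectation discussed at the end) every Hessian block of $\mathcal{L}$ is the discounted expectation of the corresponding parameter Hessian of the scalar $\log\pi_{\rho,\theta}(a_t|s_t)$. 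The whole problem thus reduces to computing $\nabla^2\log\pi_{\rho,\theta}(a|s)$ and then exploiting the coincidence of the sampling policy with $\pi_{\rho,\theta}$ at $(\bar\rho,\bar\theta)$.

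Next I would compute that Hessian via the log-sum-exp identity $V_{\rho,\theta}(s)=\log\SumInt_a e^{Q_{\rho,\theta}(s,a)}$. A first differentiation recovers $\nabla_\theta V_{\rho,\theta}(s)=\E_{a\sim\pi_{\rho,\theta}}[\nabla_\theta Q_{\rho,\theta}(s,a)]$ (already used in Proposition \ref{lemma:gradient_L}); a second differentiation, combined with the softmax identity $\nabla_\theta\pi_{\rho,\theta}(a|s)=\pi_{\rho,\theta}(a|s)\big(\nabla_\theta Q_{\rho,\theta}(s,a)-\E_{a'\sim\pi_{\rho,\theta}}[\nabla_\theta Q_{\rho,\theta}(s,a')]\big)$, gives
\[\nabla^2_\theta V_{\rho,\theta}(s)=\E_{a\sim\pi_{\rho,\theta}}\big[\nabla^2_\theta Q_{\rho,\theta}(s,a)\big]+\mathbb{V}_{a\sim\pi_{\rho,\theta}}\big[\nabla_\theta Q_{\rho,\theta}(s,a)\big],\]
and the identical computation for the $\rho$ and the mixed $\theta$--$\rho$ directions yields the same structure with $\mathbb{V}$ replaced by $\mathbb{V}_{a\sim\pi_{\rho,\theta}}[\nabla_\rho Q]$ and by ${\rm \bf Cov}_{a\sim\pi_{\rho,\theta}}[\nabla_\theta Q,\nabla_\rho Q]$ respectively. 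Subtracting $\nabla^2 V$ from $\nabla^2 Q$ then gives
\[\nabla^2_\theta\log\pi_{\rho,\theta}(a|s)=\Big(\nabla^2_\theta Q_{\rho,\theta}(s,a)-\E_{a'\sim\pi_{\rho,\theta}}[\nabla^2_\theta Q_{\rho,\theta}(s,a')]\Big)-\mathbb{V}_{a\sim\pi_{\rho,\theta}}\big[\nabla_\theta Q_{\rho,\theta}(s,a)\big],\]
and analogously for the other two blocks.

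The key step — and the one I expect to require the most care to state cleanly — is the cancellation that takes place precisely at $(\rho,\theta)=(\bar\rho,\bar\theta)$. There the sampling policy equals $\pi_{\rho,\theta}$, so conditioning each summand on $s_t$ and invoking the tower property over $a_t\sim\pi_{\bar\rho,\bar\theta}(\cdot|s_t)=\pi_{\rho,\theta}(\cdot|s_t)$ gives $\E_\tau[\gamma^t\nabla^2_\theta Q(s_t,a_t)]=\E_\tau[\gamma^t\,\E_{a\sim\pi_{\rho,\theta}}[\nabla^2_\theta Q(s_t,a)]]$, which exactly annihilates the $\E[\nabla^2_\theta Q]$ term. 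Only the (co)variance survives, delivering the three stated identities for $\nabla^2_\theta\mathcal{L}$, $\nabla^2_\rho\mathcal{L}$, and $\nabla_\theta\nabla_\rho\mathcal{L}$ simultaneously. Note that this cancellation is exactly where evaluation at the \emph{true} parameter is essential; away from $(\bar\rho,\bar\theta)$ the bare Hessian-of-$Q$ terms would not drop out.

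Finally, to obtain negative semi-definiteness I would stack the score vectors into $G(s,a):=(\nabla_\theta Q_{\rho,\theta}(s,a);\,\nabla_\rho Q_{\rho,\theta}(s,a))\in\mathbb{R}^{d+1}$ and observe that the three blocks assemble into the single identity
\[\mathcal{H}(\bar\rho,\bar\theta)=-\E_{\tau\sim\pi_{\bar\rho,\bar\theta}}\Big[\sum_{t=0}^\infty\gamma^t\,\mathbb{V}_{a\sim\pi_{\bar\rho,\bar\theta}(\cdot|s_t)}\big[G(s_t,a)\big]\Big],\]
where $\mathbb{V}[G]$ denotes the full $(d+1)\times(d+1)$ conditional covariance matrix. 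For any fixed $v\in\mathbb{R}^{d+1}$ one has $v^\top\mathbb{V}[G]v=\mathbb{V}_{a\sim\pi_{\bar\rho,\bar\theta}}[v^\top G(s_t,a)]\ge0$, since the variance of a scalar random variable is nonnegative; as $\gamma^t>0$ and the outer expectation preserves nonnegativity, $v^\top\mathcal{H}(\bar\rho,\bar\theta)v\le0$, so $\mathcal{H}(\bar\rho,\bar\theta)\preceq0$. The one technical point left to discharge is the interchange of the parameter derivatives with the infinite discounted sum and the expectation, which I would justify using the geometric weight $\gamma\in(0,1)$, the bound $R\in[0,1]$, and uniform local bounds on the parameter derivatives of $U_\theta$.
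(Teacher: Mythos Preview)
Your argument is correct and is in fact considerably cleaner than the route taken in the paper. The paper first expands $\mathcal{L}$ via the soft Bellman equation into the form
\[
\mathcal{L}(\rho,\theta)=\E_{\tau}\Big[\sum_t\gamma^tU_\theta(R(s_t,a_t))\Big]-\E_{s_0}[V_{\rho,\theta}(s_0)]+(\rho-\gamma)\E_{\tau}\Big[\sum_{t\ge1}\gamma^{t-1}V_{\rho,\theta}(s_t)\Big],
\]
and then differentiates this twice. To close the computation it needs recursive expansions of $\nabla^2_\theta V$, $\nabla^2_\rho V$, $\nabla_\theta\nabla_\rho V$ as discounted trajectory sums, followed by a double-sum reordering (the $\sum_{k}\frac{\gamma^k-\rho^k}{\gamma-\rho}$ trick already used for the gradient) to obtain the cancellations term by term.

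You bypass all of that by staying with $\log\pi_{\rho,\theta}=Q_{\rho,\theta}-V_{\rho,\theta}$ and differentiating only the log-sum-exp once more to get
\[
\nabla^2_\theta\log\pi_{\rho,\theta}(a|s)=\big(\nabla^2_\theta Q-\E_{a'\sim\pi_{\rho,\theta}}[\nabla^2_\theta Q]\big)-\mathbb{V}_{a'\sim\pi_{\rho,\theta}}[\nabla_\theta Q],
\]
so that at $(\bar\rho,\bar\theta)$ the first bracket has conditional mean zero given $s_t$ under the sampling law, and only the covariance survives. This is exactly the standard ``Fisher information'' identity $\E_{a\sim\pi}[\nabla^2\log\pi(a|s)]=-\mathbb{V}_{a\sim\pi}[\nabla\log\pi(a|s)]$ applied state by state, which makes the negative semi-definiteness immediate via your stacking argument. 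What the paper's longer derivation buys is explicit formulas for $\nabla^2V$ and $\nabla^2Q$ at \emph{arbitrary} $(\rho,\theta)$ (not just at the truth), which may be useful elsewhere, but are not needed for the statement of the theorem. Your proposal is the more economical proof.
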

Theorem \ref{lemma:landscape} suggests that the log-liklihood function $\mathcal{L}(\rho,\theta)$ is concave near the client's preference parameter $(\Bar{\rho},\Bar{\theta})$. As mentioned earlier, an interesting finding is that the negative semi-definite property of the Hessian  {\it does not rely on}  the choice $\gamma$, making the likelihood estimation method particularly suitable for inference problems.

\begin{proof}
Our proof consists of two parts. We first derive formulas for the second-order derivatives of $Q,V$ with respect to $\theta$ and $\rho$. Then we calculate the second-order derivatives of the log-likelihood function and study its Hessian matrix when $(\rho,\theta) = (\Bar{\rho},\Bar{\theta})$.
 
\noindent \underline{Step 1.} To begin with, for any $(\rho,\theta)\in\Theta$ by taking the derivative of \eqref{equ:likelihood_gradient_3.5}, we have:
\begin{align}
\nabla_\theta^2V_{\rho,\theta}(s_t)  
&= \nabla_\theta \Big(\SumInt_a \pi_{\rho,\theta}(a|s_t) \nabla_\theta Q_{\rho,\theta}(s_t,a)^\top\Big)\nonumber\\
&=\SumInt_a \nabla_\theta \Big( e^{Q_{\rho,\theta}(s_t,a)-V_{\rho,\theta}(s_t) } \Big)\nabla_\theta Q_{\rho,\theta}(s_t,a)^\top
+\SumInt_a \pi_{\rho,\theta}(a|s_t) \nabla_\theta^2 Q_{\rho,\theta}(s_t,a) \label{equ:second_order_derivative_1}\\
&= \E_{a\sim\pi_{\rho,\theta}(\cdot|s_t)}\Big[\nabla_\theta Q_{\rho,\theta}(s_t,a)\nabla_\theta Q_{\rho,\theta}(s_t,a)^\top\Big]\nonumber\\
&\qquad- \E_{a\sim\pi_{\rho,\theta}(\cdot|s_t)}\Big[ \nabla_\theta Q_{\rho,\theta}(s_t,a)\Big]\E_{a\sim\pi_{\rho,\theta}(\cdot|s_t)}\Big[ \nabla_\theta Q_{\rho,\theta}(s_t,a)\Big]^\top
+\E_{a\sim\pi_{\rho,\theta}(\cdot|s_t)}\Big[\nabla_\theta^2 Q_{\rho,\theta}(s_t,a)\Big]\label{equ:second_order_derivative_2}\\
&= \mathbb{V}_{a\sim\pi_{\rho,\theta}(\cdot|s_t)}\Big[\nabla_\theta Q_{\rho,\theta}(s_t,a)\Big]
+\E_{a\sim\pi_{\rho,\theta}(\cdot|s_t)}\Big[\nabla_\theta^2 Q_{\rho,\theta}(s_t,a)\Big],\label{equ:second_order_derivative_2.5}
\end{align}
where the covariance matrix $\mathbb{V}_{a\sim\pi_{\rho,\theta}(\cdot|s_t)}\Big[\nabla_\theta Q_{\rho,\theta}(s_t,a)\Big]$ is defined in \eqref{equ:definition_of_variance}.
In particular, \eqref{equ:second_order_derivative_1} holds because:
\begin{align*}
\pi_{\rho,\theta}(a|s_t) = \frac{e^{Q_{\rho,\theta}(s_t,a)}}{\SumInt_{a'}e^{Q_{\rho,\theta}(s,a')}} = e^{Q_{\rho,\theta}(s_t,a)-V_{\rho,\theta}(s_t) },
\end{align*}
and 
\eqref{equ:second_order_derivative_2} holds by \eqref{equ:likelihood_gradient_3.5}.
In addition, we have:
\begin{align}
\nabla^2_\theta Q_{\rho,\theta}(s_t,a_t) 
&= \nabla_\theta^2 U_\theta (R(s_t,a_t)) + \rho \E_{s_{t+1}\sim P(\cdot|s_t,a_t)}\Big[\nabla_\theta^2 V_{\rho,\theta}(s_{t+1})\Big]\nonumber\\
&= \nabla_\theta^2 U_\theta (R(s_t,a_t))
+\rho\E_{s_{t+1}\sim P(\cdot|s_t,a_t)}\Big[\mathbb{V}_{a_{t+1}\sim\pi_{\rho,\theta}(\cdot|s_{t+1})}\Big[\nabla_\theta Q_{\rho,\theta}(s_{t+1},a_{t+1})\Big]\Big]\nonumber\\
&\qquad
+\rho \E_{\tau\sim\pi_{\rho,\theta}}\Big[\nabla_\theta^2 Q_{\rho,\theta}(s_{t+1},a_{t+1})\Big|s_t,a_t\Big],\label{equ:second_order_derivative_3}
\end{align}
where \eqref{equ:second_order_derivative_3} holds by \eqref{equ:second_order_derivative_2.5}. Applying \eqref{equ:second_order_derivative_3} recursively yields:
\begin{align}
\nabla^2_\theta Q_{\rho,\theta}(s_t,a_t) 
& = \E_{\tau'\sim\pi_{\rho,\theta}}\Big[\sum_{k=t}^\infty \rho^{k-t}\nabla_\theta^2 U_\theta (R(s_{k}',a_{k}'))\Big|s_t,a_t \Big]\nonumber\\
&\qquad+ \E_{\tau'\sim\pi_{\rho,\theta}}\Big[\sum_{k=t}^\infty \rho^{k-t+1} \mathbb{V}_{a\sim\pi_{\rho,\theta}(\cdot|s_{k+1}')}\Big[\nabla_\theta Q_{\rho,\theta}(s_{k+1}',a)\Big]\Big|s_t,a_t\Big].\label{equ:second_order_derivative_2.6}
\end{align}
Similarly, for any $\rho\in(0,1)$, $\theta\in\Theta$, by taking the gradient  of \eqref{equ:likelihood_gradient_3.6} with respect to $\rho$, we have
\begin{align}
\nabla_\rho^2V_{\rho,\theta}(s_t)  
&= \nabla_\rho \Big(\SumInt_a \pi_{\rho,\theta}(a|s_t) \nabla_\rho Q_{\rho,\theta}(s_t,a)\Big)\nonumber\\
&=\SumInt_a \nabla_\rho \Big( e^{Q_{\rho,\theta}(s_t,a)-V_{\rho,\theta}(s_t) } \Big)\nabla_\rho Q_{\rho,\theta}(s_t,a)
+\SumInt_a \pi_{\rho,\theta}(a|s_t) \nabla_\rho^2 Q_{\rho,\theta}(s_t,a) \nonumber\\
&= \E_{a\sim\pi_{\rho,\theta}(\cdot|s_t)}\Big[(\nabla_\rho Q_{\rho,\theta}(s_t,a))^2\Big]
- \E_{a\sim\pi_{\rho,\theta}(\cdot|s_t)}\Big[ \nabla_\rho Q_{\rho,\theta}(s_t,a)\Big]^2
+\E_{a\sim\pi_{\rho,\theta}(\cdot|s_t)}\Big[\nabla_\rho^2 Q_{\rho,\theta}(s_t,a)\Big]\nonumber\\
&= \mathbb{V}_{a\sim\pi_{\rho,\theta}(\cdot|s_t)}\Big[\nabla_\rho Q_{\rho,\theta}(s_t,a)\Big]
+\E_{a\sim\pi_{\rho,\theta}(\cdot|s_t)}\Big[\nabla_\rho^2 Q_{\rho,\theta}(s_t,a)\Big].
\label{equ:second_order_derivative_2.7}
\end{align}
Similarly, by taking the gradient of \eqref{equ:likelihood_gradient_0.25},
\begin{align}
\nabla_\rho^2 Q_{\rho,\theta}(s_t,a_t)
&= \nabla_\rho\Big(\E_{s_{t+1}\sim P(\cdot|s_t,a_t)}\Big[ V_{\rho,\theta}(s_{t+1})\Big] 
+\rho \E_{s_{t+1}\sim P(\cdot|s_t,a_t)}\Big[ \nabla_\rho V_{\rho,\theta}(s_{t+1})\Big]\Big)\nonumber\\
&= 2\E_{s_{t+1}\sim P(\cdot|s_t,a_t)}\Big[ \nabla_\rho V_{\rho,\theta}(s_{t+1})\Big] 
+\rho \E_{s_{t+1}\sim P(\cdot|s_t,a_t)}\Big[ \nabla_\rho^2 V_{\rho,\theta}(s_{t+1})\Big]\nonumber\\
& = 2\E_{\tau\sim\pi_{\rho,\theta}}\Big[ \nabla_\rho Q_{\rho,\theta}(s_{t+1},a_{t+1})\Big|s_t,a_t\Big] 
+\rho \E_{s_{t+1}}\Big[\mathbb{V}_{a\sim\pi_{\rho,\theta}(\cdot|s_{t+1})}\Big[\nabla_\rho Q_{\rho,\theta}(s_{t+1},a)\Big]\Big] \nonumber\\
&\qquad+\rho \E_{a_{t+1}\sim\pi_{\rho,\theta}(\cdot|s_t)}\Big[\nabla_\rho^2 Q_{\rho,\theta}(s_{t+1},a_{t+1})\Big|s_t,a_t\Big].
\label{equ:second_order_derivative_6}
\end{align}
Applying \eqref{equ:second_order_derivative_6} recursively yields:
\begin{align}
\nabla_\rho^2 Q_{\rho,\theta}(s_t,a_t)  
&= 2\E_{\tau'\sim\pi_{\rho,\theta}}\Big[ \sum_{k=t}^\infty \rho^{k-t} \nabla_\rho Q_{\rho,\theta}(s_{k+1}',a_{k+1}')\Big|s_t,a_t\Big] \nonumber\\
&\qquad+\E_{\tau'\sim\pi_{\rho,\theta}}
\Big[\sum_{k=t}^\infty \rho^{k-t+1}\mathbb{V}_{a\sim\pi_{\rho,\theta}(\cdot|s_{k+1}')}\Big[\nabla_\rho Q_{\rho,\theta}(s_{k+1}',a)\Big]\Big|s_t,a_t\Big].\label{equ:second_order_derivative_2.8}
\end{align}
Furthermore, for any $(\rho,\theta)\in\Theta$, by taking the gradient of \eqref{equ:likelihood_gradient_3.6} with respect to $\theta$ , we have
\begin{align}
&\nabla_\theta\nabla_\rho V_{\rho,\theta}(s_t)  
= \nabla_\theta \Big(\SumInt_a \pi_{\rho,\theta}(a|s_t) \nabla_\rho Q_{\rho,\theta}(s_t,a)\Big)\nonumber\\
&=\SumInt_a \nabla_\theta \Big( e^{Q_{\rho,\theta}(s_t,a)-V_{\rho,\theta}(s_t) } \Big)\nabla_\rho Q_{\rho,\theta}(s_t,a)
+\SumInt_a \pi_{\rho,\theta}(a|s_t) \nabla_\theta\nabla_\rho Q_{\rho,\theta}(s_t,a) \nonumber\\
&= \E_{a\sim\pi_{\rho,\theta}(\cdot|s_t)}\Big[\nabla_\theta Q_{\rho,\theta}(s_t,a)\nabla_\rho Q_{\rho,\theta}(s_t,a)\Big]
- \E_{a\sim\pi_{\rho,\theta}(\cdot|s_t)}\Big[ \nabla_\theta Q_{\rho,\theta}(s_t,a)\Big]\E_{a\sim\pi_{\rho,\theta}(\cdot|s_t)}\Big[ \nabla_\rho Q_{\rho,\theta}(s_t,a)\Big]\nonumber\\
&\qquad+\E_{a\sim\pi_{\rho,\theta}(\cdot|s_t)}\Big[\nabla_\theta\nabla_\rho Q_{\rho,\theta}(s_t,a)\Big]\nonumber\\
& = {\rm \bf Cov}_{a\sim\pi_{\rho,\theta}(\cdot|s_{t+1})}\Big[\nabla_\theta Q_{\rho,\theta}(s_{t+1},a),\nabla_\rho Q_{\rho,\theta}(s_{t+1},a)\Big]\nonumber\\
&\qquad+\E_{a\sim\pi_{\rho,\theta}(\cdot|s_t)}\Big[\nabla_\theta\nabla_\rho Q_{\rho,\theta}(s_t,a)\Big],\label{equ:second_order_derivative_2.9}
\end{align}
where the ``covariance'' between $\nabla_\theta Q$ and $\nabla_\rho Q$ is defined in \eqref{equ:definition_of_covariance}.
Note that ${\rm \bf Cov}_{a\sim\pi_{\rho,\theta}(\cdot|s_{t+1})}\Big[\nabla_\theta Q_{\rho,\theta}(s_{t+1},a),\nabla_\rho Q_{\rho,\theta}(s_{t+1},a)\Big] \in \mathbb{R}^d$, as we have $\theta\in \mathbb{R}^d$ and $\rho \in \mathbb{R}$.

Lastly, by taking the gradient of \eqref{equ:likelihood_gradient_0.25}, we obtain
\begin{align*}
&\nabla_\theta\nabla_\rho Q_{\rho,\theta}(s_t,a_t)\\
&= \nabla_\theta\Big(\E_{s_{t+1}\sim P(\cdot|s_t,a_t)}\Big[ V_{\rho,\theta}(s_{t+1})\Big] 
+\rho \E_{s_{t+1}\sim P(\cdot|s_t,a_t)}\Big[ \nabla_\rho V_{\rho,\theta}(s_{t+1})\Big]\Big)\\
&= \E_{s_{t+1}\sim P(\cdot|s_t,a_t)}\Big[ \nabla_\theta V_{\rho,\theta}(s_{t+1})\Big] 
+\rho \E_{s_{t+1}\sim P(\cdot|s_t,a_t)}\Big[ \nabla_\theta\nabla_\rho V_{\rho,\theta}(s_{t+1})\Big]\\
& = \E_{s_{t+1}\sim P(\cdot|s_t,a_t)}\Big[ \nabla_\theta V_{\rho,\theta}(s_{t+1})\Big]
+ \rho \E_{s_{t+1}\sim P(\cdot|s_t,a_t)}\Big[ {\rm \bf Cov}_{a\sim\pi_{\rho,\theta}(\cdot|s_{t+1})}\Big[\nabla_\theta Q_{\rho,\theta}(s_{t+1},a),\nabla_\rho Q_{\rho,\theta}(s_{t+1},a)\Big]\Big]\\
&\qquad+ \rho \E_{\tau\sim\pi_{\rho,\theta}}\Big[\nabla_\theta\nabla_\rho Q_{\rho,\theta}(s_{t+1},a)\Big|s_t,a_t\Big].
\end{align*}
Applying the last equation recursively yields:
\begin{align}
\nabla_\theta\nabla_\rho Q_{\rho,\theta}(s_t,a_t)
&=\E_{\tau'\sim\pi_{\rho,\theta}}\Big[ \sum_{k=t}^\infty \rho^{k-t}\nabla_\theta V_{\rho,\theta}(s_{k+1}')\Big|s_t,a_t\Big]\nonumber\\
&\qquad+ \E_{\tau'\sim\pi_{\rho,\theta}}\Big[ \sum_{k=t}^\infty\rho^{k-t+1}{\rm \bf Cov}_{a\sim\pi_{\rho,\theta}(\cdot|s_{k+1})}\Big[\nabla_\theta Q_{\rho,\theta}(s_{k+1}',a),\nabla_\rho Q_{\rho,\theta}(s_{k+1}',a)\Big]\Big|s_t,a_t\Big].\label{equ:second_order_derivative_2.95}
\end{align}

In summary, by combining \eqref{equ:second_order_derivative_2.5}, \eqref{equ:second_order_derivative_2.6}, \eqref{equ:second_order_derivative_2.7}, \eqref{equ:second_order_derivative_2.8},\eqref{equ:second_order_derivative_2.9},
and \eqref{equ:second_order_derivative_2.95}, we have the following formulas of the second-order gradients of the value function: for any $(\rho,\theta)\in\Theta$, for any $(s_t,a_t)\in\calS\times\calA$,
\begin{align}
\nabla^2_\theta V_{\rho,\theta}(s_t) 
&= \E_{\tau'\sim\pi_{\rho,\theta}}\Big[\sum_{k=t}^\infty \rho^{k-t}\nabla_\theta^2 U_\theta (R(s_{k}',a_{k}')) \Big|s_t\Big]\nonumber\\
&\qquad+ \E_{\tau'\sim\pi_{\rho,\theta}}\Big[\sum_{k=t}^\infty \rho^{k-t} \mathbb{V}_{a\sim\pi_{\rho,\theta}(\cdot|s_{k}')}\Big[\nabla_\theta Q_{\rho,\theta}(s_{k}',a)\Big]\Big|s_t\Big],\label{eq:v_secondorder_1}\\
\nabla^2_\rho V_{\rho,\theta}(s_t) 
&= 2\E_{\tau'\sim\pi_{\rho,\theta}}\Big[ \sum_{k=t}^\infty \rho^{k-t} \nabla_\rho Q_{\rho,\theta}(s_{k+1}',a_{k+1})\Big|s_t\Big] \nonumber\\
&\qquad+\E_{\tau'\sim\pi_{\rho,\theta}}
\Big[\sum_{k=t}^\infty \rho^{k-t}\mathbb{V}_{a\sim\pi_{\rho,\theta}(\cdot|s_{k}')}\Big[\nabla_\rho Q_{\rho,\theta}(s_{k}',a)\Big]\Big|s_t\Big],\label{eq:v_secondorder_2}\\
\nabla_\theta\nabla_\rho V_{\rho,\theta}(s_t) 
&= \E_{\tau'\sim\pi_{\rho,\theta}}\Big[ \sum_{k=t}^\infty \rho^{k-t}\nabla_\theta V_{\rho,\theta}(s_{k+1}')\Big|s_t\Big]\nonumber\\
&\qquad+ \E_{\tau'\sim\pi_{\rho,\theta}}\Big[ \sum_{k=t}^\infty\rho^{k-t}{\rm \bf Cov}_{a\sim\pi_{\rho,\theta}(\cdot|s_{k}')}\Big[\nabla_\theta Q_{\rho,\theta}(s_{k}',a),\nabla_\rho Q_{\rho,\theta}(s_{k}',a)\Big]\Big|s_t\Big].\label{eq:v_secondorder_3}
\end{align}

\noindent \underline{Step 2.} Next, we calculate the derivatives of the log-likelihood function. By straight-forward calculations using \eqref{equ:likelihood_gradient_3} and combining with \eqref{eq:v_secondorder_1}, the second-order derivative of the log-likelihood function to $\theta$ satisfies:
\begin{align*}
\nabla^2_\theta \mathcal{L}(\rho,\theta) & =    \E_{\tau\sim\pi_{\Bar{\rho},\Bar{\theta}}}\Big[\sum_{t=0}^\infty \gamma^t \nabla^2_\theta U_\theta(R(s_t,a_t))\Big] - \E_{s_0\sim\mu(\cdot)}\Big[\nabla^2_\theta V_{\rho,\theta}(s_0)\Big]\nonumber\\
&\qquad+(\rho-\gamma)\E_{\tau\sim\pi_{\Bar{\rho},\Bar{\theta}}}\Big[\sum_{t=1}^\infty \gamma^{t-1} \nabla^2_\theta V_{\rho,\theta}(s_{t})\Big]\\
& =  \E_{\tau\sim\pi_{\Bar{\rho},\Bar{\theta}}}\Big[\sum_{t=0}^\infty \gamma^t \nabla^2_\theta U_\theta(R(s_t,a_t))\Big]
-\E_{\tau\sim\pi_{\rho,\theta}}\Big[\sum_{t=0}^\infty \rho^{t}\nabla_\theta^2 U_\theta (R(s_{t},a_{t})) \Big]\nonumber\\
&\qquad- \E_{\tau\sim\pi_{\rho,\theta}}\Big[\sum_{t=0}^\infty \rho^{t} \mathbb{V}_{a\sim\pi_{\rho,\theta}(\cdot|s_{t})}\Big[\nabla_\theta Q_{\rho,\theta}(s_{t},a)\Big]\Big]\\
&\qquad+(\rho-\gamma)\E_{\tau\sim\pi_{\Bar{\rho},\Bar{\theta}}}\Big[\sum_{t=1}^\infty \gamma^{t-1} \nabla^2_\theta V_{\rho,\theta}(s_{t})\Big].
\end{align*}
Note that when $(\rho,\theta)=(\Bar{\rho},\Bar{\theta})$,
\begin{eqnarray*}
\E_{\tau\sim\pi_{\Bar{\rho},\Bar{\theta}}}\Big[\sum_{t=1}^\infty \gamma^{t-1} \nabla^2_\theta V_{\rho,\theta}(s_{t})\Big]
&=& \E_{\tau\sim\pi_{\Bar{\rho},\Bar{\theta}}}\Big[\sum_{t=1}^\infty \gamma^{t-1} \sum_{k=t}^\infty \rho^{k-t}\nabla_\theta^2 U_\theta (R(s_{k},a_{k})) \Big]\\
 &&+  \E_{\tau\sim\pi_{\Bar{\rho},\Bar{\theta}}}\Big[\sum_{t=1}^\infty \gamma^{t-1}\sum_{k=t}^\infty \rho^{k-t} \mathbb{V}_{a\sim\pi_{\Bar{\rho},\Bar{\theta}}(\cdot|s_{k})}\Big[\nabla_\theta Q_{\rho,\theta}(s_{k},a) \Big]\Big]\\
 &=&  \E_{\tau\sim\pi_{\Bar{\rho},\Bar{\theta}}}\Big[\sum_{k=1}^\infty\frac{\gamma^{k}-\rho^{k}}{\gamma-\rho} 
\nabla_\theta^2 U_\theta (R(s_{k},a_{k})) \Big]\\
 &&+  \E_{\tau\sim\pi_{\Bar{\rho},\Bar{\theta}}}\Big[\sum_{k=1}^\infty\frac{\gamma^{k}-\rho^{k}}{\gamma-\rho} 
 \mathbb{V}_{a\sim\pi_{\Bar{\rho},\Bar{\theta}}(\cdot|s_{k})}\Big[\nabla_\theta Q_{\rho,\theta}(s_{k},a) \Big]\Big],
\end{eqnarray*}
where the first equality holds by \eqref{eq:v_secondorder_1} and the last equality holds by changing the order of summations. Plugging the above result to $\nabla^2_\theta \mathcal{L}(\rho,\theta)$, we obtain that, 
\begin{eqnarray*}
\nabla^2_\theta \mathcal{L}(\rho,\theta) |_{(\rho,\theta) = (\Bar{\rho},\Bar{\theta})}
& =&  \E_{\tau\sim\pi_{\Bar{\rho},\Bar{\theta}}}\Big[\sum_{t=0}^\infty \gamma^t \nabla^2_\theta U_\theta(R(s_t,a_t))\Big]
-\E_{\tau\sim\pi_{\Bar{\rho},\Bar{\theta}}}\Big[\sum_{t=0}^\infty \rho^{t}\nabla_\theta^2 U_\theta (R(s_{t},a_{t})) \Big]\nonumber\\
&\qquad& - \E_{\tau\sim\pi_{\Bar{\rho},\Bar{\theta}}}\Big[\sum_{t=0}^\infty \rho^{t} \mathbb{V}_{a\sim\pi_{\rho,\theta}(\cdot|s_{t})}\Big[\nabla_\theta Q_{\rho,\theta}(s_{t},a)\Big]\Big]\\
&\qquad&+\E_{\tau\sim\pi_{\Bar{\rho},\Bar{\theta}}}\Big[\sum_{t=1}^\infty\rho^{t}
\nabla_\theta^2 U_\theta (R(s_{t},a_{t})) \Big] - \E_{\tau\sim\pi_{\Bar{\rho},\Bar{\theta}}}\Big[\sum_{t=1}^\infty\gamma^{t}
\nabla_\theta^2 U_\theta (R(s_{t},a_{t})) \Big]\\
&\qquad&+\E_{\tau\sim\pi_{\Bar{\rho},\Bar{\theta}}}\Big[\sum_{t=1}^\infty\rho^{t}
\mathbb{V}_{a\sim\pi_{\Bar{\rho},\Bar{\theta}}(\cdot|s_{t})}\Big[\nabla_\theta Q_{\rho,\theta}(s_{t},a) \Big]\Big]\\
&\qquad&- \E_{\tau\sim\pi_{\Bar{\rho},\Bar{\theta}}}\Big[\sum_{t=1}^\infty\gamma^{t}
\mathbb{V}_{a\sim\pi_{\Bar{\rho},\Bar{\theta}}(\cdot|s_{t})}\Big[\nabla_\theta Q_{\rho,\theta}(s_{t},a) \Big] \Big]\\
&=& - \E_{\tau\sim\pi_{\Bar{\rho},\Bar{\theta}}}\Big[\sum_{{t=0}}^\infty\gamma^{t}
\mathbb{V}_{a\sim\pi_{\Bar{\rho},\Bar{\theta}}(\cdot|s_{t})}\Big[\nabla_\theta Q_{\rho,\theta}(s_{t},a) \Big] \Big].
\end{eqnarray*}
Similarly, using the result in \eqref{eq:v_secondorder_2} and \eqref{equ:likelihood_gradient_5},
\begin{eqnarray}
\nabla^2_\rho \mathcal{L}(\rho,\theta)  
&=& -2\E_{\tau\sim\pi_{\rho,\theta}}\Big[ \sum_{t=0}^\infty \rho^{t} \nabla_\rho Q_{\rho,\theta}(s_{t+1},a_{t+1})\Big] -\E_{\tau\sim\pi_{\rho,\theta}}
\Big[\sum_{t=0}^\infty \rho^{t}\mathbb{V}_{a\sim\pi_{\rho,\theta}(\cdot|s_{t})}\Big[\nabla_\rho Q_{\rho,\theta}(s_{t},a)\Big]\Big]\nonumber\\
&&+2 \E_{\tau\sim\pi_{\Bar{\rho},\Bar{\theta}}}\Big[\sum_{t=1}^\infty \gamma^{t-1} \nabla_\rho V_{\rho,\theta}(s_t)\Big]\nonumber\\
&& +(\rho-\gamma)\E_{\tau\sim\pi_{\Bar{\rho},\Bar{\theta}}}\Big[\sum_{t=1}^\infty \gamma^{t-1} \nabla^2_\rho V_{\rho,\theta}(s_t)\Big].\label{equ:likelihood_hessian3}
\end{eqnarray}

Note that when $(\rho,\theta) = (\Bar{\rho},\Bar{\theta})$,
\begin{eqnarray*}
\E_{\tau\sim\pi_{\Bar{\rho},\Bar{\theta}}}\Big[\sum_{t=1}^\infty \gamma^{t-1} \nabla^2_\rho V_{\rho,\theta}(s_t)\Big]
&=&2 \E_{\tau\sim\pi_{\Bar{\rho},\Bar{\theta}}}\Big[\sum_{t=1}^\infty \gamma^{t-1}\sum_{k=t}^\infty \rho^{k-t} \nabla_\rho Q_{\rho,\theta}(s_{k+1},a_{k+1})\Big]\\
&& + \E_{\tau\sim\pi_{\Bar{\rho},\Bar{\theta}}}\Big[\sum_{t=1}^\infty \gamma^{t-1} \sum_{k=t}^\infty \rho^{k-t} \mathbb{V}_{a\sim\pi_{\rho,\theta}(\cdot|s_{k})}\Big[\nabla_\rho Q_{\rho,\theta}(s_{k},a)\Big]\Big]\\
&=&2 \E_{\tau\sim\pi_{\Bar{\rho},\Bar{\theta}}}\Big[\sum_{k=1}^\infty\frac{\gamma^{k}-\rho^{k}}{\gamma-\rho} \nabla_\rho Q_{\rho,\theta}(s_{k+1},a_{k+1})\Big]\\
&& + \E_{\tau\sim\pi_{\Bar{\rho},\Bar{\theta}}}\Big[\sum_{k=1}^\infty\frac{\gamma^{k}-\rho^{k}}{\gamma-\rho} \mathbb{V}_{a\sim\pi_{\rho,\theta}(\cdot|s_{k})}\Big[\nabla_\rho Q_{\rho,\theta}(s_{k},a)\Big]\Big],
\end{eqnarray*}
where the last equality holds by changing the order of summations.
Plugging the above result to $\nabla^2_\rho \mathcal{L}(\rho,\theta)$, we have 

\begin{eqnarray*}
&\nabla^2_\rho& \mathcal{L}(\rho,\theta)  |_{(\rho,\theta) = (\Bar{\rho},\Bar{\theta})}\\
&=& -2\E_{\tau\sim\pi_{\rho,\theta}}\Big[ \sum_{t=0}^\infty \rho^{t} \nabla_\rho Q_{\rho,\theta}(s_{t+1},a_{t+1})\Big] -\E_{\tau\sim\pi_{\rho,\theta}}
\Big[\sum_{t=0}^\infty \rho^{t}\mathbb{V}_{a\sim\pi_{\rho,\theta}(\cdot|s_{t})}\Big[\nabla_\rho Q_{\rho,\theta}(s_{t},a)\Big]\Big]\nonumber\\
&&+2 \E_{\tau\sim\pi_{\Bar{\rho},\Bar{\theta}}}\Big[\sum_{t=1}^\infty \gamma^{t-1} \nabla_\rho V_{\rho,\theta}(s_t)\Big]\nonumber\\
&& + 2 \E_{\tau\sim\pi_{\Bar{\rho},\Bar{\theta}}}\Big[\sum_{t=1}^\infty \rho^{t} \nabla_\rho Q_{\rho,\theta}(s_{t+1},a_{t+1})\Big]-2 \E_{\tau\sim\pi_{\Bar{\rho},\Bar{\theta}}}\Big[\sum_{t=1}^\infty \gamma^{t} \nabla_\rho Q_{\rho,\theta}(s_{t+1},a_{t+1})\Big] \nonumber\\
&& + \E_{\tau\sim\pi_{\Bar{\rho},\Bar{\theta}}}\Big[\sum_{t=1}^\infty \rho^t \mathbb{V}_{a\sim\pi_{\Bar{\rho},\Bar{\theta}}(\cdot|s_{t})}\Big[\nabla_\rho Q_{\rho,\theta}(s_{t},a)\Big]\Big] -  \E_{\tau\sim\pi_{\Bar{\rho},\Bar{\theta}}}\Big[\sum_{t=1}^\infty \gamma^t \mathbb{V}_{a\sim\pi_{\Bar{\rho},\Bar{\theta}}(\cdot|s_{t})}\Big[\nabla_\rho Q_{\rho,\theta}(s_{t},a)\Big]\Big] \\
&=& {2\, \E_{\tau\sim\pi_{\Bar{\rho},\Bar{\theta}}}\Big[\sum_{t=1}^\infty \gamma^{t-1} \nabla_\rho V_{\rho,\theta}(s_t)\Big] - 2\, \E_{\tau\sim\pi_{\Bar{\rho},\Bar{\theta}}}\Big[\sum_{t=1}^\infty \gamma^{t} \nabla_\rho Q_{\rho,\theta}(s_{t+1},a_{t+1})\Big]}
\\
&&-  \E_{\tau\sim\pi_{\Bar{\rho},\Bar{\theta}}}\Big[\sum_{t=1}^\infty \gamma^t \mathbb{V}_{a\sim\pi_{\Bar{\rho},\Bar{\theta}}(\cdot|s_{t})}\Big[\nabla_\rho Q_{\rho,\theta}(s_{t},a)\Big]\Big] \\
&&{-2 \E_{\tau\sim\pi_{\Bar{\rho},\Bar{\theta}}}\Big[\nabla_\rho Q_{\rho,\theta}(s_1,a_1)\Big] 
-\E_{\tau\sim\pi_{\Bar{\rho},\Bar{\theta}}}\Big[\mathbb{V}_{a\sim\pi_{\rho,\theta}(\cdot|s_{0})}\Big[\nabla_\rho Q_{\rho,\theta}(s_{0},a)\Big]\Big]}\\
&=& {-  \E_{\tau\sim\pi_{\Bar{\rho},\Bar{\theta}}}\Big[\sum_{t=0}^\infty \gamma^t \mathbb{V}_{a\sim\pi_{\Bar{\rho},\Bar{\theta}}(\cdot|s_{t})}\Big[\nabla_\rho Q_{\rho,\theta}(s_{t},a)\Big]\Big],}
\end{eqnarray*}
where the last equality holds by the expressions of $\nabla_\rho V_{\rho,\theta}$ and $\nabla_\rho Q_{\rho,\theta}$ in \eqref{equ:likelihood_gradient_3.5} and \eqref{equ:likelihood_gradient_3.6}.

Similarly,
\begin{eqnarray}
\nabla_\theta\nabla_\rho \mathcal{L}(\rho,\theta)& =&    - \E_{s_0\sim\mu(\cdot)}\Big[\nabla_\theta \nabla_\rho \, V_{\rho,\theta}(s_0)\Big]\nonumber+\E_{\tau\sim\pi_{\Bar{\rho},\Bar{\theta}}}\Big[\sum_{t=1}^\infty \gamma^{t-1} \nabla_\theta V_{\rho,\theta}(s_{t})\Big]\\
&&\qquad +(\rho-\gamma)\E_{\tau\sim\pi_{\Bar{\rho},\Bar{\theta}}}\Big[\sum_{t=1}^\infty \gamma^{t-1} \nabla_\theta\nabla_\rho V_{\rho,\theta}(s_{t})\Big].\label{eq:second_cross}
\end{eqnarray}
Note that when $(\rho,\theta)=(\Bar{\rho},\Bar{\theta})$,
\begin{eqnarray*} &&\E_{\tau\sim\pi_{\Bar{\rho},\Bar{\theta}}}\Big[\sum_{t=1}^\infty \gamma^{t-1} \nabla_\theta\nabla_\rho V_{\rho,\theta}(s_{t})\Big]\\
&=&\E_{\tau\sim\pi_{\Bar{\rho},\Bar{\theta}}}\Big[\sum_{t=1}^\infty \gamma^{t-1}\sum_{k=t}^\infty \rho^{k-t}\nabla_\theta V_{\rho,\theta}(s_{k+1})\Big] \\
&&+ \E_{\tau\sim\pi_{\Bar{\rho},\Bar{\theta}}}\Big[\sum_{t=1}^\infty \gamma^{t-1}\sum_{k=t}^\infty \rho^{k-t}{\rm \bf Cov}_{a\sim\pi_{\Bar{\rho},\Bar{\theta}}(\cdot|s_{k})}\Big[\nabla_\theta Q_{\rho,\theta}(s_{k},a),\nabla_\rho Q_{\rho,\theta}(s_{k},a)\Big]\Big]\\
&=&\E_{\tau\sim\pi_{\Bar{\rho},\Bar{\theta}}}\Big[\sum_{k=1}^\infty\frac{\gamma^{k}-\rho^{k}}{\gamma-\rho}\nabla_\theta V_{\rho,\theta}(s_{k+1})\Big] \\
&&+ \E_{\tau\sim\pi_{\Bar{\rho},\Bar{\theta}}}\Big[\sum_{k=1}^\infty\frac{\gamma^{k}-\rho^{k}}{\gamma-\rho}{\rm \bf Cov}_{a\sim\pi_{\Bar{\rho},\Bar{\theta}}(\cdot|s_{k})}\Big[\nabla_\theta Q_{\rho,\theta}(s_{k},a),\nabla_\rho Q_{\rho,\theta}(s_{k},a)\Big]\Big].
\end{eqnarray*}
The last equality holds by changing the order of summations. Plugging the last equality back to \eqref{eq:second_cross} and applying \eqref{eq:v_secondorder_3} for $t=0$, we have 
\begin{eqnarray}
&\nabla_\theta\nabla_\rho& \mathcal{L}(\rho,\theta)|_{(\rho,\theta) = (\Bar{\rho},\Bar{\theta})}\nonumber\\ 
&=&   - \E_{\tau\sim\pi_{\Bar{\rho},\Bar{\theta}}}\Big[\sum_{t=0}^\infty \rho^{t}\nabla_\theta V_{\rho,\theta}(s_{t+1})\Big]
-\E_{\tau\sim\pi_{\Bar{\rho},\Bar{\theta}}}\Big[\sum_{t=0}^\infty\rho^{t}{\rm \bf Cov}_{a\sim\pi_{\rho,\theta}(\cdot|s_{t})}\Big[\nabla_\theta Q_{\rho,\theta}(s_{t},a),\nabla_\rho Q_{\rho,\theta}(s_{t},a)\Big]\Big]\nonumber\\
&&+\E_{\tau\sim\pi_{\Bar{\rho},\Bar{\theta}}}\Big[\sum_{t=1}^\infty \gamma^{t-1} \nabla_\theta V_{\rho,\theta}(s_{t})\Big]\nonumber\\
&&+\E_{\tau\sim\pi_{\Bar{\rho},\Bar{\theta}}}\Big[\sum_{t=1}^\infty\rho^t\nabla_\theta V_{\rho,\theta}(s_{t+1})\Big]-\E_{\tau\sim\pi_{\Bar{\rho},\Bar{\theta}}}\Big[\sum_{t=1}^\infty\gamma^t\nabla_\theta V_{\rho,\theta}(s_{t+1})\Big]\nonumber \\
&&+ \E_{\tau\sim\pi_{\Bar{\rho},\Bar{\theta}}}\Big[\sum_{t=1}^\infty\rho^t{\rm \bf Cov}_{a\sim\pi_{\Bar{\rho},\Bar{\theta}}(\cdot|s_{t})}\Big[\nabla_\theta Q_{\rho,\theta}(s_{t},a),\nabla_\rho Q_{\rho,\theta}(s_{t},a)\Big]\Big]\nonumber\\
&& -\E_{\tau\sim\pi_{\Bar{\rho},\Bar{\theta}}}\Big[\sum_{t=1}^\infty\gamma^t{\rm \bf Cov}_{a\sim\pi_{\Bar{\rho},\Bar{\theta}}(\cdot|s_{t})}\Big[\nabla_\theta Q_{\rho,\theta}(s_{t},a),\nabla_\rho Q_{\rho,\theta}(s_{t},a)\Big]\Big]\nonumber\\
&=& -\E_{\tau\sim\pi_{\Bar{\rho},\Bar{\theta}}}\Big[\sum_{t=1}^\infty\gamma^t{\rm \bf Cov}_{a\sim\pi_{\Bar{\rho},\Bar{\theta}}(\cdot|s_{t})}\Big[\nabla_\theta Q_{\rho,\theta}(s_{t},a),\nabla_\rho Q_{\rho,\theta}(s_{t},a)\Big]\Big] \nonumber\\
&&{-\E_{\tau\sim\pi_{\Bar{\rho},\Bar{\theta}}}\Big[{\rm \bf Cov}_{a\sim\pi_{\rho,\theta}(\cdot|s_{0})}\Big[\nabla_\theta Q_{\rho,\theta}(s_{0},a),\nabla_\rho Q_{\rho,\theta}(s_{0},a)\Big]\Big]}\label{equ:landscape_1}\\
&=& {-\E_{\tau\sim\pi_{\Bar{\rho},\Bar{\theta}}}\Big[\sum_{t=0}^\infty\gamma^t{\rm \bf Cov}_{a\sim\pi_{\Bar{\rho},\Bar{\theta}}(\cdot|s_{t})}\Big[\nabla_\theta Q_{\rho,\theta}(s_{t},a),\nabla_\rho Q_{\rho,\theta}(s_{t},a)\Big]\Big]},\nonumber
\end{eqnarray}
where \eqref{equ:landscape_1} holds by the fact that
\begin{align*}
&\E_{\tau\sim\pi_{\Bar{\rho},\Bar{\theta}}}\Big[\sum_{t=1}^\infty \gamma^{t-1} \nabla_\theta V_{\rho,\theta}(s_{t})\Big]
-\E_{\tau\sim\pi_{\Bar{\rho},\Bar{\theta}}}\Big[\sum_{t=1}^\infty\gamma^t\nabla_\theta V_{\rho,\theta}(s_{t+1})\Big]
=\E_{\tau\sim\pi_{\Bar{\rho},\Bar{\theta}}}\Big[  \nabla_\theta V_{\rho,\theta}(s_{1})\Big].
\end{align*}
To summarize,
\begin{eqnarray*}
\mathcal H&&:=
\begin{pmatrix}
    \nabla^2_\theta \mathcal{L}(\rho,\theta) &  \nabla_\theta \nabla_\rho \mathcal{L}(\rho,\theta)\\
    \nabla_\theta \nabla_\rho \mathcal{L}(\rho,\theta)^\top &  \nabla^2_\rho \mathcal{L}(\rho,\theta)
\end{pmatrix}{\Bigg|_{(\rho,\theta) = (\Bar{\rho},\Bar{\theta})} }\\
&&=-\E_{\tau\sim\pi_{\Bar{\rho},\Bar{\theta}}}\Big[\sum_{t=0}^\infty\gamma^t
{\mathbb{V}}_{a\sim\pi_{\Bar{\rho},\Bar{\theta}}(\cdot|s_{t})}\Big[
\begin{pmatrix}
\nabla_\theta Q_{\rho,\theta}(s_{t},a)\\
\nabla_\rho Q_{\rho,\theta}(s_{t},a)
\end{pmatrix}\Big]\Big].
\end{eqnarray*}
Therefore, $\mathcal H$ is negative-semi definite by the definition of the covariance notation ${\mathbb{V}}$ in \eqref{equ:definition_of_variance}.

\end{proof}

\subsection{Algorithm Design and Implementation}
Motivated by the landscape analysis in Section \ref{sec:mle}, we design an algorithm that iteratively updates $\rho$ and $\theta$ to maximize the likelihood function; see Algorithm \ref{alg:ML}. 
At each iteration $k$, the value function $V_{\rho^k,\theta^k}$ is first computed by the soft Q iteration (see e.g. \cite{reddy2019sqil})  in lines \ref{line:Qsoft}-\ref{line:Vsoft}, and the parameters $\rho^k,\theta^k$ are then updated in line \ref{line:gradient2} using the gradient computed in line \ref{line:gradient}.

\begin{algorithm}[H]
\caption{Maximum likelihood update}
\label{alg:ML}
\begin{algorithmic}[1]
\State Initialize $\rho^0,\theta^0$.
\For{$k=1,2,\cdots, K$}
\State\label{line:Qsoft} Set $Q_{\rho^k,\theta^k}^{0}(s,a)=\frac{1}{1-\rho^k} $ for all $(s,a)\in\calS\times\calA$,
and $V_{\rho^k,\theta^k}^{0}(s)=\frac{1}{1-\rho^k} $ for all $s\in\calS$.
\For{$i=1,2,\cdots, I$}
\For{$(s,a)\in\calS\times\calA$}
\begin{align*}
Q_{\rho^k,\theta^k}^{i}(s,a) = U_{\theta^k}(R(s,a)) +\rho^k\, \E_{s'\sim P(\cdot|s,a)}\Big[V_{\rho^k,\theta^k}^{i}(s')\Big].
\end{align*}
\EndFor
\State\label{line:Vsoft} Compute $V_{\rho^k,\theta^k}^{I}$ using the soft Bellman equation \eqref{equ:soft_Bellman_equation}.
\EndFor
\State {With the value of $V^{I}_{\rho^k,\theta^k}$, compute $\nabla \mathcal{L}(\rho^k,\theta^k)$ using \eqref{equ:likelihood_gradient_6} and \eqref{equ:likelihood_gradient_7}.}
\label{line:gradient}
\State \label{line:gradient2} Update $(\rho^{k+1},\theta^{k+1}) = (\rho^{k},\theta^{k}) + \zeta^k \nabla \mathcal{L}(\rho^k,\theta^k)$.
\EndFor
\end{algorithmic}
\end{algorithm}

\paragraph{Numerical example one: Merton's problem}

We implement the discrete-time version of Merton's problem introduced in Section \ref{subsubsec:gen-discnt-inf}. The price of the bond follows $S_{t+1}^0=S_t^0+r\,\Delta$ and the price of the stock follows $S_{t+1}-S_t = S_t(\nu\Delta+\sigma \sqrt{\Delta} B_t)$, where $B_t$ are iid sampled from $\mathcal{N}(0,1)$.
Denote $(\alpha_t,c_t)\in \mathcal{A}:=[0,1]\times[0,2]$ as the pair of the consumption-allocation policy at time $t$, then the wealth process follows:
\begin{eqnarray}
X_{t+1}-X_t = \Big[ X_t (\alpha_t \nu + (1-
    \alpha_t)r)-c_t\Big] \Delta + X_t\alpha_t \sigma \sqrt{\Delta}B_t.
\end{eqnarray}
The client provides a time-homogeneous policy $\pi_{\Bar{\rho},\Bar{\theta}}\in \mathcal{P}(\mathcal{A})$ to the inference agent, which solves
\begin{eqnarray}
 \sup_{\pi}   \mathbb{E}\Big[\sum_{t=1}^\infty (\Bar{\rho})^t\Big( 1-\exp(-\Bar{\theta}\,c_t)\Big)+\mathcal{H}\big(\pi(\cdot|X_t)\big)\Big)\Big]
\end{eqnarray}
with $c_t = c(X_t)$ and $\alpha_t = \alpha(X_t)$. Here both $\Bar{\rho}$ and $\Bar{\theta}$ are unknown.

\begin{figure}[H]
    \centering
    \includegraphics[width=0.8\textwidth]{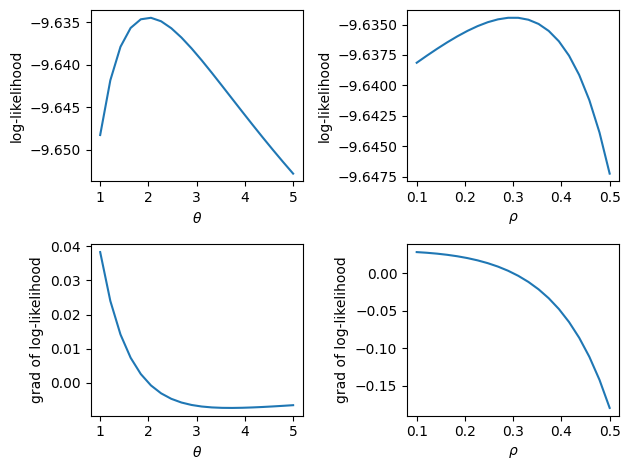}
\caption{Visualization of the log-likelihood function and its gradients ({\bf Left columns}: visualization with respect to $\theta$ (under $\rho = \Bar{\rho}$). {\bf Right columns}: visualization with respect to $\rho$ (under $\theta = \Bar{\theta}$)).}
        \label{fig:mertons_likelihood}
\end{figure}

In the experiment, we set 
$\Bar{\rho} = 0.3$, $\Bar{\theta} = 2$, $\gamma = 0.6$, $r=1.05$, $\Delta=1$, $\nu=1.06$, and $\sigma=0.05$.  We discretize and truncate the state space of the wealth process as $\calS = \{0.13,  0.39,  \dots,  2.23,  2.5\}$, with evenly distanced values such that $|\calS| = 10$. In addition, we discretize the joint space of the allocation and consumption processes as $\calA = \{0.1, 0.11, \dots, 0.98, 1\}\times\{0    , 0.22,\dots , 1.77, 2\}$, with evenly distanced values such that $|\calA|=50$.

We visualize the log-likelihood function and its gradient in Figure \ref{fig:mertons_likelihood}. One can see that the likelihood function is locally concave in $\theta$ and $\rho$  around $(\Bar{\theta},\Bar{\rho})$ in a sufficiently large area, enabling us to find the true parameters by Algorithm \ref{alg:ML} under fast convergence rate.

When implementing Algorithm \ref{alg:ML}, we initialize the parameters randomly with $\theta^0$ sampled uniformly from $[0,1]$ and $\rho^0$ sampled uniformly from $[0.1,0.2]$. We set the learning rate as $\zeta^k = \frac{1000}{k}$ and the total steps of the soft Q update as  $I=100$. As shown in Figure \ref{fig:mertons_convergence}, both $\theta$ and $\rho$ converge to the ground-truth value within 100 iterations.

Additionally, we analyze the behaviors of the client under different $\Bar{\rho}$ values. Figure \ref{fig:mertons_heatmap_comparison} suggests that the client opts for an overall higher consumption when $\Bar{\rho} = 0.1$ and an overall lower consumption when $\Bar{\rho} = 0.75$, indicating a bigger emphasis on deferred outcomes for the latter case.

\begin{figure}[H]
    \centering
    \includegraphics[width=0.5\textwidth]{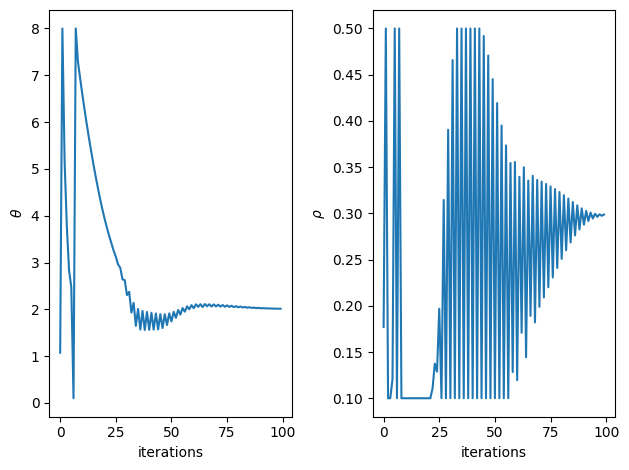}
    \caption{The convergence result of Algorithm \ref{alg:ML}. The left plot shows the value of $\theta$ at each iteration, while the right plot displays the values for $\rho$.}
    \label{fig:mertons_convergence}
\end{figure}

\begin{figure}[H]
    \centering
    \includegraphics[width=0.8\textwidth]{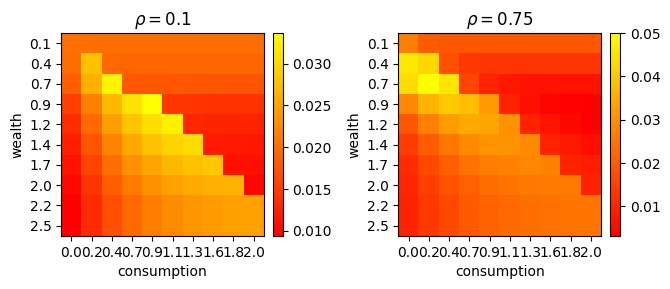}
    \caption{Visualization of the client's consumption policy. The left plot illustrates consumption at various wealth levels under $\Bar{\rho} = 0.1$, while the right plot corresponds to $\Bar{\rho} = 0.75$.}
    \label{fig:mertons_heatmap_comparison}
\end{figure}

\paragraph{Numerical example two: Investment under unhedgeable risk.} 
We consider a more complex investment problem, where the price of the primitive asset is modeled as a diffusion process whose coefficients evolve according to a correlated diffusive factor \cite{zariphopoulou2001solution}. The price of the bond follows the same dynamics as in Example One: $$S_{t+1}^0=S_t^0+r\,\Delta.$$ On the other hand, the price of the stock follows $$S_{t+1}-S_t = S_t(\nu(Y_t,t)\Delta+\sigma(Y_t,t) \sqrt{\Delta} B^1_t),$$ 
with $Y_t$ the ``stochastic factor model'' and it is assumed to satisfy $$Y_{t+1}-Y_t = b(Y_t,t)\Delta + d(Y_t,t)\sqrt{\Delta} B^1_t.$$ Here $B^1_t$  and  $B^2_t$ are iid sampled from $\mathcal{N}(0,1)$. We assume the correlation between $B_t^1$ and $B_t^2$ is $\eta\in(0,1)$.

Consider a problem with only investment and no consumption. Then the wealth process follows:
\begin{eqnarray}
    X_{t+1}-X_t = \Big[ X_t (\alpha_t \,\nu(t,Y_t) + (1-
    \alpha_t)r)\Big] \Delta + X_t\alpha_t \sigma (t,Y_t)\,\sqrt{\Delta}B^1_t,
\end{eqnarray}
under the investment strategy $\alpha_t\in \mathcal{A}=[0,1]$. The client provides a time-homogeneous policy $\pi_{\Bar{\rho},\Bar{\theta}}(x,y)\in \mathcal{P}(\mathcal{A})$ to the inference agent, which solves
\begin{eqnarray}
\label{equ:unhegeable}
 \sup_{\pi}   \mathbb{E}\Big[\sum_{t=1}^\infty (\Bar{\rho})^t \frac{1}{\Bar{\theta}_1 \Bar{\theta}_2}(X_t)^{\Bar{\theta}_1} (Y_t)^{\Bar{\theta}_2} + \mathcal{H}\Big(\pi(\cdot|(X_t,Y_t))\Big)\Big]
\end{eqnarray}
for some  $\Bar{\rho}>0$ and $\Bar{\theta}_1,\Bar{\theta}_2\in(0,1)$ that  are unknown to the inference agent.

\begin{figure}[H]
    \centering
    \includegraphics[width=0.8\textwidth]{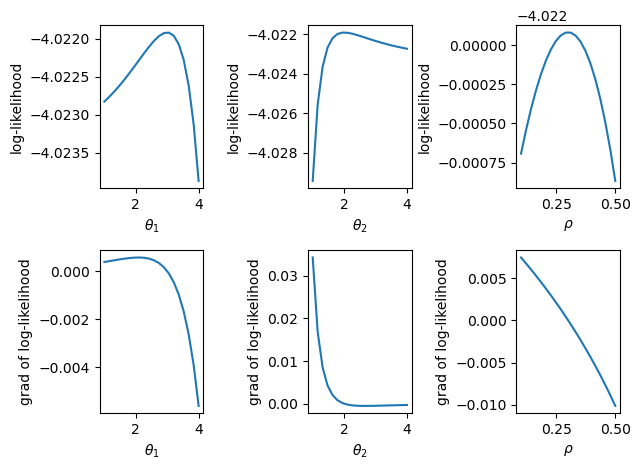}
\caption{Visualization of the log-likelihood function and its gradients ({\bf Left columns}: visualization with respect to $\theta_1$ (under $\theta_2 = \Bar{\theta}_2$ and $\rho = \Bar{\rho}$).
{\bf Middle columns}: visualization with respect to $\theta_2$ (under $\theta_1 = \Bar{\theta}_1$ and $\rho = \Bar{\rho}$).
{\bf Right columns}: visualization with respect to $\rho$ (under $\theta_1 = \Bar{\theta}_1$ and $\theta_2 = \Bar{\theta}_2$).)}
\label{fig:unhegeable_risk_likelihood}
\end{figure} 

In the experiment, we discretize and truncate the state space for the wealth process and the stochastic factor model as $\calS=\{0.1, 0.7, 1.3, 1.9, 2.5\}\times \{0.1 , 0.32, 0.55 , 0.77, 1\}$,  with evenly distanced values such that $|\calS|=25$. In addition, we discretize the action space of the allocation process as $\calA= \{0.1  , 0.32, 0.55 , 0.77, 1 \}$, with evenly distanced values such that  $|\calA|=5$.
We set $r = 1.05$, $\Delta = 1$, 
$\Bar{\theta_1} = 3$, $\Bar{\theta}_2 = 2$, $\Bar{\rho} = 0.3$, and $\gamma = 0.6$. For the drift and diffusion terms, we set $b(y,t) = -0.6y+0.2$, 
$d(y,t)=0.3 y + 0.3$, $\nu(t,y)=y$, and $\sigma(t,y)=0.5y+0.3$.

As shown in Figure \ref{fig:unhegeable_risk_likelihood}, 
we visualize the log-likelihood function and its gradient. One can see that the likelihood function is locally concave in $\theta$ and $\rho$ in an area around $(\Bar{\theta},\Bar{\rho})$.

When implementing Algorithm \ref{alg:ML}, we initialize the parameters randomly with $\theta_1^0$, $\theta_2^0$ sampled uniformly from $[1,2]$ and $\rho^0$ sampled uniformly from $[0.1,0.2]$. We set the learning rate as $\zeta^k = \frac{1000}{\sqrt{k}}$ and the total steps of the soft Q update as $I=100$. As shown in Figure \ref{fig:unhegeable_risk_convergence}, both $\theta_1,\theta_2$ and $\rho$ converge to the ground-truth values within 1500 iterations.

\begin{figure}[H]
    \centering
    \includegraphics[width=0.6\textwidth]{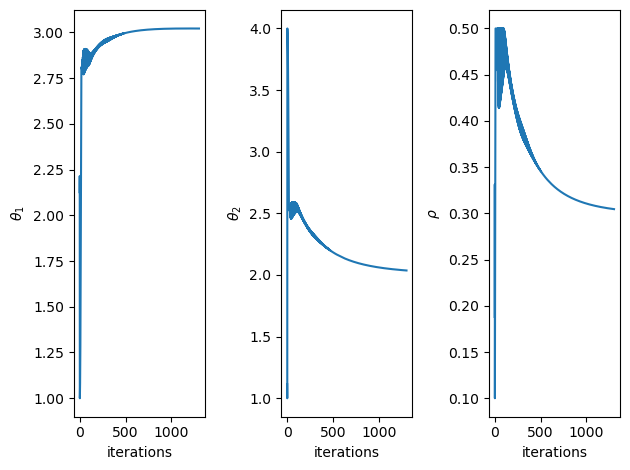}
    \caption{The convergence result of Algorithm \ref{alg:ML}. The left plot shows the value of $\theta_1$ at each iteration, the middle plot is for $\theta_2$, and the right plot is for $\rho$.}
    \label{fig:unhegeable_risk_convergence}
\end{figure}

Furthermore, Figure \ref{fig:unhegeable_risk_heatmap} illustrates the client's investment allocation policy $\alpha$ across various wealth levels (under fixed factor  value $1$), considering $\Bar{\rho} = 0.1$ and $\Bar{\rho} = 0.75$. The influence of $\Bar{\rho}$ on the investment decisions in this example is less pronounced compared to Merton's problem. This difference arises because, in Merton's problem, the client confronts a trade-off between higher consumption for instantaneous rewards and lower consumption for better future rewards. Conversely, the client addressing \eqref{equ:unhegeable} strives for a higher $X_t$ regardless of her $\Bar{\rho}$. Our algorithm consistently finds the optimal parameters, although the convergence speed here is slower compared to that for Merton's problem due to the above-mentioned reasons.

\begin{figure}[H]
    \centering
    \includegraphics[width=0.8\textwidth]{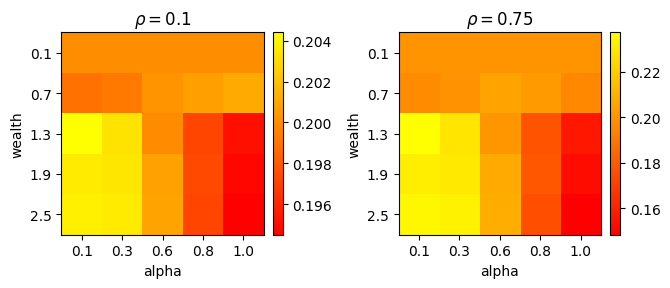}
    \caption{Visualization of the client's allocation policy (under fixed factor value $1$). The left plot illustrates her allocation at various wealth levels with $\Bar{\rho} = 0.1$, while the right plot is for $\Bar{\rho} = 0.75$.}
\label{fig:unhegeable_risk_heatmap}
\end{figure}

\bibliography{references}  

\appendix

\end{document}